\begin{document}

\begin{frontmatter}



\newtheorem{theorem}{Theorem}[section]
\newtheorem{corollary}{Corollary}
\newtheorem*{main}{Main Theorem}
\newtheorem{lemma}[theorem]{Lemma}
\newtheorem{proposition}{Proposition}
\newtheorem{conjecture}{Conjecture}
\newtheorem*{problem}{Problem}
\theoremstyle{definition}
\newtheorem{definition}{Definition}
\newtheorem{remark}{Remark}
\newtheorem*{notation}{Notation}
\newcommand{\ep}{\varepsilon}
\newcommand{\eps}[1]{{#1}_{\varepsilon}}
\renewcommand{\theequation}{\thesection.\arabic{equation}}
\numberwithin{equation}{section}

\title{Uniqueness and stability of traveling waves to the time-like extremal hypersurface in Minkowski space\tnoteref{label1}}
 \tnotetext[label1]{This work was supported by Key Laboratory of Mathematics for Nonlinear Sciences (Fudan University), Ministry of Education of China, P.R.China.
Shanghai Key Laboratory for Contemporary Applied Mathematics, School of Mathematical Sciences, Fudan University, P.R. China, NSFC (grants No. 11401367, grants No. 11421061, grants No.11726611, grants No. 11726612), 973 program (grant No. 2013CB834100) and 111 project.}

\author[math]{Jianli Liu}
\ead{jlliu@shu.edu.cn}
\address[math]{Department of Mathematics, Shanghai University, Shanghai 200444, China}

\author[rvt]{Yi Zhou\corref{cor1}}
\ead{yizhou@fudan.edu.cn}\cortext[cor1]{Corresponding author}
\address[rvt]{School of Mathematical Sciences, Fudan University, Shanghai 200433, China}

%

\begin{abstract}
There is a few results about the global stability of nontrivial solutions to quasilinear wave equations. In this paper we are concerned with the uniqueness and stability of traveling waves to the time-like extremal hypersurface in Minkowski space. Firstly, we can get the existence and uniqueness of traveling wave solutions to the time-like extremal hypersurface in $\mathbb{R}^{1+(n+1)}$, which can be considered as the generalized Bernstein theorem in Minkowski space. Furthermore, we also get the stability of traveling wave solutions with speed of light to time-like extremal hypersurface in $1+(2+1)$ dimensional Minkowski space.
\end{abstract}
\begin{keyword}
 Quasilinear wave equations; Time-like extremal surface; Stability; Traveling wave solutions.
\end{keyword}


\end{frontmatter}


\section{Introduction and main results}
The extremal surface in Minkowski space is the $C^2$ surface with vanishing mean curvature. The time-like extremal surface is an interesting model which may be viewed as simple but nontrivial examples of membrane in field theory. The equation to time-like extremal hypersurface in $1+(n+1)$ dimensional Minkowski space are as follows
\begin{equation}\label{1.1}
\left(\frac{v_{t}}{\sqrt{1+|\nabla v|^{2}- v_{t}^{2}}}\right)_{t}-\nabla\cdot\left(\frac{\nabla v}{\sqrt{1+|\nabla v|^{2}-v_{t}^{2}}}\right)=0.
\end{equation}
where $\Delta\doteq 1+|\nabla v|^{2}- v_{t}^{2} > 0, v(t,x)$ is the scalar function, $t$ is the time variable and $x = (x_1,\cdots, x_n)$ is the space variable.

In this paper, we will give the uniqueness and stability of the traveling wave solution to the time-like extremal hypersurface in Minkowski space. There are two main parts. Firstly, we will give the existence and uniqueness of traveling wave solution to time-like extremal surface in Minkowski space, which is correspondent to the famous Bernstein theorem of minimal surface in $\mathbb{R}^n$. The classical Bernstein Theorem is solved by Bernstein in three dimensional Riemanian manifold \cite{Bernstein}. It was proved in dimensions up to 8 by \cite{Moser}, \cite{Fleming},\cite{De Giorgi}, \cite{Almgren}, \cite{Simons}, \cite{Bombieri}. For the space-like maximal surface in a $n$-dimensional Lorentzian manifold, there is the similar Calabi-Bernstein theorem, which was first proved by Calabi in \cite{Calabi}, and extended to the general $n$-dimensional case by Cheng and Yau \cite{Cheng and Yau}. We can also refer to  \cite{Kobayashi}, \cite{Ncnertey}, \cite{Estudillo and Romero 1, Estudillo and Romero 2, Estudillo and Romero 3}, \cite{Romero}, \cite{Alias and Palmer}.
Now we will consider the Bernstein type theorem of the system (\ref{1.1}) and find out the representation of traveling wave solution.  We assume that there exists
a traveling wave solution of the form $f(x - \vec{c}t)$, where $f$ is scalar funtion.
Without loss of generality, let the generalized velocity is $\vec{c}=(0,\cdots,0,c)$,
 $c\neq0$. Then, $\Delta = 1 + \sum\limits_{i=1}^{n-1}|\partial_{i}f|^{2}+(1-c^{2})|\partial_{n}f|^{2}$. Therefore, system (\ref{1.1}) can be rewritten as
\begin{equation}\label{1.2}
c^{2}\partial_{n}\left(\frac{\partial_{n}f}{\sqrt{\Delta}}\right)-\partial_{1}\left(\frac{\partial_{1}f}{\sqrt{\Delta}}\right)-\cdots-\partial_{n-1}\left(\frac{\partial_{n-1}f}{\sqrt{\Delta}}\right)-\partial_{n}\left(\frac{\partial_{n}f}{\sqrt{\Delta}}\right)=0.
\end{equation}
Thus
\begin{equation}\label{1.3}
\partial_{1}\left(\frac{\partial_{1}f}{\sqrt{\Delta}}\right)+\cdots+\partial_{n-1}\left(\frac{\partial_{n-1}f}{\sqrt{\Delta}}\right)+(1-c^{2})\partial_{n}(\frac{\partial_{n}f}{\sqrt{\Delta}})=0.
\end{equation}

When $|c| < 1$, let $x'_n = \frac{1}{\sqrt{1-c^2}}(x_n - ct)$,  the above system (\ref{1.3}) can be rewritten as
\begin{equation}\label{1.4}
\partial_{1}\left(\frac{\partial_{1}f}{\sqrt{\Delta}}\right)+\cdots+\partial_{n}\left(\frac{\partial_{n-1}f}{\sqrt{\Delta}}\right)+ \tilde{\partial}_{n}(\frac{\tilde{\partial}_{n}f}{\sqrt{\Delta}})=0.
\end{equation}
where $\tilde{\partial}_{n} = \partial_{x'_n}$. Then, system (\ref{1.4}) can be considered as the equation to minimal surface in $\mathbb{R}^n$. By Bernstein theorem of minimal surface in Euclidean space, $f(x-\overrightarrow{c}t)$ is the linear function of $x_{1},\cdots, (x_{n}-ct)$ for $n\leq 8$. Then, we can get the affine solutions of time like extremal surface
\begin{eqnarray*}
f = a_{1}x_{1} + \cdots + a_{n}x'_n + b =  a_{1}x_{1} + \cdots +\frac{1}{\sqrt{1-c^2}}(x_n-ct) + b.
  \end{eqnarray*}
For the stability of this kind of flat plane solution for time-like extremal surface in Minkowski space, Allen et al. gave the positive answer about its stability in \cite{Allen}.

When $c = 1$, we can get $\Delta = 1 + \sum\limits_{i=1}^{n-1}|\partial_{i}f|^{2}$. Therefore, the system (\ref{1.3}) can be rewritten as
\begin{equation}\label{1.5}
\partial_{1}(\frac{\partial_{1}f}{\sqrt{\Delta}})+\cdots+\partial_{n-1}(\frac{\partial_{n-1}f}{\sqrt{\Delta}})=0.
\end{equation}
Then, the equation (\ref{1.5}) can be considered as the minimal surface equation in $\mathbb{R}^{n-1}$, which is independent of the $n$-th variable. Using Bernstein theorem in Euclidean space, we can get
\begin{equation}\label{1.6}
f(x_{1},\cdots,x_{n}\pm t)=(a_{1}x_{1}+a_{2}x_{2}+\cdots+a_{n-1}x_{n-1}+b)F(x_{n}\pm t).
\end{equation}
We can easily check the following form
\begin{equation}\label{1.7}
f(x_{1},\cdots,x_{n}\pm t)=a_{1}x_{1}F_{1}(x_{n}\pm t)+a_{2}x_{2}F_{2}(x_{n} \pm t)+\cdots+a_{n-1}x_{n-1}F_{n-1}(x_n \pm t)+bF_{n}(x_{n}\pm t)
\end{equation}
is also the exact solution of time-like extremal surface equation, where $F_{i}\ (i = 1, \cdots, n)$ are $C^{2}$ functions.

\begin{remark}
In Minkowski space $\mathbb{R}^{1+(1+n)}$, the authors \cite{Kong Sun and Zhou} gave the coefficient and necessary condition of the global classical solution to time-like extremal surface in one dimensional space. Liu and Zhou gave the asymptotic behavior to global classical solutions, which tends to the combinations of traveling wave solutions \cite{Liu and Zhou} and got the exact solutions of the traveling wave solutions with the form $\phi(x\pm t)$. The authors also got the stability of traveling wave solution to Cauchy problem to the equation of timelike extremal surface in Minksowski space $\mathbb{R}^{1+(1+n)}$ \cite{Liu and Liu}. The global existence of the initial boundary value problem of timelike extremal surface equation was studied in \cite{Liu and Zhou 1} and \cite{Liu and Zhou 2}.
\end{remark}

\begin{remark}
In this case, we get the exact solutions with the form as (\ref{1.6}) or (\ref{1.7}) for  $n \leq 9$ for time-like extremal hypersurface in Minkowski space. It is different to the Bernstein theorem of minimal surface. In the second part in this paper, we will also consider the global stability of traveling wave solution having the special form with the speed of light.
\end{remark}

When $c > 1$, $\Delta = 1 + \sum\limits_{i=1}^{n-1}|\partial_{i}f|^{2} - (c^{2}-1)|\partial_{n}f|^{2}$. Therefore,
\begin{equation}\nonumber
\partial_{n}\left(\frac{\partial_{n}f}{\sqrt{\Delta}}\right)-\frac{1}{c^{2}-1}\partial_{1}\left(\frac{\partial_{1}f}{\sqrt{\Delta}}\right)-\frac{1}{c^{2}-1}\partial_{2}\left(\frac{\partial_{2}f}{\sqrt{\Delta}}\right)
-\cdots-\frac{1}{(c^{2}-1)}\partial_{n-1}\left(\frac{\partial_{n-1}f}{\sqrt{\Delta}}\right)=0.
\end{equation}
Using the variable transformation $x'_n =  \frac{1}{\sqrt{c^2 -1}}(x_n - ct)$, we have
\begin{equation}\label{1.8}
\tilde{\partial}_{n}\left(\frac{\tilde{\partial}_{n}f}{\sqrt{\Delta'}}\right)-\partial_{1}\left(\frac{\partial_{1}f}{\sqrt{\Delta'}}\right)- \partial_{2}\left(\frac{\partial_{2}f}{\sqrt{\Delta'}}\right)
-\cdots- \partial_{n-1}\left(\frac{\partial_{n-1}f}{\sqrt{\Delta'}}\right)=0.
\end{equation}
where $\tilde{\partial}_{n} = \partial_{x'_n}, \Delta' = 1+\sum\limits_{i=1}^{n-1}|\partial_{x_i}f|^{2} - |\partial_{x'_n}f|^{2}$. Then, we find that the system (\ref{1.8}) is the equation of time-like extremal hypersurface in Minkowski space $\mathbb{R}^{1, n-1}$.

\begin{remark}
The above results can be considered as the generalized Bernstein theorem of the time-like extremal surface in Minkowski space.
\end{remark}

The equation (\ref{1.1}) can be considered as the $n$-dimensional quasilinear wave equation. Most of the global results to nonlinear wave equations are concerned with Cauchy problem with small initial data, especially in high space dimensional case. Recently, one kind of large solution called "short pulse solution" are considered in \cite{Christodoulou}, \cite{Klainerman and Rodnianski}. For semilinear wave equations satisfying the null condition, global solution with large inital data is considered in \cite{Wang and Yu 1}, \cite{Wang and Yu 2}, \cite{Miao Pei and Yu}, \cite{Yang}. Wang and Wei gave the global existence of short pulse solution to relativistic membrane equations \cite{Wang and Wei}. For the stability of time-like extremal surface in Minkowski space, Brendle obtained the stability of a flat hyperplane for $n\geq 3$ in \cite{Brendle}. Krieger and Lindblad \cite{Krieger and Lindblad} studied the radial perturbations of the static catenoid solution to hyperbolic
vanishing mean curvature flow which are supported far away from the `collar' of the catenoid. Donninger, Krieger, Szeftel and Wong \cite{Donninger Krieger Szeftel and Wong} showed that the linear instability of catenoid is the only obstruction to the global nonlinear stability. In the following, we will consider the stability of traveling wave solutions with the  speed of light for the time-like extremal hypersurface in $1+(2+1)$ dimensional Minkowski space. The equation to time-like extremal hypersurface in $\mathbb{R}^{1+(2+1)}$ is as follows
\begin{equation}\label{1.9}
(\frac{v_t}{\sqrt{1 - Q_0(v,v)}})_t - \sum\limits_{i = 1}^2 (\frac{v_{x_i}}{\sqrt{1 - Q_0(v,v)}})_{x_i} = 0
\end{equation}
where $Q_0(\phi, \psi) \doteq \phi_t\psi_t - \phi_{x_1}\psi_{x_1} - \phi_{x_2}\psi_{x_2}$ is the null form (see \cite{Klainerman}, \cite{Alinhac}). Barbashov, Nesterenko and Chervyakov studied the nonlinear differential equations and obtained explicitly their general solutions to relativistic string in one dimensional case \cite{Barbashov}. Milnor described all entire time-like minimal surfaces in the three-Minkowski space via a kind of Weierstrass representation \cite{Milnor}. Recently, for the vanishing mean curvature equation, the existence of global smooth solutions for small initial data has been addressed successfully by Lindblad \cite{Lindblad}.  Allen, Andersson and Isenberg \cite{Allen} proved the small data global existence for timelike extremal submanifold with codimension larger than one.
Here, we first consider the stability of a class of traveling wave solution with the velocity 1. We denote $v(t, x)$ as a small perturbation of the traveling wave solution with the speed of light. By rotational symmetry, we assume that the traveling wave is of the form $(ax_2+b)F(x_1 + t)$. Let
\begin{equation} \label{1.10}
v(t, x) = (ax_2 + b)F(x_1 +t) + u(t, x)
\end{equation}
where $x = (x_1, x_2)$. Then, we can get
\begin{equation}\label{1.11}
(\frac{u_t + (ax_2 + b)F_t}{\sqrt{1 - Q_0(u, u) - 2[ (ax_2 + b) F'(u_t - u_{x_1}) - au_{x_2}F]}})_t - \sum\limits_{i = 1}^2 (\frac{u_{x_i} + (ax_2 + b)F_{x_i}}{\sqrt{1 - Q_0(u, u) - 2[ (ax_2 + b) F'(u_t - u_{x_1}) - au_{x_2}F]}})_{x_i} = 0.
\end{equation}
We recast the system (\ref{1.11}) as follows
\begin{eqnarray} \nonumber
\Box u &=& - \frac{1}{2}\frac{Q_0(u + (ax_2 + b)F, Q_0(u, u) + 2[(ax_2 + b) F'(u_t - u_{x_1}) - au_{x_2}F])}{1 - Q_0(u,u)- 2[(ax_2 + b) F'(u_t - u_{x_1}) - au_{x_2}F]}\\ \label{1.12}
&=& \frac{1}{2}(1-\tilde{H})Q_0(u + (ax_2 + b)F, Q_0(u, u) + 2[(ax_2 + b) F'(u_t - u_{x_1}) - au_{x_2}F])
\end{eqnarray}
where $\Box = \partial_{tt} - \partial_{x_1x_1} -\partial_{x_2x_2}$ and $\tilde{H} = \frac{1}{1 - Q_0(u,u)- 2[(ax_2 + b) F'(u_t - u_{x_1}) - au_{x_2}F]} + 1$.

Then, because of the traveling wave solutions, there is one more linear term in above system than the original system. We can also rewrite the system as
 \begin{eqnarray} \nonumber
&&\Box u - Q_0((ax_2 + b)F, 2[(ax_2 + b) F'(u_t - u_{x_1}) - au_{x_2}F]) \\ \nonumber
&&= \frac{1}{2}(1-\tilde{H})\{Q_0(u, Q_0(u, u) + 2[(ax_2 + b) F'(u_t - u_{x_1}) - au_{x_2}F]) + Q_0((ax_2 + b)F, Q_0(u, u))\} \\ \nonumber
&&- \frac{1}{2}\tilde{H}Q_0((ax_2 + b)F, 2[(ax_2 + b) F'(u_t - u_{x_1}) - au_{x_2}F])
\end{eqnarray}
It is easily get the above system is also hyperbolic.

Under the assumptions
\begin{equation*}
\tilde{H}_1)\ \ \ \ \ \  | (\xi \frac{d}{d\xi})^{k_2} (\frac{d}{d\xi})^{k_1} F(\xi)| \leq C_{k_1, k_2}(2+\xi )^{-1}
\end{equation*}
where $k_1\geq 0, k_2 \geq 0, k_1 +k_2 \leq s\ \ (s\geq 13)$ and $\xi = t + x_1$, we shall consider the following Cauchy problem
\begin{equation}\label{1.13}
t = 0:\ \ u =  f(x),\ \ u_t = g(x),\ \  \ x \in \mathbb{R}^2
\end{equation}
with
\begin{eqnarray*}
\mathrm{Supp}\ \{ f, g\} \subset \{ x | \ |x|\leq 1\} \ \ \mbox{and}  \ \ \|f\|_{H^{s+1}} + \|g\|_{H^s} <  \varepsilon, \ \ \ s \geq 13.
\end{eqnarray*}
By the finite propagation speed of waves, we can obtain
\begin{eqnarray}\label{1.14}
\mathrm{Supp}\  u(t,\cdot) \subset \{ x | \ |x|\leq t + 1\}.
\end{eqnarray}

Then, we can get the following result
\begin{theorem}
 Under the assumption $\tilde{H}_1)$, there exists the global classical solutions to Cauchy problem (\ref{1.11}) (\ref{1.13}), provided that $\varepsilon$ is sufficiently small.
 \end{theorem}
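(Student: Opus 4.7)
The plan is to prove global existence via the classical Klainerman vector field method combined with Alinhac's ghost weight technique, adapted to handle the linear perturbation induced by the traveling wave background. I will work with the second (manifestly null) form of the equation,
\begin{equation*}
\Box u - Q_0((ax_2+b)F,\,2[(ax_2+b)F'(u_t-u_{x_1}) - au_{x_2}F]) = N_3(u,\partial u,\partial^2 u),
\end{equation*}
where $N_3$ gathers the cubic and higher remainder. Introducing the usual set of vector fields $\Gamma = \{\partial_\alpha,\ \Omega_{12}=x_1\partial_2-x_2\partial_1,\ L_i=t\partial_i+x_i\partial_t,\ S=t\partial_t+x\cdot\nabla\}$, I will define the higher order generalized energy $E_s(t)=\sum_{|\alpha|\le s}\|\partial\Gamma^\alpha u\|_{L^2}^2$ and run a standard continuity (bootstrap) argument based on the assumption $E_s(t)^{1/2}\le C\varepsilon(1+t)^{\delta}$ for some small $\delta$. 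Pointwise decay for $\Gamma^\alpha u$ with $|\alpha|\le s-3$ will come from the 2D Klainerman--Sobolev inequality $|\partial\Gamma^\alpha u(t,x)|\lesssim (1+t+|x|)^{-1/2}(1+|t-|x||)^{-1/2}E_{s}^{1/2}$.

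The cubic terms $N_3$ are handled in the usual null-form way: writing $Q_0(\phi,\psi)=\partial_\alpha\phi\,\partial^\alpha\psi$, one splits any first-order derivative into a good (tangential to the outgoing cone) part and a transverse part, and checks the identity $\partial_t-\partial_r \sim t^{-1}(S+\omega\cdot L)$, so that one factor in each product gains a $(1+|t-r|)(1+t)^{-1}$ factor. Together with the bootstrap decay, this yields an integrable $L^2\!\to\!L^\infty$ source whenever $s\ge 13$, exactly as in Klainerman's small-data proof.

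The main obstacle is the linear term $Q_0((ax_2+b)F,2[(ax_2+b)F'(u_t-u_{x_1})-au_{x_2}F])$, because the coefficient $(ax_2+b)$ grows in $x_2$ and, without care, the $\Gamma$-action on the background $(ax_2+b)F$ produces unbounded weights. The key observation, exploiting the null structure, is that every second derivative of $u$ appearing in this term is of the form $(\partial_t-\partial_{x_1})^2 u$ or $(\partial_t-\partial_{x_1})\partial_{x_2}u$, i.e.\ tangential to the outgoing null line $x_1+t=\mathrm{const}$. Hypothesis $\tilde H_1$ gives $|F|,|F'|,|\xi F'|\lesssim (2+\xi)^{-1}$ with $\xi=x_1+t$, and a direct computation shows $Z((ax_2+b)F)$ for $Z\in\Gamma$ remains a linear combination of $(ax_2+b)$, $x_2$, and $x_1$ times derivatives of $F$ satisfying the same decay. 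Combined with the finite propagation support $|x|\le t+1$ and Alinhac's ghost weight $e^{q(x_1+t)}$, the spacetime integral of the resulting term is controlled by $\int_0^t(2+\tau)^{-2}E_s(\tau)\,d\tau$, which is Grönwall-admissible and does not destroy the bootstrap.

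Closing the argument: standard local existence gives a solution on $[0,T^*)$, and on this interval I combine the energy inequality $E_s(t)\le E_s(0)+\int_0^t\!\bigl[\text{ghost term}+\text{null cubic term}\bigr]d\tau$ with the bootstrap and Klainerman--Sobolev to obtain $E_s(t)\le C(\varepsilon^2+\varepsilon^3(1+t)^{2\delta})$, which improves the bootstrap for $\varepsilon$ small, hence $T^*=+\infty$. The hardest technical step is verifying that the ghost weight, rather than the usual $(1+|t-r|)^{-\mu}$ weight, is the correct one here, because the traveling wave picks out the single null direction $\partial_t-\partial_{x_1}$ rather than the radial one; one must therefore redo the weighted energy identity with multiplier $e^{q(x_1+t)}\partial_t u$ and track the additional lower-order commutators produced by $[\Gamma,\Box]$ against this non-radial weight.
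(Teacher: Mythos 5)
Your proposal founders on the commutator/decay structure that is the actual heart of this problem. You propose to commute the \emph{full} Klainerman algebra (including $L_1$, $L_2$, $\Omega$, $S$ separately) and to invoke the standard Klainerman--Sobolev decay $(1+t)^{-1/2}(1+|t-r|)^{-1/2}$. But the background coefficients are functions of $\xi=x_1+t$ (and, for $a\neq 0$, of $x_2$), and the individual fields do not commute acceptably with multiplication by $F(x_1+t)$, $F'(x_1+t)$: e.g.\ $L_2F=x_2F'$, $\Omega F=-x_2F'$, and inside the support (\ref{1.19}) one only has $|x_2|\lesssim\sqrt{(2+\xi)(2+\eta)}$, so $|x_2F'(\xi)|\lesssim(2+\xi)^{-1/2}(2+\eta)^{1/2}$, which \emph{grows} along the characteristic strip; your claim that $Z((ax_2+b)F)$ ``satisfies the same decay'' is false. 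The paper resolves this precisely by discarding one field and keeping only the combinations $\Gamma_2=\partial_t-\partial_{x_1}$, $\Gamma_4=L_0-L_1$, $\Gamma_6=L_2+\Omega$ (together with $\Gamma_1,\Gamma_3,\Gamma_5$), which annihilate or tamely act on functions of $\xi$; the price is that the standard Klainerman--Sobolev inequality is lost, and one only gets decay of order $(2+\xi)^{-1/4}(2+\eta)^{-1/4}$ (Propositions 2.1--2.2, proved in Goursat coordinates), with \emph{no} decay in $t$. Your $(1+t)^{-1/2}$ bootstrap therefore has no foundation.

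The second gap is your treatment of the linear background term. You assert its spacetime contribution is controlled by $\int_0^t(2+\tau)^{-2}E_s(\tau)\,d\tau$, but $F'^2$ decays in $\xi=x_1+t$, not in $t$: on the region $x_1+t=O(1)$ (the resonant null strip, which is exactly where the traveling wave lives) the coefficient is of size one for all time, so no Gr\"onwall-admissible time decay is available. This is why the paper abandons the Cauchy-in-$t$ energy altogether, recasts the problem as a Goursat problem with data on $\xi=-1$, $\eta=-1$, measures the energy as fluxes through $\xi=\mathrm{const}$ hypersurfaces with weights $(2+\xi)^{-1-\frac1{10}}(2+\eta)^{-\frac1{10}}$ etc., and uses the bounded integrating factor $e^{-B(\xi)}$ with $B'=F'^2$ (your ghost weight $e^{q(x_1+t)}$ is the same device, and this part of your plan is sound in spirit) together with separate multipliers $u_{k\eta}$ and $u_{k\xi}$ and a long term-by-term analysis of the commuted quadratic and cubic terms ($A_1$--$A_5$, $\bar A_1$--$\bar A_{14}$, the $J_k$ decomposition). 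Without replacing the $t$-decay framework by this $\xi$-flux framework (or something equivalent), neither the linear term nor the quadratic terms $F'Q_0(u,u_\eta)$, $F''u_\eta^2$ can be closed, so the proposal as written does not yield the theorem.
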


\begin{remark}\label{Remark 4}
The above main result establishes some kind of stability of the traveling wave solution $F(x_1 + t)$ for the equation of time-like extremal hypersurface in Minkowski space. For the global solution $u, u_\eta$ and $u_{x_2}$ are decaying in $\xi$ with exponent $-1/4$ and $u_{\xi}$ are increasing in $\xi$ with the rate $(2+\xi)^{\delta}$. The parameter $\delta$ is the arbitrary small positive constant.
\end{remark}

\begin{remark}
Using the above main result, we can also get the interesting result  above the stability of certain kind of traveling wave solution  with the speed larger than 1. For $n = 2$, system (\ref{1.8}) can be considered as the time-like extremal hypersurface in Minkoski space $\mathbb{R}^{1+(1+1)}$. Using the result in \cite{Liu and Zhou 1}, there is an exact traveling solutions $\Phi(x_1\pm x'_2)$, The exact traveling wave solution of time-like extremal hypersurface in Minkowski space $R^{1+(2+1)}$ is $\Phi(x_1 \pm \frac{1}{\sqrt{c^2-1}}(x_2 - ct))$, where the speed of traveling wave $c$ is large than the speed of light. Using Lorentz transformation
\begin{equation*}
\tilde{x}_1 =\frac{\sqrt{c^2-1}}{c} x_1 + \frac{1}{c}x_2,\ \ \ \tilde{x}_2 =\frac{1}{c} x_1 -\frac{\sqrt{c^2-1}}{c} x_2
\end{equation*}
we can get the traveling wave solution $\Phi(\sqrt{\frac{c^2}{c^2-1}}(\tilde{x}_1 - t)) = \tilde{\Phi}(\tilde{x}_1 - t)$. By the above main result of Theorem 1.1, we can get the stability of this kind of traveling wave solution.
\end{remark}

\begin{remark}
The above result establishes the global existence of classical solutions for a class of large initial data of  quasilinear wave equations.
\end{remark}

By the local existence result of nonlinear wave equations with small initial data, we can get the classical solutions in the time interval $[-2, 0]$. For getting the global existence result for nonlinear wave equations, the now classical method is to use Lorentz invariance and introduce the Klainerman's vector fields. we will introduce operator $Z$ which are infinitesimal generators of the Lorentz group as follows
\begin{equation}\label{1.15}
Z = \{\partial_t, \partial_{x_1}, \partial_{x_2}, L_0, L_1, L_2, \Omega\}
\end{equation}
where
\begin{eqnarray}\label{1.16}
L_0 = t\partial_t + x_1 \partial_{x_1}+ x_2 \partial_{x_2}, L_1 = x_1 \partial_{t} + t \partial_{x_1}, L_2 = x_2 \partial_{t} + t \partial_{x_2}, \Omega = x_1 \partial_{x_2} - x_2 \partial_{x_1}.
\end{eqnarray}
However, $Z$ operators does not communicate with multiplication of $F'(x_1 + t)$ . Then we introduce the $\Gamma$ operator
\begin{equation}\label{1.17}
\Gamma = \{\Gamma_1, \Gamma_2, \Gamma_3, \Gamma_4, \Gamma_5, \Gamma_6\}
\end{equation}
where
\begin{eqnarray}\label{1.18}
\begin{cases}\Gamma_1 = \partial_t + \partial_{x_1}, \ \Gamma_2 =\partial_t - \partial_{x_1}, \ \Gamma_3 = \partial_{x_2}, \ \ \Gamma_4 = (t - x_1)(\partial_t - \partial_{x_1}) + x_2\partial_{x_2} = L_0 - L_1,\\
\Gamma_5 = (t + x_1)(\partial_t + \partial_{x_1}) + x_2\partial_{x_2} = L_0 + L_1, \Gamma_6 = (t + x_1)\partial_{x_2} + x_2 (\partial_t - \partial_{x_1})= L_2 + \Omega.
\end{cases}
\end{eqnarray}
These operators will communicate with the linearized equation for $u$. Compared to the $Z$ operators, the $\Gamma$ operator has just one operator less. However, this is the crucial point to prove our main result.
 Although, we can also get the decay in the global Klainerman Sobolev inequality in Klein-Gorden equations without the commutator $L_0$ and wave equations with multiple speed without the commutator $L_i$ by the Klainerman-Sideris inequality. Here we can not get any decay of the classical solutions in t direction with only $\Gamma$ operators. Because the effect of traveling wave solution, we will consider the system in Goursat coordinates. Fortunately, using the Goursat coordinates, we can get the decay in $\xi$ direction, which is also more weak (only of exponent $-1/4$) than the usual Cauchy problem (which is $-1/2$ in $t$ direction). It is another main difficulty in our problem. Therefore, in the following we can recast our problem to study the generalized Goursat problem. Let $\xi = t + x_1, \eta = t - x_1$, we consider the Goursat problem with the data as follows
 $$\xi  = -1: v = h_1(\eta, x_2);\ \ \ \ \ \eta = -1: v = h_2(\xi, x_2)$$
and satisfy the compatibility condition of order $s + 1$ at the line $(\xi, \eta) = (-1,-1)$. Moreover, by the local existence theorem of quasilinear wave equation, we have
$$\|h_1\|_{H^{s+1}} \leq C_0 \varepsilon, \ \ \ \ \ \|h_2\|_{H^{s+1}} \leq C_0 \varepsilon  $$
where $C_0$ is a positive constant independent of $\varepsilon$.
Moreover,
$$\mathrm{Supp}\ \ h_1 \subset \{-1 \leq \eta \leq 0, -1 \leq x_2 \leq 1\} $$
$$\mathrm{Supp}\ \ h_2 \subset \{-1 \leq \xi \leq 0, -1 \leq x_2 \leq 1\} $$
Noting (\ref{1.14}), we can get
\begin{eqnarray}\label{1.19}
\mathrm{Supp}\ \  v  \subset \{ (\xi , \eta, x_2)| \ |x_2|\leq  \sqrt{(2+\xi)(2+\eta)}\}.
\end{eqnarray}
Therefore, we will consider the generalized Goursat problem in coordinates $(\xi, \eta, x_2)$ instead of the original system.
%

\section{Preliminaries}
For getting the stability result of the traveling wave solutions, we will give the key estimates in this section, which plays an important role in  proving our main result.
Noting (\ref{1.18}), in coordinates $(\xi, \eta)$, we have
\begin{eqnarray}\label{2.3}
\begin{cases}\Gamma_1 = 2\partial_\xi, \ \Gamma_2 =2\partial_{\eta}, \ \Gamma_3 = \partial_{x_2}, \\
\Gamma_4 = 2\eta\partial_\eta  + x_2\partial_{x_2} ,\
\Gamma_5 = 2\xi\partial_\xi  + x_2\partial_{x_2} , \Gamma_6 = \xi \partial_{x_2} + 2 x_2 \partial_\eta.
\end{cases}
\end{eqnarray}
Firstly, the elementary facts about $\Gamma$ operators are as follows
\begin{lemma}\label{lem 2.2}
(\cite{Li and zhou})Noting the relations of $Z$ and $\Gamma$, we can easily get
\begin{eqnarray}\label{2.1}
&&\Gamma^k Q_0(\phi, \psi) = \sum\limits_{0\leq k_1 + k_2 \leq k} A_{k_1, k_2} Q_0(\Gamma^{k_1}\phi, \Gamma^{k_2}\psi),\\  \label{2.2}
&&\Box \Gamma^k v = \Gamma^k \Box v + \sum\limits_{k' < k} A^{(1)}_{k', k} \Gamma^{k'} \Box v.
\end{eqnarray}
\end{lemma}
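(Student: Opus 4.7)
The plan is to reduce both identities to commutator calculations for each single operator $\Gamma_j$ and then iterate by induction on $k$. The key observation is that every $\Gamma_j$ in (\ref{1.18}) is, up to a constant, either a coordinate derivative or a linear combination of the standard Klainerman fields $\{\partial_\alpha, L_0, L_1, L_2, \Omega\}$ listed in (\ref{1.15})--(\ref{1.16}): explicitly, $\Gamma_1,\Gamma_2,\Gamma_3$ are multiples of $\partial_t\pm\partial_{x_1}$ and $\partial_{x_2}$, while $\Gamma_4=L_0-L_1$, $\Gamma_5=L_0+L_1$, $\Gamma_6=L_2+\Omega$. Thus every known commutator relation for $Z$ transfers directly to $\Gamma$ by linearity.

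For the null-form identity (\ref{2.1}), I would first recall the textbook identities: partial derivatives satisfy $\partial_\alpha Q_0(\phi,\psi)=Q_0(\partial_\alpha\phi,\psi)+Q_0(\phi,\partial_\alpha\psi)$; the Lorentz boosts $L_i$ and the rotation $\Omega$ satisfy the analogous Leibniz rule with no extra term (since they act as derivations on the Minkowski metric up to a quantity that annihilates $Q_0$); and the scaling $L_0$ produces one extra multiple of $Q_0(\phi,\psi)$, namely $L_0 Q_0(\phi,\psi)=Q_0(L_0\phi,\psi)+Q_0(\phi,L_0\psi)+2Q_0(\phi,\psi)$. Combining these via the decomposition of $\Gamma_j$ above gives, for every single $\Gamma_j$,
\begin{equation*}
\Gamma_j Q_0(\phi,\psi)=Q_0(\Gamma_j\phi,\psi)+Q_0(\phi,\Gamma_j\psi)+c_j\,Q_0(\phi,\psi)
\end{equation*}
for a constant $c_j\in\{0,2\}$. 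A straightforward induction on $k$, applying this relation once per derivative, yields (\ref{2.1}); the lower-order terms $k_1+k_2<k$ in the sum arise precisely from the scalar $c_j Q_0(\phi,\psi)$ contribution each time one differentiates.

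For the commutator formula (\ref{2.2}), I would again start from the well-known identities $[\partial_\alpha,\Box]=0$, $[L_i,\Box]=0$ ($i=1,2$), $[\Omega,\Box]=0$, and $[L_0,\Box]=-2\Box$. Transferring through the decomposition of $\Gamma_j$, one obtains $[\Gamma_j,\Box]=d_j\Box$ with $d_j\in\{0,-2\}$ (the nonzero cases being $\Gamma_4$ and $\Gamma_5$ which contain $L_0$). Rewriting as $\Box\Gamma_j=\Gamma_j\Box+d_j\Box$ and iterating gives, by induction on $k$,
\begin{equation*}
\Box\Gamma^k v=\Gamma^k\Box v+\sum_{k'<k}A^{(1)}_{k',k}\,\Gamma^{k'}\Box v,
\end{equation*}
where the coefficients $A^{(1)}_{k',k}$ are produced by the combinatorial expansion of moving $\Box$ past each $\Gamma_j$ in the word $\Gamma^k$.

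The argument is essentially bookkeeping; the only mild subtlety is checking that the non-standard combinations $\Gamma_4,\Gamma_5,\Gamma_6$ still obey clean Leibniz-type commutators, but this is automatic from the linearity of the commutator bracket once the individual $Z$-identities are in hand. No estimate is needed, and no loss of generality is involved in the induction, so I do not expect any genuine obstacle beyond careful tracking of the constants $c_j,d_j$.
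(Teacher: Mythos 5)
Your argument is correct and is essentially the route the paper takes: the lemma is quoted from the standard commutation identities for the Klainerman fields (citing Li--Zhou), transferred to the $\Gamma$ operators exactly through the relations $\Gamma_4=L_0-L_1$, $\Gamma_5=L_0+L_1$, $\Gamma_6=L_2+\Omega$ and linearity, followed by induction on $k$. The only blemish is the sign of the scaling contributions (in fact $L_0Q_0(\phi,\psi)=Q_0(L_0\phi,\psi)+Q_0(\phi,L_0\psi)-2Q_0(\phi,\psi)$ and $\Box L_0=L_0\Box+2\Box$), which is immaterial here since the constants $A_{k_1,k_2}$ and $A^{(1)}_{k',k}$ in the statement are unspecified.
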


Through a simple computation, we can also get
\begin{equation}\label{2.4}
\Box = 4 \partial_{\xi\eta} - \partial_{x_2x_2}, \ \ Q_0(\phi, \psi) = 2(\phi_\xi \psi_\eta + \phi_\eta\psi_\xi) - \phi_{x_2}\psi_{x_2}
\end{equation}

In the following, we will consider the estimates of the commutators in coordinates $(\xi, \eta, x_2)$.
\begin{lemma}
In Goursat coordinates, for the null form $Q_0$, there hold
\begin{eqnarray}\label{2.5}
&&|Q_0(\phi, \psi)| \lesssim (2+\xi)^{-1} (|\Gamma\phi||\nabla \psi| + |\nabla \phi||\Gamma \psi|),\\ \label{2.6}
&&|Q_0(\phi, \psi)| \lesssim (2+\eta)^{-1} [|\Gamma\phi|(|\psi_\xi|+ |\psi_{x_2}|) + (|\phi_\xi|+ |\phi_{x_2}|)|\Gamma \psi|),
\end{eqnarray}
where $\nabla = \{\partial_{\eta}, \partial_{x_2}\}$.
\end{lemma}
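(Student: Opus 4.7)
The strategy is to exploit the null structure of $Q_0$ in Goursat coordinates---where no $\phi_\xi\psi_\xi$ product appears---together with the weighted commutators $\Gamma_5$ and $\Gamma_6$ to pull out the factor $(2+\xi)^{-1}$. From (\ref{2.3}) one reads off
\begin{equation*}
2\xi\,\partial_\xi = \Gamma_5 - x_2\,\partial_{x_2}, \qquad \xi\,\partial_{x_2} = \Gamma_6 - 2x_2\,\partial_\eta,
\end{equation*}
each of which trades a ``bad'' derivative ($\partial_\xi$ or $\xi\partial_{x_2}$) for a $\Gamma$-operator plus an $x_2$-weighted ``good'' derivative, at the cost of one power of $\xi$. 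Multiplying $Q_0(\phi,\psi) = 2\phi_\xi\psi_\eta + 2\phi_\eta\psi_\xi - \phi_{x_2}\psi_{x_2}$ through by $2\xi$ and substituting these relations into the three terms gives
\begin{equation*}
2\xi\, Q_0(\phi,\psi) = 2\Gamma_5\phi\,\psi_\eta + 2\phi_\eta\,\Gamma_5\psi - 2\phi_{x_2}\Gamma_6\psi + 2x_2(\phi_{x_2}\psi_\eta - \phi_\eta\psi_{x_2}).
\end{equation*}

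The crux is the cancellation identity
\begin{equation*}
2x_2(\phi_{x_2}\psi_\eta - \phi_\eta\psi_{x_2}) = \phi_{x_2}\Gamma_6\psi - \Gamma_6\phi\,\psi_{x_2},
\end{equation*}
which I would verify by expanding $\Gamma_6\phi = \xi\phi_{x_2}+2x_2\phi_\eta$ and $\Gamma_6\psi = \xi\psi_{x_2}+2x_2\psi_\eta$: the two $\xi\phi_{x_2}\psi_{x_2}$ contributions cancel exactly. Plugging back yields the clean identity
\begin{equation*}
2\xi\, Q_0(\phi,\psi) = 2\Gamma_5\phi\,\psi_\eta + 2\phi_\eta\,\Gamma_5\psi - \phi_{x_2}\Gamma_6\psi - \Gamma_6\phi\,\psi_{x_2},
\end{equation*}
in which every term is a product of a $\Gamma$-derivative of one factor with a $\nabla$-derivative of the other. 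Since $|\Gamma_5\phi|,|\Gamma_6\phi|\le |\Gamma\phi|$, taking absolute values produces the intermediate bound $|\xi|\,|Q_0(\phi,\psi)| \lesssim |\Gamma\phi||\nabla\psi| + |\nabla\phi||\Gamma\psi|$.

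For $\xi\ge 1$, $(2+\xi)$ and $|\xi|$ are comparable, so this immediately gives (\ref{2.5}). For $\xi\in[-1,1]$ the weight $(2+\xi)^{-1}$ is bounded below and (\ref{2.5}) reduces to the trivial pointwise bound via $\phi_\xi = \frac{1}{2}\Gamma_1\phi$ and $\phi_{x_2} = \Gamma_3\phi$. The estimate (\ref{2.6}) is obtained by the symmetric procedure, multiplying by $2\eta$ and using $\Gamma_4 = 2\eta\partial_\eta + x_2\partial_{x_2}$ in place of $\Gamma_5$. The main obstacle I foresee is that there is no $\Gamma$-operator of the form $\eta\partial_{x_2}+2x_2\partial_\xi$ available (the natural candidate $L_2-\Omega$ is absent from the family because it does not commute with the traveling-wave background), so the residual $\phi_{x_2}\psi_{x_2}$ contribution after multiplication by $2\eta$ must be handled differently---most naturally by invoking the support bound (\ref{1.19}) to absorb the remaining $|x_2|$ weight into $(2+\eta)$. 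Locating the cancellation identity displayed above is the only genuinely nonobvious step; everything else is algebraic bookkeeping.
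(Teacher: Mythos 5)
Your treatment of (\ref{2.5}) is correct and is essentially the paper's own argument. Your final identity $2\xi\,Q_0(\phi,\psi)=2\Gamma_5\phi\,\psi_\eta+2\phi_\eta\,\Gamma_5\psi-\phi_{x_2}\Gamma_6\psi-\Gamma_6\phi\,\psi_{x_2}$ is exactly twice the identity the paper verifies directly, namely $(2+\xi)Q_0=2Q_0+\Gamma_5\phi\,\psi_\eta+\Gamma_5\psi\,\phi_\eta-\tfrac12\Gamma_6\psi\,\phi_{x_2}-\tfrac12\Gamma_6\phi\,\psi_{x_2}$; whether you then split off the bounded region $-1\le\xi\le1$ or, as the paper does, absorb the extra $2Q_0$ using $\partial_\xi=\tfrac12\Gamma_1$, $\partial_{x_2}=\Gamma_3$ is immaterial. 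Your substitution route via $2\xi\partial_\xi=\Gamma_5-x_2\partial_{x_2}$, $\xi\partial_{x_2}=\Gamma_6-2x_2\partial_\eta$ and the cancellation identity is a correct derivation of the same identity.

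The gap is in (\ref{2.6}). The obstruction you flag (no field of the form $\eta\partial_{x_2}+2x_2\partial_\xi$ in (\ref{2.3})) is real, but your proposed repair does not close it. Multiplying by $2\eta$ and substituting $2\eta\partial_\eta=\Gamma_4-x_2\partial_{x_2}$ in the two mixed terms leaves $-2x_2(\phi_\xi\psi_{x_2}+\phi_{x_2}\psi_\xi)-2\eta\,\phi_{x_2}\psi_{x_2}$, and the support bound (\ref{1.19}) only gives $|x_2|\le\sqrt{(2+\xi)(2+\eta)}$, a growing factor for which the right-hand side of (\ref{2.6}) has no room; in particular the pure term would require a pointwise bound of the type $(2+\eta)|\phi_{x_2}|\lesssim|\Gamma\phi|$, which is precisely what the missing operator would supply and which no bounded-coefficient combination of the fields in (\ref{2.3}) yields. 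Concretely, at a point with $x_2=0$, $\xi$ bounded, $\eta$ large, and only $\phi_{x_2},\psi_{x_2}$ nonvanishing there (such configurations are compatible with (\ref{1.19})), the left side of (\ref{2.6}) is of size one while your bound produces only $O\bigl((2+\eta)^{-1}\bigr)$, so this step fails. Be aware that the paper itself supplies no mechanism here either---its proof of (\ref{2.6}) is the single phrase ``by the similar way''---so the second estimate cannot be obtained by the symmetric algebra alone; any honest proof must either restrict to a regime where the $\phi_{x_2}\psi_{x_2}$ contribution is genuinely controllable (e.g.\ trading it against the $(2+\xi)^{-1}$ weight of (\ref{2.5})) or modify the stated right-hand side.
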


\begin{proof}
Noting the null form of (\ref{2.4}), we have
\begin{eqnarray*}
&&(2+\xi) Q_0(\phi, \psi) = 2Q_0(\phi, \psi)+\xi[2(\phi_\xi \psi_\eta + \phi_\eta\psi_\xi) - \phi_{x_2}\psi_{x_2}]\\
&& = 2Q_0(\phi, \psi)+ \Gamma_5 \phi  \psi_\eta+\Gamma_5 \psi  \phi_\eta - \frac{1}{2}\Gamma_6 \psi \phi_{x_2} - \frac{1}{2}\Gamma_6 \phi \psi_{x_2}
\end{eqnarray*}
Then, we can get the estimate of (\ref{2.5}). By the similar way, we can easily obtain the estimate (\ref{2.6}).
\end{proof}

\begin{lemma}\label{lem1}
Let $\phi$ having the compact support as (\ref{1.19}), we have
\begin{eqnarray}\label{2.7}
&&|\frac{\phi(\xi, \cdot)}{\sqrt{(3+\xi)(3+\eta)} -|x_2|}|_{L^2(D)}\lesssim | \phi_{x_2}(\xi, \cdot)|_{L^2(D)},\\ \label{2.8}
&&\frac{|\phi(\xi, \eta, x_2)|}{\sqrt{(3+\xi)(3+\eta)} -|x_2|}\lesssim \sup\limits_{x_2}| \phi_{x_2}(\xi, \cdot)|.
\end{eqnarray}
where, the domain $D = \{(\eta, x_2) | \ -1\leq \eta< +\infty, \ \ -\infty < x_2 < +\infty\}$.
\end{lemma}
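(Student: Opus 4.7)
The plan is to reduce both estimates to a standard one--dimensional Hardy inequality in the $x_2$ variable, applied pointwise in $(\xi,\eta)$. Set
$$R = R(\xi,\eta) = \sqrt{(3+\xi)(3+\eta)}, \qquad r = r(\xi,\eta) = \sqrt{(2+\xi)(2+\eta)}.$$
Since $\xi, \eta \geq -1$, both quantities are real and $R > r$, and the support condition (\ref{1.19}) forces $\phi(\xi,\eta,x_2) = 0$ whenever $|x_2| \geq r$. In particular $\phi$ vanishes on the set $|x_2| \geq R$ where the weight $(R-|x_2|)^{-1}$ would otherwise be singular or negative, so both inequalities are trivial outside $[-R,R]$ and we may restrict attention there.

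For the pointwise bound (\ref{2.8}), I would use the fundamental theorem of calculus: for $0 \leq x_2 \leq R$ the vanishing of $\phi$ at $x_2 = R$ gives
$$\phi(\xi,\eta,x_2) = -\int_{x_2}^{R} \phi_{x_2}(\xi,\eta,s)\,ds,$$
and the mirror identity holds on $[-R,0]$. Taking absolute values yields
$$|\phi(\xi,\eta,x_2)| \leq (R-|x_2|)\sup_{x_2}|\phi_{x_2}(\xi,\cdot)|,$$
which is (\ref{2.8}) after division.

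For the $L^2$ bound (\ref{2.7}), I would freeze $(\xi,\eta)$ and invoke the classical Hardy inequality, namely that if $\psi(R)=0$ then
$$\int_0^R \frac{\psi(s)^2}{(R-s)^2}\,ds \leq 4 \int_0^R |\psi'(s)|^2\,ds,$$
proved in one line by integration by parts against $d\bigl(-(R-s)^{-1}\bigr)$. Applying this and its mirror on $[-R,0]$ to $\psi(x_2)=\phi(\xi,\eta,x_2)$, which indeed vanishes at $x_2=\pm R$ by the support assumption, gives
$$\int_{-\infty}^{\infty}\frac{\phi(\xi,\eta,x_2)^2}{(R - |x_2|)^2}\,dx_2 \leq 4 \int_{-\infty}^{\infty}|\phi_{x_2}(\xi,\eta,x_2)|^2\,dx_2.$$
Integrating in $\eta$ over $[-1,\infty)$ produces (\ref{2.7}).

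The only real conceptual obstacle is checking that $\phi$ actually vanishes at the weight's singular set $|x_2|=R$, which is where the strict inequality $r<R$ (guaranteed by $\xi,\eta\geq -1$) and the support condition (\ref{1.19}) are essential; once this is in place, the lemma reduces to standard one--dimensional Hardy and the fundamental theorem of calculus, with no further structural input from the equation.
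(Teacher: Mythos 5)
Your proposal is correct and follows essentially the same route as the paper: the pointwise bound (\ref{2.8}) via the fundamental theorem of calculus from the vanishing at $|x_2|=\sqrt{(3+\xi)(3+\eta)}$, and the $L^2$ bound (\ref{2.7}) via a one-dimensional Hardy-type inequality in $x_2$ at frozen $(\xi,\eta)$, proved by integrating by parts against $(a\mp x_2)^{-1}$ and then integrating in $\eta$. Splitting into the two half-lines instead of treating $|x_2|$ at once is only a cosmetic difference from the paper's computation, and your explicit remark that the support radius $\sqrt{(2+\xi)(2+\eta)}$ is strictly smaller than the weight's singular radius is exactly the point the paper uses implicitly.
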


\begin{proof}
We first prove the estimate (\ref{2.7}). It is only necessary to prove
\begin{eqnarray}\label{2.9}
|\frac{f(x_2)}{a - |x_2|}|_{L^2(\mathbb{R})} \leq 2 |f_{x_2}|_{L^2(\mathbb{R})},
\end{eqnarray}
provided that $\mathrm{Supp} f \subset \{x_2 \ |\ |x_2|< a\} $.
In fact, we can get the desired estimate by takeing $ a = \sqrt{(3+\xi)(3+\eta)}, f = \phi$ and taking $L^2$-norm on the both side of (\ref{2.9}) for $\eta$.
\begin{eqnarray*}
&&|\frac{f(x_2)}{a - |x_2|}|^2_{L^2} = \int_{-(a-\varepsilon)}^{a-\varepsilon}\frac{|f(x_2)|^2}{(a - |x_2|)^2}d x_2 \\
&& = \int_{-(a-\varepsilon)}^{0}\frac{|f(x_2)|^2}{(a + x_2)^2}d x_2 + \int_{0}^{a-\varepsilon}\frac{|f(x_2)|^2}{(a - x_2)^2}d x_2\\
&& = -2\frac{f^2(0)}{a} +2 \int_{-(a-\varepsilon)}^{0}\frac{f f'}{a + x_2}d x_2 - 2\int_{0}^{a-\varepsilon}\frac{f f'}{a - x_2}d x_2\\
&&\leq 2|\frac{f(x_2)}{a - |x_2|}|_{L^2}|f_{x_2}|_{L^2}.
\end{eqnarray*}
In a similar way, to prove (\ref{2.8}), we only need to get
\begin{equation}\label{2.10}
\frac{|f(x_2)|}{a - |x_2|} \leq \sup\limits_{x_2}|f_{x_2}|.
\end{equation}
Without loss of generality, we may assume $x_2 > 0$ and $f(a) = 0$, then
\begin{eqnarray*}
|f(x_2)| = |-\int_{x_2}^a f_{y}dy|\leq \sup\limits_{x_2}|f_{x_2}|(a - x_2).
\end{eqnarray*}
\end{proof}

\subsection{Sobolev inequality}
For proving our main result, in this subsection we will give Sobolev inequalities as follows
\begin{proposition}\label{1}
Let $\phi$ having the compact support as (\ref{1.19}), we have
\begin{equation}\label{2.11}
|\phi(\xi, \eta, x_2)|\lesssim (2+\eta )^{-\frac{1}{4}}(2+\xi )^{-\frac{1}{4}} \sum\limits_{0\leq |k_1|,|k_2|\leq 1} |\Gamma^{k_1}\nabla^{k_2}\phi(\xi, \cdot)|_{L^2(D)}
\end{equation}
where  $\nabla = \{ \partial_{\eta}, \partial_{x_2}\}$.
\begin{proof}
We first consider the case $|x_2| \leq \frac{\sqrt{(2+ \xi)(2 + \eta)}}{4}$. Then,  we have
\begin{eqnarray}\nonumber
&&|\phi(\xi, \eta, \cdot)|_{L^\infty(|x_2| \leq  \lambda)}\\  \label{2.12}
&&\leq C |\phi(\xi, \eta, \cdot)|_{L^2(|x_2| \leq \ \lambda)}^\frac{1}{2}(|\partial_{x_2} \phi(\xi, \eta, \cdot)|_{L^2(|x_2| \leq  \lambda)}^\frac{1}{2} + \frac{1}{ \lambda^{\frac{1}{2}}}|\phi(\xi, \eta, \cdot)|_{L^2(|x_2| \leq \lambda)}^\frac{1}{2}).
\end{eqnarray}
When $\lambda = 1$, the above inequality follows from Nirenberg's inequality. Then the general case follows from the scaling. In our case, we take
$$\lambda = \frac{\sqrt{(2+ \xi)(2 + \eta)}}{4}.$$
Noting the definition of $\Gamma$, we can get
\begin{eqnarray*}
(2+ \eta)(\Gamma_6\phi +2 \phi_{x_2}) - x_2(\Gamma_4\phi + 4 \phi_\eta) = [(2+\xi)(2+\eta) - x_2^2
]\phi_{x_2}
\end{eqnarray*}
Then,
\begin{equation} \label{2.13}
|\phi_{x_2}| \leq [(2+\eta )^{-\frac{1}{2}}(2+\xi )^{-\frac{1}{2}} + (2+\xi )^{-1}] |\Gamma \phi|.
\end{equation}
Therefore,
\begin{equation}\label{2.14}
|\phi(\xi, \eta, \cdot)|_{L^\infty(|x_2| \leq \frac{\sqrt{(2+ \xi)(2 + \eta)}}{4})} \lesssim [(2+\eta )^{-\frac{1}{4}}(2+\xi )^{-\frac{1}{4}} + (2+\xi )^{-\frac{1}{2}}] \sum\limits_{|k|\leq 1}|\Gamma^k \phi(\xi, \eta, \cdot)|_{L^2(\mathbb{R})}
\end{equation}
When $\xi \geq \eta$, we apply one dimensional Sobolev inequality for the $\eta$
variable to get
\begin{equation}\label{2.15}
|\phi(\xi, \eta, x_2)|\lesssim (2+\eta )^{-\frac{1}{4}}(2+\xi )^{-\frac{1}{4}} \sum\limits_{0\leq |k_1|,|k_2|\leq 1} |\Gamma^{k_1}\nabla^{k_2}\phi(\xi, \cdot)|_{L^2(D)}\ \ \ \mbox{for}\ \  |x_2| \leq \frac{\sqrt{(2+ \xi)(2 + \eta)}}{4}.
\end{equation}
When $\xi \leq \eta$, we have
\begin{eqnarray*}
[(2+\xi)(2+\eta) - x_2^2] \partial_{\eta} = \frac{1}{2}[(2+\xi)(\Gamma_4 + 4\partial_\eta) - x_2(\Gamma_6 + 2\partial_{x_2})].
\end{eqnarray*}
Therefore
$$|\phi_\eta| \lesssim (2+\eta )^{-\frac{1}{2}}(2+\xi )^{-\frac{1}{2}}|\Gamma \phi|.$$
Noting that
\begin{eqnarray*}
\phi (\xi, \eta, x_2)^2 = - \int_\eta^\infty \partial_\eta \phi^2 d\eta ,
\end{eqnarray*}
then apply one dimensional Sobolev inequality for the $x_2$ variable, we can get
\begin{equation}\label{2.16}
|\phi(\xi, \eta, x_2)|\lesssim (2+\eta )^{-\frac{1}{4}}(2+\xi )^{-\frac{1}{4}} \sum\limits_{0\leq |k_1|,|k_2|\leq 1} |\Gamma^{k_1}\nabla^{k_2}\phi(\xi, \cdot)|_{L^2(D)}\ \ \ \mbox{for}\ \  |x_2| \leq \frac{\sqrt{(2+ \xi)(2 + \eta)}}{4}.
\end{equation}

On the other hand, for the case of $|x_2| \geq \frac{\sqrt{(2+ \xi)(2 + \eta)}}{4}$, we introduce the polar coordinates
$$\sqrt{2+\eta} = r \cos \theta, \ \ \ x_2 = r \sin \theta.$$
Then, we can get
\begin{equation}\label{2.17}
 r\partial_r = 2(2+\eta)\partial_\eta + x_2\partial_{x_2} = 4 \partial_\eta + \Gamma_4, \ \ \ \partial_\theta = \sqrt{2+\eta}\partial_{x_2} - x_2 \partial_{\sqrt{2+\eta}} = \sqrt{2+\eta} (\partial_{x_2} - x_2\partial_\eta)
\end{equation}
\begin{equation}\label{2.18}
2\sqrt{2+\eta}r dr d\theta =  d\eta dx_2
\end{equation}
By Nirenberg's inequality,
\begin{eqnarray*}
\sup\limits_{\theta\in \mathbb{S}^1} |\phi(\xi, r, \theta)|^2 &\leq& |\phi(\xi, r, \cdot)|_{L^2{(\theta\in \mathbb{S}^1)}}(|\partial_\theta \phi(\xi, r, \cdot)|_{L^2{(\theta\in \mathbb{S}^1)}} + |\phi(\xi, r, \cdot)|_{L^2{(\theta\in \mathbb{S}^1)}}) \\
&=& -\int_r^\infty \partial_r \{|\phi(\xi, r, \cdot)|_{L^2{(\theta\in \mathbb{S}^1)}}(|\partial_\theta \phi(\xi, r, \cdot)|_{L^2{(\theta\in \mathbb{S}^1)}} + |\phi(\xi, r, \cdot)|_{L^2{(\theta\in \mathbb{S}^1})})\}\\
\end{eqnarray*}
Noting,
\begin{eqnarray*}
-\partial_r |\phi(\xi, r, \cdot)|_{L^2{(\theta\in \mathbb{S}^1)}} = -\frac{1}{2|\phi(\xi, r, \cdot)|_{L^2{(\theta\in \mathbb{S}^1)}}}\int_{\theta\in \mathbb{S}^1} \phi \phi_r d\theta \leq  |\phi_r(\xi, r, \cdot)|_{L^2{(\theta\in \mathbb{S}^1)}}
\end{eqnarray*}
Similarly, we can get
\begin{eqnarray*}
-\partial_r |\phi_\theta(\xi, r, \cdot)|_{L^2{(\theta\in \mathbb{S}^1)}}  \leq  |\phi_{\theta r}(\xi, r, \cdot)|_{L^2{(\theta\in \mathbb{S}^1)}}
\end{eqnarray*}
Then,
\begin{eqnarray}\nonumber
\sup\limits_{\theta\in \mathbb{S}^1} |\phi(\xi, r, \theta)|^2
 &\lesssim& \frac{1}{\sqrt{(2+ \xi)(2 + \eta)}}\int_r^\infty r \{|\phi_r(\xi, r, \cdot)|_{L^2{(\theta\in \mathbb{S}^1)}}(|\partial_\theta \phi(\xi, r, \cdot)|_{L^2{(\theta\in \mathbb{S}^1)}}
  +|\phi(\xi, r, \cdot)|_{L^2{(\theta\in \mathbb{S}^1})}) \\ \label{2.19}
  &+&|\phi(\xi, r, \cdot)|_{L^2{(\theta\in \mathbb{S}^1)}}(| \phi_{r\theta}(\xi, r, \cdot)|_{L^2{(\theta\in \mathbb{S}^1)}} + |\phi_r(\xi, r, \cdot)|_{L^2{(\theta\in \mathbb{S}^1})})\}dr
\end{eqnarray}
Noting (\ref{2.17})(\ref{2.18}), it is not difficulty to get
\begin{equation}\label{2.20}
|\phi(\xi, \eta, x_2)|\lesssim (2+\xi )^{-\frac{1}{4}}(2+\eta )^{-\frac{1}{4}}\sum\limits_{0\leq |k_1|,|k_2|\leq 1} |\Gamma^{k_1}\partial^{k_2}\phi(\xi, \cdot)|_{L^2(D)}\ \ \ \mbox{for}\ \  |x_2| \geq \frac{\sqrt{(2+ \xi)(2 + \eta)}}{4}.
\end{equation}
Therefore,  the estimate (\ref{2.11}) follows from (\ref{2.16}) and (\ref{2.19}).
\end{proof}
\end{proposition}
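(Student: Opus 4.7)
My plan is to split the proof into two cases according to whether $x_2$ is in the interior region $|x_2|\leq \tfrac14\sqrt{(2+\xi)(2+\eta)}$ or in the boundary region $|x_2|\geq \tfrac14\sqrt{(2+\xi)(2+\eta)}$, and combine them with the compact support condition \eqref{1.19}.

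\textbf{Interior region.} First I would use a scaled one-dimensional Nirenberg/Gagliardo--Nirenberg inequality in $x_2$ on the interval $|x_2|\leq \lambda$ with $\lambda=\tfrac14\sqrt{(2+\xi)(2+\eta)}$, getting
\[
|\phi(\xi,\eta,\cdot)|_{L^\infty(|x_2|\leq\lambda)}\lesssim |\phi|_{L^2(|x_2|\leq\lambda)}^{1/2}\bigl(|\partial_{x_2}\phi|_{L^2(|x_2|\leq\lambda)}^{1/2}+\lambda^{-1/2}|\phi|_{L^2(|x_2|\leq\lambda)}^{1/2}\bigr).
\]
Then the crucial algebraic identity
\[
(2+\eta)(\Gamma_6\phi+2\phi_{x_2})-x_2(\Gamma_4\phi+4\phi_\eta)=\bigl[(2+\xi)(2+\eta)-x_2^2\bigr]\phi_{x_2}
\]
converts $\partial_{x_2}\phi$ into $\Gamma$-derivatives of $\phi$ divided by a positive weight comparable to $(2+\xi)(2+\eta)$ on the interior set, producing the bound $|\phi_{x_2}|\lesssim [(2+\eta)^{-1/2}(2+\xi)^{-1/2}+(2+\xi)^{-1}]|\Gamma\phi|$. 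Combining this with the scaled Nirenberg inequality already gives $|\phi|\lesssim [(2+\eta)^{-1/4}(2+\xi)^{-1/4}+(2+\xi)^{-1/2}]\sum_{|k|\leq 1}|\Gamma^k\phi|_{L^2_{x_2}}$.

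\textbf{Passing to $L^2(D)$.} To upgrade the $L^2$ norm in $x_2$ alone to an $L^2(D)$ norm in $(\eta,x_2)$, I would perform a second one-dimensional Sobolev embedding in $\eta$. When $\xi\geq\eta$ this is direct. When $\xi\leq\eta$, the analogous identity
\[
[(2+\xi)(2+\eta)-x_2^2]\partial_\eta=\tfrac12[(2+\xi)(\Gamma_4+4\partial_\eta)-x_2(\Gamma_6+2\partial_{x_2})]
\]
yields $|\phi_\eta|\lesssim (2+\eta)^{-1/2}(2+\xi)^{-1/2}|\Gamma\phi|$, after which writing $\phi^2(\xi,\eta,x_2)=-\int_\eta^{\infty}\partial_\eta\phi^2\,d\eta$ and taking a Sobolev embedding in $x_2$ produces the desired $(2+\xi)^{-1/4}(2+\eta)^{-1/4}$ decay in the interior.

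\textbf{Boundary region.} Here the weight $(2+\xi)(2+\eta)-x_2^2$ degenerates, so converting $\partial_{x_2}$ into $\Gamma$ derivatives loses control. I would instead pass to polar coordinates $\sqrt{2+\eta}=r\cos\theta$, $x_2=r\sin\theta$, exploiting the identities
\[
r\partial_r=4\partial_\eta+\Gamma_4,\qquad \partial_\theta=\sqrt{2+\eta}\,(\partial_{x_2}-x_2\partial_\eta),
\]
so that both $r\partial_r$ and $\partial_\theta$ are expressible through $\Gamma$ and $\nabla$ operators and the Jacobian satisfies $d\eta\,dx_2=2\sqrt{2+\eta}\,r\,dr\,d\theta$. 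A Nirenberg-type inequality on the circle $\mathbb{S}^1$ in $\theta$ combined with integration from $r$ to $+\infty$ in the radial direction (using the support hypothesis) gives a pointwise bound for $|\phi|^2$ by an integral whose integrand is controlled by $|\phi|, |\phi_r|, |\phi_\theta|, |\phi_{r\theta}|$ in $L^2(\mathbb{S}^1)$. Reverting to the $(\eta,x_2)$ integration via the Jacobian and replacing $r\partial_r,\partial_\theta$ by $\Gamma$ and $\nabla$ operators then produces a factor $(2+\xi)^{-1/4}(2+\eta)^{-1/4}$ on the boundary region, matching the interior bound.

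The main obstacle will be the boundary region: the scaling argument that works in the interior breaks down exactly where $(2+\xi)(2+\eta)-x_2^2=0$, so the tailored polar coordinates $(\sqrt{2+\eta},x_2)\mapsto(r,\theta)$ are essential because only in these variables do $r\partial_r$ and $\partial_\theta$ identify cleanly with $\Gamma$-operators (not generic rotations), and the Jacobian contributes the extra $\sqrt{2+\eta}$ factor needed to distribute the decay symmetrically between $\xi$ and $\eta$. Combining both cases then yields \eqref{2.11}.
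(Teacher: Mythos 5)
Your proposal is correct and follows essentially the same route as the paper: the same splitting into the interior region $|x_2|\leq\tfrac14\sqrt{(2+\xi)(2+\eta)}$ (scaled Nirenberg inequality in $x_2$ plus the algebraic identity expressing $\phi_{x_2}$, and later $\phi_\eta$, through $\Gamma$-operators, then a one-dimensional Sobolev embedding in $\eta$ or $x_2$ depending on whether $\xi\geq\eta$ or $\xi\leq\eta$) and the boundary region handled via the polar coordinates $\sqrt{2+\eta}=r\cos\theta$, $x_2=r\sin\theta$ with the identities $r\partial_r=4\partial_\eta+\Gamma_4$, $\partial_\theta=\sqrt{2+\eta}(\partial_{x_2}-x_2\partial_\eta)$, the Jacobian $d\eta\,dx_2=2\sqrt{2+\eta}\,r\,dr\,d\theta$, and a Nirenberg inequality on $\mathbb{S}^1$ combined with radial integration using the compact support. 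No essential difference from the paper's argument.
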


\begin{proposition}\label{2}
Let $\phi$ has the compact support as (\ref{1.19}), we can obtain
\begin{equation}\label{2.21}
|\phi_{\eta}(\xi, \eta, x_2)|\lesssim (2+\eta )^{-\frac{3}{4}}(2+\xi )^{\frac{1}{4}} \sum\limits_{0\leq |k_1|+|k_2|\leq 2} |\Gamma^{k_1}\nabla^{k_2}\phi_{x_2}(\xi, \cdot)|_{L^2(D)}
\end{equation}
\begin{equation}\label{2.22}
|\phi_{\xi}(\xi, \eta, x_2)|\lesssim (2+\eta )^{\frac{1}{4}}(2+\xi )^{-\frac{3}{4}} \sum\limits_{0\leq |k_1|+|k_2|\leq 2} |\Gamma^{k_1}\nabla^{k_2}\phi_{x_2}(\xi, \cdot)|_{L^2(D)}
\end{equation}
\end{proposition}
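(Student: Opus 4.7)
My plan is to reduce \eqref{2.21} to Proposition \ref{1} using the commutator identity $\Gamma_4\phi = 2\eta\phi_\eta + x_2\phi_{x_2}$, equivalently $2\eta\phi_\eta = \Gamma_4\phi - x_2\phi_{x_2}$; the estimate \eqref{2.22} will then follow by the symmetric argument based on $\Gamma_5\phi = 2\xi\phi_\xi + x_2\phi_{x_2}$. The anticipated gain of the factor $(2+\eta)^{-1/2}(2+\xi)^{1/2}$ over a direct application of Proposition \ref{1} to $\phi_\eta$ should come from the weight $(2|\eta|)^{-1}$ produced by this identity together with the support bound $|x_2|\lesssim\sqrt{(2+\xi)(2+\eta)}$.

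First I would bound $|\Gamma_4\phi|$ pointwise by the right-hand side of \eqref{2.21}. Since $\Gamma_4\phi$ shares the compact $x_2$-support of $\phi$, Lemma \ref{lem1} in the form \eqref{2.8} applied to $\Gamma_4\phi$ yields
\[
|\Gamma_4\phi(\xi,\eta,x_2)| \leq \bigl(\sqrt{(3+\xi)(3+\eta)}-|x_2|\bigr)\sup_y\bigl|\partial_y \Gamma_4\phi(\xi,\eta,y)\bigr|.
\]
The commutator $[\partial_{x_2},\Gamma_4]=\partial_{x_2}$ gives $\partial_{x_2}\Gamma_4\phi = \Gamma_4\phi_{x_2} + \phi_{x_2}$, and applying Proposition \ref{1} to this quantity controls the supremum by $(2+\eta)^{-1/4}(2+\xi)^{-1/4}$ times the $\phi_{x_2}$-sum appearing in \eqref{2.21}. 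Combined with $\sqrt{(3+\xi)(3+\eta)}-|x_2|\lesssim\sqrt{(2+\xi)(2+\eta)}$ on the support, this produces the bound
\[
|\Gamma_4\phi| \lesssim (2+\eta)^{1/4}(2+\xi)^{1/4}\sum_{0\leq|k_1|+|k_2|\leq 2}|\Gamma^{k_1}\nabla^{k_2}\phi_{x_2}|_{L^2(D)}.
\]
The same bound holds for $|x_2\phi_{x_2}|$ by applying Proposition \ref{1} directly to $\phi_{x_2}$ and pulling out $|x_2|\lesssim\sqrt{(2+\xi)(2+\eta)}$.

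Substituting both bounds into $2\eta\phi_\eta = \Gamma_4\phi - x_2\phi_{x_2}$ and dividing by $2|\eta|$ in the range $\eta \geq 1$, where $|\eta|\sim 2+\eta$, yields the desired estimate $|\phi_\eta| \lesssim (2+\eta)^{-3/4}(2+\xi)^{1/4}\sum|\Gamma^{k_1}\nabla^{k_2}\phi_{x_2}|_{L^2(D)}$. For the remaining strip $-1 \leq \eta \leq 1$, the weight $(2+\eta)^{-3/4}$ is bounded, so it suffices to prove $|\phi_\eta| \lesssim (2+\xi)^{1/4}\sum|\Gamma^{k_1}\nabla^{k_2}\phi_{x_2}|_{L^2(D)}$. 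In this strip I would use the alternative identity $2x_2\phi_\eta = \Gamma_6\phi - \xi\phi_{x_2}$ on the sub-region $|x_2|\gtrsim \sqrt{2+\xi}$, and in the complementary sub-region invoke the non-degenerate operator identity $2[(2+\xi)(2+\eta)-x_2^2]\partial_\eta = (2+\xi)(\Gamma_4 + 4\partial_\eta) - x_2(\Gamma_6+2\partial_{x_2})$ from the proof of Proposition \ref{1}, whose weight satisfies $(2+\xi)(2+\eta)-x_2^2 \gtrsim (2+\xi)$ when $\eta$ is bounded.

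The hard part will be the careful bookkeeping of derivative counts, since every application of Proposition \ref{1} adds up to two $\Gamma$ or $\nabla$ operators, and I must verify that all resulting commutator terms such as $\Gamma^{k_1}\nabla^{k_2}(\Gamma_4\phi_{x_2})$ and $\Gamma^{k_1}\nabla^{k_2}(\Gamma_6\phi_{x_2})$ can be reduced, modulo lower-order commutators, to terms remaining within the budget $\sum_{0\leq|k_1|+|k_2|\leq 2}|\Gamma^{k_1}\nabla^{k_2}\phi_{x_2}|_{L^2(D)}$ allowed by Proposition \ref{2}.
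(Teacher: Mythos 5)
Your mechanism is the same one the paper uses: convert $\partial_\eta$ into $\Gamma$-operators plus $x_2\partial_{x_2}$, bound the $\Gamma$-terms by the pointwise Hardy-type inequality (\ref{2.8}) of Lemma \ref{lem1} (with the commutator $[\partial_{x_2},\Gamma_4]=\partial_{x_2}$) followed by Proposition \ref{1} applied to $\phi_{x_2}$, and bound $x_2\phi_{x_2}$ by pulling out $|x_2|\leq\sqrt{(2+\xi)(2+\eta)}$. The only real difference is the choice of identity, and it matters. The paper works with $2(2+\eta)\phi_\eta=(4\partial_\eta+\Gamma_4-x_2\partial_{x_2})\phi$, i.e.\ it adds $2\Gamma_2=4\partial_\eta$ to $\Gamma_4$, so the weight $2(2+\eta)\geq 2$ is nondegenerate on the whole support $\eta\geq -1$ and (\ref{2.21}) follows in one stroke, with (\ref{2.22}) obtained symmetrically from $2(2+\xi)\phi_\xi=(4\partial_\xi+\Gamma_5-x_2\partial_{x_2})\phi$; no strip analysis is needed. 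Your bare identity $2\eta\phi_\eta=\Gamma_4\phi-x_2\phi_{x_2}$ degenerates at $\eta=0$, which is what forces your case $-1\leq\eta\leq 1$, and there your argument as worded has a genuine wrinkle: in $2[(2+\xi)(2+\eta)-x_2^2]\partial_\eta=(2+\xi)(\Gamma_4+4\partial_\eta)-x_2(\Gamma_6+2\partial_{x_2})$ the advertised weight multiplies $\partial_\eta$ only because $4\partial_\eta$ is kept on the right; dividing by it reintroduces $|\phi_\eta|$ with an $O(1)$ constant that cannot be absorbed, and if you move that term to the left the honest weight is $(2+\xi)\eta-x_2^2$, which vanishes at $\eta=x_2=0$. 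The repair stays inside your own framework: since $2\partial_\eta=\Gamma_2$, every term on the right is again of the form $\Gamma\phi$ or $x_2\phi_{x_2}$, each of which your Hardy-plus-Sobolev step bounds by $(2+\eta)^{1/4}(2+\xi)^{1/4}$ times the norms in (\ref{2.21}), and this suffices in the strip where $(2+\eta)^{1/4}\lesssim 1$; indeed in the strip you can skip the identity and the $\Gamma_6$ sub-case entirely and bound $|\phi_\eta|=\tfrac12|\Gamma_2\phi|$ directly this way. Finally, the derivative-count worry you flag is real but shared with the paper: applying Proposition \ref{1} to $\Gamma_4\phi_{x_2}$ (or to $(4\partial_\eta+\Gamma_4)\phi_{x_2}$) produces $L^2$ norms with up to three operators on $\phi_{x_2}$, so strictly the sum in (\ref{2.21})--(\ref{2.22}) should allow $|k_1|+|k_2|\leq 3$; this does not affect the later applications, where ample derivatives are available.
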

\begin{proof}
\begin{displaymath}
2(2+\eta)\phi_{\eta}=(4\partial_{\eta}+\Gamma_4-x_2\partial_{x_2})\phi
\end{displaymath}
By Proposition \ref{1},
\begin{eqnarray*}
|x_2\partial_{x_2} \phi| & \leq & (2+\eta)^{\frac{1}{2}}(2+\xi)^{\frac{1}{2}} |\partial_{x_2} \phi| \\
& \leq &  (2+\eta)^{\frac{1}{4}}(2+\xi)^{\frac{1}{4}} \sum\limits_{0\leq |k_1|,|k_2|\leq 1} |\Gamma^{k_1}\nabla^{k_2}\phi_{x_2}|_{L^2(D)} \\
\end{eqnarray*}
By Lemma \ref{lem1} and Proposition \ref{1}, we get
\begin{eqnarray*}
(4\partial_{\eta}+\Gamma_4)\phi &=&  (2+\eta)^{\frac{1}{2}}(2+\xi)^{\frac{1}{2}} \sup\limits_{x_2} \left| \frac{(4\partial_{\eta}+\Gamma_4)\phi}{ (2+\eta)^{\frac{1}{2}}(2+\xi)^{\frac{1}{2}}-|x_2|} \right| \\
& \leq &  (2+\eta)^{\frac{1}{2}}(2+\xi)^{\frac{1}{2}} \sup\limits_{x_2} \left| (4\partial_{\eta}+\Gamma_4)\phi_{x_2} \right| \\
& \leq &  (2+\eta)^{\frac{1}{4}}(2+\xi)^{\frac{1}{4}} \sum\limits_{0\leq |k_1|+|k_2|\leq 2} |\Gamma^{k_1}\nabla^{k_2}\phi_{x_2}|_{L^2(D)}
\end{eqnarray*}
Combining the above estimates, we can get the proof of the estimate (\ref{2.21}). Using the similar procedures, we can get the estimate $(\ref{2.22})$.
\end{proof}

\begin{corollary}
Combining Proposition 1 and Proposition 2, we can get
\begin{equation}\label{2.23}
|\phi_{\eta}(\xi, \eta, x_2)|\lesssim (2+\eta )^{-\frac{1}{2}} \sum\limits_{0\leq |k_1|+|k_2+1|\leq 2} |\Gamma^{k_1}\nabla^{k_2}\phi(\xi, \cdot)|_{L^2(D)}
\end{equation}
\end{corollary}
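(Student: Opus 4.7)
The plan is to prove the corollary by a case split on $(\xi,\eta)$, applying Proposition \ref{2} in the region $\xi\le\eta$ and Proposition \ref{1} to the function $\phi_\eta$ in the complementary region $\xi\ge\eta$, so that the different prefactors $(2+\eta)^{-3/4}(2+\xi)^{1/4}$ and $(2+\eta)^{-1/4}(2+\xi)^{-1/4}$ both collapse to the common decay $(2+\eta)^{-1/2}$.

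In the region $\xi\le\eta$, I will invoke Proposition \ref{2} directly, and since $(2+\xi)^{1/4}\le(2+\eta)^{1/4}$ in this range, its prefactor is dominated by $(2+\eta)^{-1/2}$. The summands $\Gamma^{k_1}\nabla^{k_2}\phi_{x_2}$ with $|k_1|+|k_2|\le 2$ will be rewritten as $\Gamma^{k_1}\nabla^{k_2+1}\phi$, using $\partial_{x_2}\in\nabla$, which matches the sum $|k_1|+|k_2+1|\le 2$ of the corollary after reindexing.

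In the complementary region $\xi\ge\eta$, I will apply Proposition \ref{1} to $\phi_\eta$, which has the same compact support as $\phi$ by \eqref{1.19}. Here $(2+\xi)^{-1/4}\le(2+\eta)^{-1/4}$, so the prefactor $(2+\eta)^{-1/4}(2+\xi)^{-1/4}$ is again bounded by $(2+\eta)^{-1/2}$. Since $\partial_\eta\in\nabla$, each norm $|\Gamma^{k_1}\nabla^{k_2}\phi_\eta|_{L^2(D)}$ with $|k_1|,|k_2|\le 1$ becomes $|\Gamma^{k_1}\nabla^{k_2+1}\phi|_{L^2(D)}$, once more fitting the corollary's summation range.

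The only genuine task is to verify that the two reindexings are compatible and produce sums inside the single range $|k_1|+|k_2+1|\le 2$ advertised by the corollary; this is routine bookkeeping because in each case exactly one $\nabla$-derivative is absorbed into the $k_2$-shift. No new analytical ideas should be required beyond the two propositions just proved; the main (minor) obstacle is to make sure no spurious extra derivative sneaks in when Proposition \ref{1} is applied to $\phi_\eta$, so that the total order after absorption still fits into the claimed index range.
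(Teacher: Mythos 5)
Your proof is correct and is exactly the combination the paper intends: Proposition 2 in the region $\xi\le\eta$ and Proposition 1 applied to $\phi_\eta$ in the region $\xi\ge\eta$, with each prefactor collapsing to $(2+\eta)^{-1/2}$ (the paper itself gives no further argument than ``combining Proposition 1 and Proposition 2''). The only caveat is notational rather than analytic: after absorbing the extra derivative, the combined right-hand side genuinely involves up to two operators $\Gamma^{k_1}\nabla^{k_2}$ acting on first derivatives of $\phi$ (as delivered by the two propositions), so the corollary's printed index range $|k_1|+|k_2+1|\le 2$ must be read in that sense and not as allowing only one operator on $\phi$ itself.
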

%
%

%

\section{Stability of the traveling wave solution with the form $F(x_1 + t)$ in $\mathbb{R}^{1+(2+1)}$}
In the following we will prove the stability of the traveling wave solution with the form $F(x_1 + t)$, i.e. we take $a=0, b = 1$ in (\ref{1.10}). Then we can get the perturbed system of timelike extremal hypersurface in Minkowski space $\mathbb{R}^{1+(2+1)}$ as follows
\begin{equation}\label{3.1}
(\frac{u_t + F'}{\sqrt{1 - Q_0(u, u) - 2 F'(u_t - u_{x_1})}})_t - (\frac{u_{x_1}+ F'}{\sqrt{1 - Q_0(u, u) - 2 F'(u_t - u_{x_1})}})_{x_1}- (\frac{u_{x_2}}{\sqrt{1 - Q_0(u, u) - 2 F'(u_t - u_{x_1})}})_{x_2} = 0.
\end{equation}
We rewrite the system (\ref{3.1}) as the following form
\begin{equation}\label{3.2}
\Box u = - \frac{Q_0(u + F, Q_0(u, u) + 2F'(u_t - u_{x_1}))}{2(1 - Q_0(u,u)- 2F'(u_t - u_{x_1}))}.
\end{equation}
Under the weaker assumption than ($\tilde{H}_1$)
\begin{equation*}
H_1)\ \ \  | (\xi \frac{d}{d\xi})^{k_2} (\frac{d}{d\xi})^{k_1} F'(\xi)| \leq C_{k_1, k_2}(2+\xi )^{-1}
\end{equation*}
we will prove the stability of traveling wave solution $F(x_1 + t)$ to system (\ref{3.1}). The proof of the general traveling wave solution (\ref{1.10}) is similar to that of above theorem. In the end of this paper, we will point out the key difference in the proof.

We can rewrite system $(\ref{3.2})$ in coordinates $(\xi, \eta, x_2)$ as following
\begin{eqnarray}\label{3.3}
\Box u  - 4F'^2 u_{\eta\eta} &=& 4 \partial_{\xi\eta} u - \partial_{x_2x_2} u - 4F'^2 u_{\eta\eta}\\ \nonumber
&=& \frac{1}{2}(1-H)[Q_0(u, Q_0(u, u))+ 4F' Q_0(u, u_{\eta}) + 6u_{\xi}F'u_{\eta\eta}+ 8 F'' u_{\eta}^2] - 4F'^2u_{\eta\eta} H
\end{eqnarray}
where $H (\varsigma) = 1 + \frac{1}{1 - \varsigma}$.
Taking the operator $\Gamma^k$ to the above equation and denoting $u_k = \Gamma^k u$, we have
\begin{eqnarray}\label{3.4}
\Box u_k  - 4\Gamma^k(F'^2 u_{\eta\eta})&=& 4 \partial_{\xi\eta} u_k - \partial_{x_2x_2} u_k - 4\Gamma^k(F'^2 u_{\eta\eta})\\ \nonumber
&=& \frac{1}{2}(1-H)[Q_0(u, 2Q_0(u, u_k))+ 6F' Q_0(u, u_{k\eta})+ J_k] - 4\sum\limits_{k_1+k_2=k, k_2 \geq 1}\Gamma^{k_1}(F'^2u_{k\eta\eta})\Gamma^{k_2}H
\end{eqnarray}
with
$$J_k = \sum\limits_{k_1+k_2+k_3+k_4 = k, k_3 < k, k_4 < k} \Gamma^{k_1}(1-H)Q_0(\Gamma^{k_2}u, Q_0(\Gamma^{k_3}u, \Gamma^{k_4}u))+ \cdots + 4 \Gamma^k(F''u^2_\eta).$$

For proving main result, we will give the energy estimates. Define the higher order energy
\begin{equation}
\begin{split}
  E_s=&\sum_{|k|\leq s}\Big(\iiint\frac{u_{k\eta}^2}{(2+\xi)^{1+\frac1{10}}(2+\eta)^\frac1{10}}d\xi d\eta dx_2+\iiint\frac{u_{k\xi}^2}{(2+\eta)^{1+\frac1{10}}(2+\xi)^{\frac1{10}}}d\xi d\eta dx_2\\ \label{3.5}
  &+\iiint\frac{u_{k x_2}^2}{(2+\xi)^{1+\frac1{10}}(2+\eta)^{\frac1{10}}}d\xi d\eta dx_2+\iiint\frac{u_{k x_2}^2}{(2+\eta)^{1+\frac1{10}}(2+\xi)^{\frac1{10}}}d\xi d\eta dx_2\Big),
\end{split}
\end{equation}
and the lower order energies
\begin{align} \label{3.6}
  e_s=\sup_\xi\sum_{|l|\leq s-7}\iint(u_{l\eta}^2+u_{lx_2}^2)d\eta dx_2.
\end{align}
For getting the low order energy estimate, we also introduce the weighted lower derivative $L^\infty$ norm estimates
\begin{eqnarray}\label{3.7}
 \tilde{e}_s = (2+\xi)^{-\delta}\sum\limits_{|k|\leq [\frac{s-7}{2}]+1} |\Gamma^k u|.
\end{eqnarray}
where the parameter $\delta$ is an arbitrary positive small constant less than $\frac{3}{20}$.

\subsection{Higher order energy estimates}
In the subsection we will give the energy estimates terms by terms. For get our main result, firstly we introduce the weight function $B(\xi)$ satisfying
\begin{eqnarray*}
B'(\xi) =  F'^2(\xi).
\end{eqnarray*}
Then, we have
\begin{eqnarray*}
m \leq e^{-B(\xi)} \leq M.
\end{eqnarray*}
where $m, M$ are positive constants.

Firstly, multiplying $(2+\xi)^{-\frac{1}{10}}(2+\eta)^{-\frac{1}{10}}u_{k\eta}e^{-B(\xi)}$ to the equation (\ref{3.4}), we can get the left hand side term
\begin{eqnarray*}
&&\iiint (2+\xi)^{-\frac{1}{10}}(2+\eta)^{-\frac{1}{10}} e^{-B(\xi)}u_{k\eta}[\Box u_k - 4\Gamma^k(F'^2u_{\eta\eta})]d\xi d\eta dx_2 \\
&=& \iiint (2+\xi)^{-\frac{1}{10}}(2+\eta)^{-\frac{1}{10}} e^{-B(\xi)}u_{k\eta} [4 \partial_{\xi\eta} u_k - \partial_{x_2x_2} u_k - 4\Gamma^k(F'^2 u_{\eta\eta})] d\xi d\eta dx_2 \\
&=& \iiint (2+\xi)^{-\frac{1}{10}}(2+\eta)^{-\frac{1}{10}}e^{-B(\xi)}[2\frac{d}{d\xi} u^2_{k\eta} - \frac{d}{d x_2}(u_{k x_2} u_{k\eta}) + \frac{1}{2}\frac{d}{d\eta} u^2_{kx_2}- 2F'^2\frac{d}{d\eta}  u^2_{k\eta}] d\xi d\eta dx_2\\
&=& \iiint 2\frac{d}{d\xi}[(2+\xi)^{-\frac{1}{10}}(2+\eta)^{-\frac{1}{10}} e^{-B(\xi)}u^2_{k\eta}] + \frac{1}{2}\frac{d}{d\eta}[(2+\xi)^{-\frac{1}{10}}(2+\eta)^{-\frac{1}{10}} e^{-B(\xi)}u^2_{kx_2}] \\ \nonumber
&-&  \frac{1}{2}\frac{d}{dx_2}[(2+\xi)^{-\frac{1}{10}}(2+\eta)^{-\frac{1}{10}} e^{-B(\xi)}u_{k\eta}u_{kx_2}]  d\xi d\eta dx_2 
\\
&+& \frac{1}{5}\iiint (2+\xi)^{-\frac{11}{10}}(2+\eta)^{-\frac{1}{10}} e^{-B(\xi)}u^2_{k\eta}  d\xi d\eta dx_2 + \frac{1}{20} \iiint (2+\xi)^{-\frac{1}{10}}(2+\eta)^{-\frac{11}{10}}e^{-B(\xi)} u^2_{kx_2} d\xi d\eta dx_2
\end{eqnarray*}
 The right hand side terms are as follows
\begin{eqnarray*}
&&\iiint (2+\xi)^{-\frac{1}{10}}(2+\eta)^{-\frac{1}{10}}u_{k\eta} e^{-B(\xi)}(1-H)[Q_0(u, 2Q_0(u, u_k))+ 4F' Q_0(u, u_{k\eta}) + J_k\\
&&- 4\sum\limits_{k_1+k_2=k}\Gamma^{k_1}(F'^2u_{\eta\eta})\Gamma^{k_2}H] d\xi d\eta dx_2
\end{eqnarray*}
By the bound of $e^{-B(\xi)}$ and the decay property of $B'(\xi)$, without loss of generality, we can estimate the right hand terms above without the weighted function $e^{-B(\xi)}$. Then, the first term of the right hand side can be rewritten as
\begin{eqnarray*}
&&\iiint (2+\xi)^{-\frac{1}{10}}(2+\eta)^{-\frac{1}{10}}u_{k\eta} (1-H)Q_0(u, 2Q_0(u, u_k)) d\xi d\eta dx_2\\
 &=& \iiint (2+\xi)^{-\frac{1}{10}}(2+\eta)^{-\frac{1}{10}}u_{k\eta} (1-H)[4 u_{\xi}Q_{0\eta} + 4 u_{\eta}Q_{0\xi} - 2u_{x_2}Q_{0x_2}] d\xi d\eta dx_2\\
&=&\iiint 4\{\frac{d}{d\eta}[(2+\xi)^{-\frac{1}{10}}(2+\eta)^{-\frac{1}{10}}(1-H)u_{\xi}u_{k\eta} Q_0 ] - (2+\xi)^{-\frac{1}{10}}(2+\eta)^{-\frac{1}{10}}(1-H)u_{\xi\eta}u_{k\eta} Q_0\\ &-& [(2+\xi)^{-\frac{1}{10}}(2+\eta)^{-\frac{1}{10}}(1-H)u_{k\eta}]_{\eta} u_{\xi} Q_0\}
+ 4\{\frac{d}{d\xi}[(2+\xi)^{-\frac{1}{10}}(2+\eta)^{-\frac{1}{10}}(1-H)u_{\eta}u_{k\eta} Q_0]  \\
&-& (2+\xi)^{-\frac{1}{10}}(2+\eta)^{-\frac{1}{10}}(1-H)u_{\xi\eta}u_{k\eta} Q_0 - [(2+\xi)^{-\frac{1}{10}}(2+\eta)^{-\frac{1}{10}}(1-H)u_{k\eta}]_{\xi}u_{\eta} Q_0\} \\
&-& 2 \{\frac{d}{d x_2}[(2+\xi)^{-\frac{1}{10}}(2+\eta)^{-\frac{1}{10}}(1-H)u_{x_2}u_{k\eta} Q_0]  - (2+\xi)^{-\frac{1}{10}}(2+\eta)^{-\frac{1}{10}}(1-H)u_{x_2x_2}u_{k\eta} Q_0 \\
&-& [(2+\xi)^{-\frac{1}{10}}(2+\eta)^{-\frac{1}{10}}(1-H)u_{k\eta}]_{x_2}u_{x_2} Q_0\} d\xi d\eta dx_2\\
&=&  \iiint 4 \frac{d}{d\eta}[(2+\xi)^{-\frac{1}{10}}(2+\eta)^{-\frac{1}{10}}(1-H)u_{\xi}u_{k\eta} Q_0 ] + 4 \frac{d}{d\xi}[(2+\xi)^{-\frac{1}{10}}(2+\eta)^{-\frac{1}{10}}(1-H)u_{\eta}u_{k\eta} Q_0]d\xi d\eta dx_2 \\
&-& 2 \frac{d}{d x_2}[(2+\xi)^{-\frac{1}{10}}(2+\eta)^{-\frac{1}{10}}(1-H)u_{x_2}u_{k\eta} Q_0]d\xi d\eta dx_2
- 2 \iiint Q_0((2+\xi)^{-\frac{1}{10}}(2+\eta)^{-\frac{1}{10}}(1-H)u_{k\eta}, u)Q_0(u_k,u)d\xi d\eta dx_2\\
 &-&  2\iiint (2+\xi)^{-\frac{1}{10}}(2+\eta)^{-\frac{1}{10}}(1-H)u_{k\eta}(\Box u + 4F'^2u_{\eta\eta}) Q_0(u_k,u)d\xi d\eta dx_2.
\end{eqnarray*}
The second term of the right hand side is
\begin{eqnarray*}
 && 2\iiint (2+\xi)^{-\frac{1}{10}}(2+\eta)^{-\frac{1}{10}}(1-H)F' Q_0(u, u^2_{k\eta}) \\
&=& 2\iiint (2+\xi)^{-\frac{1}{10}}(2+\eta)^{-\frac{1}{10}}(1-H)F'[2 u_{\xi}(u^2_{k\eta})_{\eta} + 2 u_{\eta}(u^2_{k\eta})_{\xi} -  u_{x_2}(u^2_{k\eta})_{x_2} ]d\xi d\eta dx_2\\
&=& \iiint 4 \frac{d}{d\eta}[(2+\xi)^{-\frac{1}{10}}(2+\eta)^{-\frac{1}{10}}(1-H)F' u_{\xi}u^2_{k\eta}]- 4\frac{d}{d\eta}[(2+\xi)^{-\frac{1}{10}}(2+\eta)^{-\frac{1}{10}}(1-H)F' u_{\xi}]u^2_{k\eta}\\
&+& 4 \frac{d}{d\xi}[(2+\xi)^{-\frac{1}{10}}(2+\eta)^{-\frac{1}{10}}(1-H)F' u_{\eta}u^2_{k\eta}]- 4\frac{d}{d\xi}[(2+\xi)^{-\frac{1}{10}}(2+\eta)^{-\frac{1}{10}}(1-H)F' u_{\eta}]u^2_{k\eta}\\
&-& 2 \frac{d}{dx_2}[(2+\xi)^{-\frac{1}{10}}(2+\eta)^{-\frac{1}{10}}(1-H)F' u_{x_2}u^2_{k\eta}] + 2\frac{d}{d x_2}[(2+\xi)^{-\frac{1}{10}}(2+\eta)^{-\frac{1}{10}}(1-H)F' u_{x_2}]u^2_{k\eta} d\xi d\eta dx_2\\
&=& \iiint 4 \frac{d}{d\eta}[(2+\xi)^{-\frac{1}{10}}(2+\eta)^{-\frac{1}{10}}(1-H)F' u_{\xi}u^2_{k\eta}] + 4 \frac{d}{d\xi}[(2+\xi)^{-\frac{1}{10}}(2+\eta)^{-\frac{1}{10}}(1-H)F' u_{\eta}u^2_{k\eta}]\\
&-& 2 \frac{d}{dx_2}[(2+\xi)^{-\frac{1}{10}}(2+\eta)^{-\frac{1}{10}}(1-H)F' u_{x_2}u^2_{k\eta}] - 2[(2+\xi)^{-\frac{1}{10}}(2+\eta)^{-\frac{1}{10}}(1-H)F'](\Box u + 4F'^2u_{\eta\eta}) u^2_{k\eta} \\
&-& 4[(2+\xi)^{-\frac{1}{10}}(2+\eta)^{-\frac{1}{10}}(1-H)F']_{\eta}u_{\xi} u^2_{k\eta}- 4[(2+\xi)^{-\frac{1}{10}}(2+\eta)^{-\frac{1}{10}}(1-H)F']_{\xi} u_{\eta}u^2_{k\eta}\\
&+& 2[(2+\xi)^{-\frac{1}{10}}(2+\eta)^{-\frac{1}{10}}(1-H)F']_{x_2} u_{x_2}u^2_{k\eta} d\xi d\eta dx_2.
\end{eqnarray*}
Noting
\begin{eqnarray*}
 &&Q_0((2+\xi)^{-\frac{1}{10}}(2+\eta)^{-\frac{1}{10}}(1-H)u_{k\eta}, u)Q_0(u_k,u) \\
 &=& (2+\xi)^{-\frac{1}{10}}(2+\eta)^{-\frac{1}{10}}(1-H)Q_0(u_{k\eta},u)Q_0(u_k,u) + u_{k\eta} Q_0((2+\xi)^{-\frac{1}{10}}(2+\eta)^{-\frac{1}{10}}(1-H), u)Q_0(u_k,u)\\
 &=& \frac{1}{2}(2+\xi)^{-\frac{1}{10}}(2+\eta)^{-\frac{1}{10}}(1-H)\frac{d}{d\eta}Q^2_0(u_k,u) - (2+\xi)^{-\frac{1}{10}}(2+\eta)^{-\frac{1}{10}}(1-H)Q_0(u_{k},u_{\eta})Q_0(u_k,u)  \\
 &+& u_{k\eta} Q_0((2+\xi)^{-\frac{1}{10}}(2+\eta)^{-\frac{1}{10}}(1-H), u)Q_0(u_k,u)
\end{eqnarray*}
Then, the right hand side term as follows
\begin{eqnarray*}\nonumber
&&\iiint (2+\xi)^{-\frac{1}{10}}(2+\eta)^{-\frac{1}{10}}u_{k\eta} (1-H)[Q_0(u, 2Q_0(u, u_k))+ 4F' Q_0(u, u_{k\eta}) + J_k - 4\sum\limits_{k_1+k_2=k}\Gamma^{k_1}(F'^2u_{k\eta\eta})\Gamma^{k_2}H] d\eta dx_2\\ \nonumber
&& = \iiint 4 \frac{d}{d\eta}[(2+\xi)^{-\frac{1}{10}}(2+\eta)^{-\frac{1}{10}}(1-H)u_{\xi}u_{k\eta} Q_0 ] + 4 \frac{d}{d\xi}[(2+\xi)^{-\frac{1}{10}}(2+\eta)^{-\frac{1}{10}}(1-H)u_{\eta}u_{k\eta} Q_0] \\
&&- 2 \frac{d}{d x_2}[(2+\xi)^{-\frac{1}{10}}(2+\eta)^{-\frac{1}{10}}(1-H)u_{x_2}u_{k\eta} Q_0]d\xi d\eta dx_2\\
&&-  \iiint \frac{d}{d\eta} [(2+\xi)^{-\frac{1}{10}}(2+\eta)^{-\frac{1}{10}}(1-H)Q^2_0(u_k,u)] - [(2+\xi)^{-\frac{1}{10}}(2+\eta)^{-\frac{1}{10}}(1-H)]_{\eta}Q^2_0(u_k,u)\\
&& - 2(2+\xi)^{-\frac{1}{10}}(2+\eta)^{-\frac{1}{10}}(1-H)Q_0(u_{k},u_{\eta})Q_0(u_k,u) + 2u_{k\eta} Q_0((2+\xi)^{-\frac{1}{10}}(2+\eta)^{-\frac{1}{10}}(1-H), u)Q_0(u_k,u)d\xi d\eta dx_2\\  &&- 2 \iiint (2+\xi)^{-\frac{1}{10}}(2+\eta)^{-\frac{1}{10}}(1-H)u_{k\eta}(\Box u + 4F'^2u_{\eta\eta}) Q_0(u_k,u)d\xi d\eta dx_2\\
&&+\iiint 4 \frac{d}{d\eta}[(2+\xi)^{-\frac{1}{10}}(2+\eta)^{-\frac{1}{10}}(1-H)F' u_{\xi}u^2_{k\eta}] + 4 \frac{d}{d\xi}[(2+\xi)^{-\frac{1}{10}}(2+\eta)^{-\frac{1}{10}}(1-H)F' u_{\eta}u^2_{k\eta}]\\
&&- 2 \frac{d}{dx_2}[(2+\xi)^{-\frac{1}{10}}(2+\eta)^{-\frac{1}{10}}(1-H)F' u_{x_2}u^2_{k\eta}] - 2[(2+\xi)^{-\frac{1}{10}}(2+\eta)^{-\frac{1}{10}}(1-H)F'](\Box u + 4F'^2u_{\eta\eta}) u^2_{k\eta}
\end{eqnarray*}
\begin{eqnarray*}
&&- 4[(2+\xi)^{-\frac{1}{10}}(2+\eta)^{-\frac{1}{10}}(1-H)F']_{\eta}u_{\xi} u^2_{k\eta}- 4[(2+\xi)^{-\frac{1}{10}}(2+\eta)^{-\frac{1}{10}}(1-H)F']_{\xi} u_{\eta}u^2_{k\eta}\\
&&+ 2[(2+\xi)^{-\frac{1}{10}}(2+\eta)^{-\frac{1}{10}}(1-H)F']_{x_2} u_{x_2}u^2_{k\eta} d\xi d\eta dx_2\\
&&+ \iiint (2+\xi)^{-\frac{1}{10}}(2+\eta)^{-\frac{1}{10}} u_{k\eta} (1-H)J_k d\xi d\eta dx_2 \\
&&- 4\sum\limits_{k_1+k_2=k}\iiint (2+\xi)^{-\frac{1}{10}}(2+\eta)^{-\frac{1}{10}} u_{k\eta}\Gamma^{k_1}(F'^2u_{\eta\eta})\Gamma^{k_2}H d\xi d\eta dx_2.
\end{eqnarray*}
Finally
\begin{eqnarray}\nonumber
&&\iiint (2+\xi)^{-\frac{11}{10}}(2+\eta)^{-\frac{1}{10}} u^2_{k\eta}  d\xi d\eta dx_2 + \iiint (2+\xi)^{-\frac{1}{10}}(2+\eta)^{-\frac{11}{10}} u^2_{kx_2} d\xi d\eta dx_2 \\ \nonumber
&& \lesssim C\varepsilon  + \iiint  [(2+\xi)^{-\frac{1}{10}}(2+\eta)^{-\frac{1}{10}}(1-H)]_{\eta}Q^2_0(u_k,u) - 2(2+\xi)^{-\frac{1}{10}}(2+\eta)^{-\frac{1}{10}}(1-H)Q_0(u_{k},u_{\eta})Q_0(u_k,u)\\ \nonumber
&&+ 2u_{k\eta} Q_0((2+\xi)^{-\frac{1}{10}}(2+\eta)^{-\frac{1}{10}}(1-H), u)Q_0(u_k,u)d\xi d\eta dx_2  \\ \nonumber
&&+ 2 \iiint (2+\xi)^{-\frac{1}{10}}(2+\eta)^{-\frac{1}{10}}(1-H)u_{k\eta}(\Box u + 4F'^2u_{\eta\eta}) Q_0(u_k,u)d\xi d\eta dx_2
\\ \nonumber
&&+\iiint [(2+\xi)^{-\frac{1}{10}}(2+\eta)^{-\frac{1}{10}}(1-H)F'](\Box u + 4F'^2u_{\eta\eta}) u^2_{k\eta} - 4[(2+\xi)^{-\frac{1}{10}}(2+\eta)^{-\frac{1}{10}}(1-H)F']_{\eta}u_{\xi} u^2_{k\eta}\\ \nonumber
&&- 4[(2+\xi)^{-\frac{1}{10}}(2+\eta)^{-\frac{1}{10}}(1-H)F']_{\xi} u_{\eta}u^2_{k\eta}+ 2[(2+\xi)^{-\frac{1}{10}}(2+\eta)^{-\frac{1}{10}}(1-H)F']_{x_2} u_{x_2}u^2_{k\eta} d\xi d\eta dx_2\\ \nonumber
&&+ \iiint (2+\xi)^{-\frac{1}{10}}(2+\eta)^{-\frac{1}{10}} u_{k\eta} (1-H)J_k d\xi d\eta dx_2  \\ \nonumber
&&- 4\sum\limits_{k_1+k_2=k, k_2 \geq 1}\iiint (2+\xi)^{-\frac{1}{10}}(2+\eta)^{-\frac{1}{10}} u_{k\eta}\Gamma^{k_1}(F'^2u_{\eta\eta})\Gamma^{k_2}Hd\xi d\eta dx_2\\  \label{3.9}
&&\doteq C\varepsilon  +  A_1 + A_2+ A_3 + A_4 + A_5.
\end{eqnarray}
In the following we will estimate the above equation terms by terms. Firstly, we will deal with the term $A_1$. Noting Proposition 1 and Proposition 2,
\begin{eqnarray}\nonumber
&&\iiint [(2+\xi)^{-\frac{1}{10}}(2+\eta)^{-\frac{1}{10}}(1-H)]_{\eta}Q^2_0(u_k,u)d\xi d\eta dx_2\\ \nonumber
&&= \iiint-\frac{1}{10}(2+\xi)^{-\frac{1}{10}}(2+\eta)^{-\frac{11}{10}}(1-H)Q^2_0(u_k,u)+ (2+\xi)^{-\frac{1}{10}}(2+\eta)^{-\frac{1}{10}}H_{\eta}Q^2_0(u_k,u)d\xi d\eta dx_2\\ \nonumber
&&\lesssim 
\iiint(2+\xi)^{-\frac{1}{10}}(2+\eta)^{-\frac{1}{10}}|H_{\eta}|[u^2_{\xi}u^2_{k\eta} + u^2_{x_2}u^2_{kx_2} + u^2_{\eta}u^2_{k\xi} ]d\xi d\eta dx_2\\ \nonumber
&& \lesssim 
\iiint(2+\xi)^{-\frac{1}{10}}(2+\eta)^{-\frac{1}{10}}|(Q_{0\eta}+4F'(\xi)u_{\eta\eta})|[u^2_{\xi}u^2_{k\eta} + u^2_{x_2}u^2_{kx_2} + u^2_{\eta}u^2_{k\xi} ]d\xi d\eta dx_2\\ \label{3.10}
&& \lesssim (1+e_s)e_sE_s + \iiint(2+\xi)^{-\frac{1}{10}}(2+\eta)^{-\frac{1}{10}}|(Q_{0\eta}+4F'(\xi)u_{\eta\eta})|[u^2_{\xi}u^2_{k\eta} + u^2_{x_2}u^2_{kx_2} + u^2_{\eta}u^2_{k\xi} ]d\xi d\eta dx_2.
\end{eqnarray}
Noting Proposition 1 and Proposition 2, we can get
\begin{eqnarray*}
|\Gamma u|\lesssim (2+\xi)^{\frac{1}{2}}(2+\eta)^{\frac{1}{2}}\frac{|\Gamma u|}{\sqrt{(2+\xi)(2+\eta)}} \lesssim (2+\xi)^{\frac{1}{2}}(2+\eta)^{\frac{1}{2}}|\Gamma u_{x_2}| \lesssim (2+\xi)^{\frac{1}{4}}(2+\eta)^{\frac{1}{4}}e_s^{\frac{1}{2}}.
\end{eqnarray*}
By Lemma 3.2 and (\ref{3.6}),
\begin{eqnarray}\nonumber
&&\iiint(2+\xi)^{-\frac{1}{10}}(2+\eta)^{-\frac{1}{10}}Q_{0\eta}[u^2_{\xi}u^2_{k\eta} + u^2_{x_2}u^2_{kx_2} + u^2_{\eta}u^2_{k\xi} ]d\xi d\eta dx_2\\ \nonumber
&&\leq \iiint(2+\xi)^{-\frac{11}{10}}(2+\eta)^{-\frac{1}{10}}(|\Gamma u_{\eta}||\nabla u|+ |\Gamma u||\nabla u_{\eta}|)[u^2_{\xi}u^2_{k\eta} + u^2_{x_2}u^2_{kx_2} + u^2_{\eta}u^2_{k\xi} ]d\xi d\eta dx_2\\ \nonumber
&&\lesssim  \iiint(2+\xi)^{-\frac{11}{10}}(2+\eta)^{-\frac{1}{10}} (|\Gamma u_{\eta}||\nabla u|+ |\Gamma u||\nabla u_{\eta}|)u^2_{\xi}u^2_{k\eta} d\xi d\eta dx_2 \\ \nonumber
&& + \iiint(2+\xi)^{-\frac{11}{10}}(2+\eta)^{-\frac{1}{10}}(|\Gamma u_{\eta}||\nabla u|+ |\Gamma u||\nabla u_{\eta}|)[u^2_{x_2}u^2_{kx_2} + u^2_{\eta}u^2_{k\xi} ]d\xi d\eta dx_2\\ \nonumber
&& \lesssim  \iiint(2+\xi)^{-\frac{11}{10}}(2+\eta)^{-\frac{1}{10}} |\Gamma u||\nabla u_{\eta}|u^2_{\xi}u^2_{k\eta}d\xi d\eta dx_2 + e^2_s E_s \\ \nonumber
&& \lesssim  \iiint(2+\xi)^{-\frac{11}{10}}(2+\eta)^{-\frac{1}{10}} (2+\xi)^{\frac{1}{4}}(2+\eta)^{\frac{1}{4}}e^{\frac{1}{2}}_s (2+\eta)^{-\frac{1}{2}}e^{\frac{1}{2}}_s (2+\xi)^{-\frac{3}{4}}(2+\eta)^{\frac{1}{4}}e^{\frac{1}{2}}_s  (2+\xi)^{\delta} \tilde{e}_s u^2_{k\eta}d\xi d\eta dx_2 + e^2_s E_s\\  \label{3.11}
%
&&\lesssim e^{\frac{3}{2}}_s\tilde{e}_s E_s + e^2_s E_s.
 \end{eqnarray}
 Noting the assumption of $H_1)$ and Proposition 2,
 \begin{eqnarray} \nonumber
&&4 \iiint(2+\xi)^{-\frac{1}{10}}(2+\eta)^{-\frac{1}{10}}|F'(\xi)u_{\eta\eta}|[u^2_{\xi}u^2_{k\eta} + u^2_{x_2}u^2_{kx_2} + u^2_{\eta}u^2_{k\xi} ]d\xi d\eta dx_2\\ \nonumber
&&\lesssim  \iiint(2+\xi)^{-\frac{11}{10}}(2+\eta)^{-\frac{1}{10}}|u_{\eta\eta}|[u^2_{\xi}u^2_{k\eta} + u^2_{x_2}u^2_{kx_2} + u^2_{\eta}u^2_{k\xi} ]d\xi d\eta dx_2\\ \nonumber
&&\lesssim \iiint(2+\xi)^{-\frac{13}{5}}(2+\eta)^{-\frac{1}{10}}e^{\frac{3}{2}}_su^2_{k\eta}d\xi d\eta dx_2 + \iiint(2+\xi)^{-\frac{11}{10}}(2+\eta)^{-\frac{1}{10}}u_{\eta\eta}[u^2_{x_2}u^2_{kx_2} + u^2_{\eta}u^2_{k\xi} ]d\xi d\eta dx_2\\ \label{3.12}
&& \lesssim e_s^{\frac{3}{2}} + \iiint(2+\xi)^{-\frac{11}{10}}(2+\eta)^{-\frac{1}{10}}|u_{\eta\eta}|[u^2_{x_2}u^2_{kx_2} + u^2_{\eta}u^2_{k\xi} ]d\xi d\eta dx_2.
\end{eqnarray}
 By Proposition 1 and Proposition 2, we can get
\begin{eqnarray}\nonumber
&&\iiint(2+\xi)^{-\frac{11}{10}}(2+\eta)^{-\frac{1}{10}}|u_{\eta\eta}|u^2_{x_2}u^2_{kx_2}d\xi d\eta dx_2\\ \nonumber
&&\leq \iiint(2+\xi)^{-\frac{37}{20}}(2+\eta)^{-\frac{17}{20}}e_s^{\frac{3}{2}} u^2_{kx_2}d\xi d\eta dx_2\lesssim e_s^{\frac{3}{2}} E_s.
\end{eqnarray}
Combining with Corollary 1, we also have
\begin{eqnarray}\nonumber
&&\iiint(2+\xi)^{-\frac{11}{10}}(2+\eta)^{-\frac{1}{10}}|u_{\eta\eta}| u^2_{\eta}u^2_{k\xi} d\xi d\eta dx_2\\ \label{3.13}
&&\lesssim e_s^{\frac{3}{2}}\iiint(2+\xi)^{-\frac{8}{5}}(2+\eta)^{-\frac{11}{10}} u^2_{k\xi}d\xi d\eta dx_2 \lesssim e_s^{\frac{3}{2}} E_s.
\end{eqnarray}
Therefore, we can get the first term of $A_1$
 \begin{eqnarray}\label{3.14}
&&\iiint [(2+\xi)^{-\frac{1}{10}}(2+\eta)^{-\frac{1}{10}}(1-H)]_{\eta}Q^2_0(u_k,u)d\xi d\eta dx_2 \lesssim (e_s + e_s^{\frac{3}{2}} + e_s^{\frac{3}{2}}\tilde{e}_s + e^2_s) E_s.
\end{eqnarray}
We will continue to estimate the second term of $A_1$. Using Proposition 1 and Proposition 2,
\begin{eqnarray}\nonumber
&&\iiint 2(2+\xi)^{-\frac{1}{10}}(2+\eta)^{-\frac{1}{10}}(1-H)Q_0(u_{k},u_{\eta})Q_0(u_k,u)d\xi d\eta dx_2\\ \nonumber
&& \lesssim (1+ e_s)\iiint (2+\xi)^{-\frac{1}{10}}(2+\eta)^{-\frac{1}{10}}[|u_\xi||u_{\eta\xi}|u_{k\eta}^2+
|u_\xi||u_{k\eta}|(|u_{\eta\eta}||u_{k\xi}|+|u_{\eta x_2}||u_{k x_2}|)]d\xi d\eta dx_2\\ \label{3.15}
&& +(1+ e_s)\iiint (2+\xi)^{-\frac{1}{10}}(2+\eta)^{-\frac{1}{10}}[u_{\eta\xi}^2u_{k\eta}^2+
(u_{\eta\eta}^2+u_{\eta}^2)u_{k\xi}^2+(u_{\eta x_2}^2+u_{x_2}^2)u_{k x_2}^2]d\xi d\eta dx_2.
\end{eqnarray}
Noting Proposition 2 and Corollary 1, we have
\begin{eqnarray}\nonumber
|u_\xi||u_{\eta\xi}|\lesssim (2+\xi)^{-\frac{3}{4}}(2+\eta)^{\frac{1}{4}}(2+\xi)^{-\frac{1}{4}}(2+\eta)^{-\frac{1}{4}}e_s.
\end{eqnarray}
Therefore, we can get
\begin{equation} \label{3.16}
 \iiint (2+\xi)^{-\frac{1}{10}}(2+\eta)^{-\frac{1}{10}}|u_{\xi}u_{\xi\eta}|u^2_{k\eta} d\xi d\eta dx_2\lesssim e_sE_s.
\end{equation}
Noting Proposition 1, we have
\begin{eqnarray}\nonumber
&&\iiint (2+\xi)^{-\frac{1}{10}}(2+\eta)^{-\frac{1}{10}}u^2_{x_2}u^2_{kx_2} d\xi d\eta dx_2 \\ \nonumber
&&\lesssim e_s\iiint (2+\xi)^{-\frac{3}{5}}(2+\eta)^{-\frac{3}{5}}u^2_{kx_2} d\xi d\eta dx_2\\ \label{3.17}
&&\lesssim e_s\iiint [(2+\xi)^{-\frac{1}{10}}(2+\eta)^{-\frac{11}{10}}+(2+\xi)^{-\frac{11}{10}}(2+\eta)^{-\frac{1}{10}}]u^2_{kx_2} d\xi d\eta dx_2\lesssim e_s E_s.
\end{eqnarray}
We note
\begin{eqnarray*}
&&Q_0((2+\xi)^{-\frac{1}{10}}(2+\eta)^{-\frac{1}{10}}(1-H), u)Q_0(u_k,u)\\
&&= \{2[(2+\xi)^{-\frac{1}{10}}(2+\eta)^{-\frac{1}{10}}(1-H)]_{\xi} u_{\eta} + 2[(2+\xi)^{-\frac{1}{10}}(2+\eta)^{-\frac{1}{10}}(1-H)]_{\eta} u_{\xi} \\
&&- [(2+\xi)^{-\frac{1}{10}}(2+\eta)^{-\frac{1}{10}}(1-H)]_{x_2} u_{x_2}\}[2 u_{k\xi}u_{\eta} + 2 u_{k\eta}u_{\xi}- u_{kx_2}u_{x_2}]\\
&& = \{-\frac{1}{5}[(2+\xi)^{-\frac{11}{10}}(2+\eta)^{-\frac{1}{10}}(1-H) + 2(2+\xi)^{-\frac{1}{10}}(2+\eta)^{-\frac{1}{10}}H_{\xi}] u_{\eta} \\
&&+ 2[-\frac{1}{5}(2+\xi)^{-\frac{11}{10}}(2+\eta)^{-\frac{1}{10}}(1-H)] + (2+\xi)^{-\frac{1}{10}}(2+\eta)^{-\frac{1}{10}}H_{\eta}] u_{\xi} \\
&&- [(2+\xi)^{-\frac{1}{10}}(2+\eta)^{-\frac{1}{10}}H_{x_2} u_{x_2}\}[2 u_{k\xi}u_{\eta} + 2 u_{k\eta}u_{\xi}- u_{kx_2}u_{x_2}].
\end{eqnarray*}
Noting proposition 1 and corollary 1, the last term of $A_1$
\begin{eqnarray} \nonumber
&&\iiint 2u_{k\eta} Q_0((2+\xi)^{-\frac{1}{10}}(2+\eta)^{-\frac{1}{10}}(1-H), u)Q_0(u_k,u)d\xi d\eta dx_2\\ \nonumber
&&= \iiint 2u_{k\eta}[2u_{k\eta}u_{\xi} + 2 u_{k\xi}u_{\eta} - u_{kx_2}u_{x_2}]\{-\frac{1}{5}[(2+\xi)^{-\frac{11}{10}}(2+\eta)^{-\frac{1}{10}}(1-H) + 2(2+\xi)^{-\frac{1}{10}}(2+\eta)^{-\frac{1}{10}}H_{\xi}] u_{\eta} \\ \nonumber
&&+ 2[-\frac{1}{5}(2+\xi)^{-\frac{1}{10}}(2+\eta)^{-\frac{11}{10}}(1-H)] + (2+\xi)^{-\frac{1}{10}}(2+\eta)^{-\frac{1}{10}}H_{\eta}] u_{\xi} - [(2+\xi)^{-\frac{1}{10}}(2+\eta)^{-\frac{1}{10}}H_{x_2} u_{x_2}\}\\ \nonumber
&&\lesssim \iiint [u^2_{k\eta}|u_{\xi}| + |u_{k\eta}u_{k\xi}||u_{\eta}| +|u_{k\eta} u_{kx_2}||u_{x_2}|]\{(2+\xi)^{-\frac{1}{10}}(2+\eta)^{-\frac{1}{10}}|H_{\xi}u_{\eta}+H_{\eta}u_{\xi}+H_{x_2} u_{x_2}|\\ \nonumber
&&+[(2+\xi)^{-\frac{11}{10}}(2+\eta)^{-\frac{1}{10}}|u_{\eta}|+(2+\xi)^{-\frac{1}{10}}(2+\eta)^{-\frac{11}{10}}|u_{\xi}|](1-H)]\}\\ \nonumber
&&\lesssim e_s E_s + \iiint  (2+\xi)^{-\frac{11}{10}}(2+\eta)^{-\frac{1}{10}}|1-H||(u_{\eta}+u_{x_2})u_{\eta}| u^2_{k\xi} d\xi d\eta dx_2 \\ \nonumber
&&+ \iiint (2+\xi)^{-\frac{1}{10}}(2+\eta)^{-\frac{11}{10}}(1-H)|(u_{\xi}+u_{\eta}+u_{x_2})||u_{\xi}| u^2_{k\eta} d\xi d\eta dx_2\\ \nonumber
&&+\iiint [u^2_{k\eta}|u_{\xi}| + (u^2_{k\eta}+u^2_{k\xi})|u_{\eta}| +(u^2_{k\eta}+ u^2_{kx_2})|u_{x_2}|]\{(2+\xi)^{-\frac{1}{10}}(2+\eta)^{-\frac{1}{10}}(H_{\xi}u_{\eta}+H_{\eta}u_{\xi}+H_{x_2} u_{x_2})\}d\xi d\eta dx_2\\ \nonumber
&&\lesssim\iiint (2+\xi)^{-\frac{1}{10}}(2+\eta)^{-\frac{1}{10}}[u^2_{k\eta}|(u_{\xi}+u_{\eta}+u_{x_2})| + u^2_{k\xi}|u_{\eta}| + u^2_{kx_2}|u_{x_2}|]|H_{\xi}u_{\eta}+H_{\eta}u_{\xi}+H_{x_2} u_{x_2}|d\xi d\eta dx_2\\ \label{3.18}
&&+ e_s E_s.
\end{eqnarray}
Using the similar method as (\ref{3.5})-(\ref{3.16}) in the above inequality, we can get
\begin{eqnarray}\nonumber
&&\iiint 2u_{k\eta} Q_0((2+\xi)^{-\frac{1}{10}}(2+\eta)^{-\frac{1}{10}}(1-H), u)Q_0(u_k,u)d\xi d\eta dx_2\\  \label{3.19}
&&\lesssim \varepsilon +  e_s E_s + e^{\frac{3}{2}}_s E_s + e^2_s E_s.
\end{eqnarray}

In the following we will estimate the term $A_2$.
\begin{eqnarray} \nonumber
&&\iiint (2+\xi)^{-\frac{1}{10}}(2+\eta)^{-\frac{1}{10}}(1-H)u_{k\eta}(\Box u + 4F'^2u_{\eta\eta}) Q_0(u_k,u)d\xi d\eta dx_2\\ \nonumber
&&\leq  \iiint (2+\xi)^{-\frac{1}{10}}(2+\eta)^{-\frac{1}{10}}|u_{k\eta}||[Q_0(u, Q_0(u, u))+ 4F' Q_0(u, u_{\eta}) + 4 F'' u_{\eta}^2]|[|u_{k\xi}u_{\eta}| + |u_{k\eta}u_{\xi}| + |u_{kx_2}u_{x_2}|]d\xi d\eta dx_2\\ \nonumber
&&+ \iiint (2+\xi)^{-\frac{1}{10}}(2+\eta)^{-\frac{1}{10}}|u_{k\eta}||F'^2 |u_{\eta\eta}|[|u_{k\xi}u_{\eta}| + |u_{k\eta}u_{\xi}| + |u_{kx_2}u_{x_2}|]d\xi d\eta dx_2.
\end{eqnarray}
By the assumption of $F$ and Proposition 2, we first give the estimate
\begin{eqnarray*}
&& \iiint (2+\xi)^{-\frac{1}{10}}(2+\eta)^{-\frac{1}{10}}(1-H)^2|u_{k\eta}||4F' Q_0(u, u_{\eta}) + 4 F'' u_{\eta}^2|[|u_{k\xi}u_{\eta}| + |u_{k\eta}u_{\xi}| + |u_{kx_2}u_{x_2}|]d\xi d\eta dx_2\\
&&\lesssim \iiint (2+\xi)^{-\frac{11}{10}}(2+\eta)^{-\frac{1}{10}}|u_{k\eta}|| Q_0(u, u_{\eta})|[|u_{k\xi}u_{\eta}| + |u_{k\eta}u_{\xi}| + |u_{kx_2}u_{x_2}|]d\xi d\eta dx_2\\
&& + \iiint (2+\xi)^{-\frac{11}{10}}(2+\eta)^{-\frac{1}{10}}|u_{k\eta}|u_{\eta}^2[|u_{k\xi}u_{\eta}| + |u_{k\eta}u_{\xi}| + |u_{kx_2}u_{x_2}|]d\xi d\eta dx_2\\
&&\lesssim   (1+e_s)^2e_s^{\frac{3}{2}}\iiint (2+\xi)^{-1}(2+\eta)^{-1}|u_{k\eta}||u_{k\xi}| + (2+\xi)^{-\frac{11}{10}}(2+\eta)^{-\frac{1}{10}}u^2_{k\eta} + (2+\xi)^{-1}(2+\eta)^{-\frac{17}{20}}|u_{kx_2}u_{k\eta}|]d\xi d\eta dx_2
\\
&&\lesssim  (1+e_s)^2e_s^{\frac{3}{2}}E_s.
\end{eqnarray*}
Using the similar method, we can also get the estimate of last term. Then, noting (\ref{3.3}), we can get
\begin{eqnarray}\nonumber
&&\iiint (2+\xi)^{-\frac{1}{10}}(2+\eta)^{-\frac{1}{10}}(1-H)u_{k\eta}(\Box u + 4F'^2u_{\eta\eta}) Q_0(u_k,u)d\xi d\eta dx_2\\ \nonumber
&&\leq \iiint (2+\xi)^{-\frac{1}{10}}(2+\eta)^{-\frac{1}{10}}|u_{k\eta}|Q_0(u, Q_0(u, u))[|u_{k\xi}u_{\eta}| + |u_{k\eta}u_{\xi}| + |u_{kx_2}u_{x_2}|]d\xi d\eta dx_2 +  (1+e_s)^2e^{\frac{3}{2}}_s E_s \\ \nonumber
&& \lesssim \iiint (2+\xi)^{-\frac{1}{10}}(2+\eta)^{-\frac{1}{10}}|Q_0(u, Q_0(u, u))|[u^2_{k\xi}|u_{\eta}| + u^2_{k\eta}(|u_{\xi}| + |u_{\eta}| + |u_{x_2}|) + u^2_{kx_2}|u_{x_2}|]d\xi d\eta dx_2\\ \label{3.20}
&&+ (1+e_s)^2e^{\frac{3}{2}}_s E_s.
\end{eqnarray}
Furthermore,
\begin{eqnarray*}
&&Q_0(u, Q_0(u, u))[u^2_{k\xi}|u_{\eta}| + u^2_{k\eta}(|u_{\xi}| + |u_{\eta}| + |u_{x_2}|) + u^2_{kx_2}|u_{x_2}|]\\
&& =[2u_{\xi}Q_{0\eta} + 2u_{\eta}Q_{0\xi} - u_{x_2}Q_{0x_2}][u^2_{k\xi}|u_{\eta}| + u^2_{k\eta}(|u_{\xi}| + |u_{\eta}| + |u_{x_2}|) + u^2_{kx_2}|u_{x_2}|].
\end{eqnarray*}
Here we only estimate the first term and the other terms can be obtained using the similar way. By Proposition 1 and Proposition 2, we have
\begin{eqnarray} \nonumber
&&\iiint (2+\xi)^{-\frac{1}{10}}(2+\eta)^{-\frac{1}{10}}(1-H)^2|u_{\xi}||Q_{0\eta}|[u^2_{k\xi}|u_{\eta}| + u^2_{k\eta}(|u_{\xi}| + |u_{\eta}| + |u_{x_2}|) + u^2_{kx_2}|u_{x_2}|]d\xi d\eta dx_2\\ \nonumber
&&\lesssim\iiint (2+\xi)^{-\frac{1}{10}}(2+\eta)^{-\frac{1}{10}}|u_{\xi}|[|u_{\xi\eta}u_{\eta}|+|u_{\xi}u_{\eta\eta}| + |u_{x_2}u_{x_2\eta}|]|u_{\eta}|u^2_{k\xi} d\xi d\eta dx_2\\  \nonumber
&&+ \iiint (2+\xi)^{-\frac{1}{10}}(2+\eta)^{-\frac{1}{10}}|u_{\xi}|[|u_{\xi\eta}u_{\eta}|+|u_{\xi}u_{\eta\eta}| + |u_{x_2}u_{x_2\eta}|](|u_{\xi}| + |u_{\eta}| + |u_{x_2}|) u^2_{k\eta}d\xi d\eta dx_2\\ \nonumber
&&+ C\iiint (2+\xi)^{-\frac{1}{10}}(2+\eta)^{-\frac{1}{10}}|u_{\xi}|[|u_{\xi\eta}u_{\eta}|+|u_{\xi}u_{\eta\eta}| + |u_{x_2}u_{x_2\eta}|] u^2_{kx_2}|u_{x_2}|d\xi d\eta dx_2\\ \nonumber
&&\lesssim \iiint (2+\xi)^{-\frac{1}{10}}(2+\eta)^{-\frac{1}{10}}[(2+\xi)^{-\frac{1}{4}}(2+\eta)^{-1}\tilde{e}_se_s^{\frac{3}{2}}+ (2+\xi)^{-\frac{1}{2}}(2+\eta)^{-\frac{3}{2}}e_s^2]u^2_{k\xi} d\xi d\eta dx_2\\ \nonumber
&&+ e_s^2\iiint (2+\xi)^{-\frac{1}{10}}(2+\eta)^{-\frac{1}{10}}(2+\xi)^{-1}(2+\eta)^{-1} u^2_{k\eta}d\xi d\eta dx_2\\ \nonumber
&&+ e_s^2\iiint (2+\xi)^{-\frac{1}{10}}(2+\eta)^{-\frac{1}{10}}[(2+\xi)^{-\frac{1}{2}}(2+\eta)^{-\frac{3}{2}}+ (2+\xi)^{-\frac{3}{4}}(2+\eta)^{-\frac{5}{4}}] u^2_{kx_2}d\xi d\eta dx_2\\ \label{3.21}
&& \lesssim (\tilde{e}_se_s^{\frac{3}{2}} +  e^{2}_s) E_s.
\end{eqnarray}
Then we can get
\begin{eqnarray} \label{3.22}
A_2 \lesssim e^{\frac{3}{2}}_s E_s + \tilde{e}_se_s^{\frac{3}{2}}E_s + e_s^{2} E_s + e_s^{\frac{5}{2}} E_s.
\end{eqnarray}
Noting the decay of $F'$ and the above process, we will estimate the term $A_3$
\begin{eqnarray}\nonumber
A_3 &=& \iiint (2+\xi)^{-\frac{11}{10}}(2+\eta)^{-\frac{1}{10}}(1-H)(\Box u + 4F'^2u_{\eta\eta}) u^2_{k\eta} + (2+\xi)^{-\frac{11}{10}}[(2+\eta)^{-\frac{1}{10}}(1-H)]_{\eta}u_{\xi} u^2_{k\eta}\\ \nonumber
&+& [(2+\xi)^{-\frac{1}{10}}(2+\eta)^{-\frac{1}{10}}(1-H)F']_{\xi} u_{\eta}u^2_{k\eta}+ (2+\xi)^{-\frac{21}{10}}(2+\eta)^{-\frac{1}{10}}H_{x_2} u_{x_2}u^2_{k\eta} d\xi d\eta dx_2\\ \nonumber
&\lesssim&  e^{\frac{1}{2}}_s E_s + e_s E_s + e^{2}_s E_s + \iiint (2+\xi)^{-\frac{31}{10}}(2+\eta)^{-\frac{1}{10}}(1-H) |u_{\eta}|u^2_{k\eta}d\xi d\eta dx_2\\ \label{3.23}
&\lesssim&  e^{\frac{1}{2}}_s E_s + e_s E_s  + \tilde{e}_se_s^{\frac{3}{2}}E_s  + e^{2}_s E_s.
\end{eqnarray}
For getting the estimation of the term $A_4$, we will first estimate $J_k$. Denote
\begin{eqnarray*}
 &&J_{k1} = \sum\limits_{k_1+k_2+k_3+k_4 = k, k_3 < k, k_4 < k} \Gamma^{k_1}(1-H) Q_0(\Gamma^{k_2}u, Q_0(\Gamma^{k_3}u, \Gamma^{k_4}u))\\
 && J_{k2} = \sum\limits_{0\leq k_1\leq k} \Gamma^{k_1}(1-H)\Gamma^{k-k_1}[F' Q_0(u, u_{\eta})]\\
  && J_{k3} = \sum\limits_{0\leq k_1\leq k} \Gamma^{k_1}(1-H)\Gamma^{k-k_1}[F''u^2_\eta].
\end{eqnarray*}
For $J_{k1}$, when $k_1 \leq [\frac{1+s}{2}]$, we have
\begin{eqnarray}\nonumber
 &&|J_{k1}| = |\sum\limits_{k_1+k_2+k_3+k_4 = k, k_3 < k, k_4 < k} \Gamma^{k_1}(1-H) Q_0(\Gamma^{k_2}u, Q_0(\Gamma^{k_3}u, \Gamma^{k_4}u))|\\  \label{3.24}
 && \leq (1 + e_s)[|(\Gamma^{k_2}u)_{\xi}Q_{0\eta}(\Gamma^{k_3}u, \Gamma^{k_4}u)| + |(\Gamma^{k_2}u)_{\eta}Q_{0\xi}(\Gamma^{k_3}u, \Gamma^{k_4}u)| + |(\Gamma^{k_2}u)_{x_2}Q_{0x_2}(\Gamma^{k_3}u, \Gamma^{k_4}u)|.
\end{eqnarray}
Without loss of generality, we assume $|k_3|,|k_4| \leq [\frac{s+1}{2}]$ and noting (\ref{2.21}), then
\begin{eqnarray}\nonumber
 &&J_{k1} \lesssim(1 + e_s)|(\Gamma^{k_2}u)_{\xi}||Q_{0\eta}(\Gamma^{k_3}u, \Gamma^{k_4}u)| + |(\Gamma^{k_2}u)_{\eta}||Q_{0\xi}(\Gamma^{k_3}u, \Gamma^{k_4}u)| + |(\Gamma^{k_2}u)_{x_2}||Q_{0x_2}(\Gamma^{k_3}u, \Gamma^{k_4}u)|\\ \label{3.25}
 && \doteq J_{k11} + J_{k12} + J_{k13}.
 \end{eqnarray}
 Noting Lemma 3.2, Lemma 3.3 and Proposition 1,
 \begin{eqnarray}\nonumber
 J_{k11} &=& C(1 + e_s)|(\Gamma^{k_2}u)_{\xi}||Q_{0\eta}(\Gamma^{k_3}u, \Gamma^{k_4}u)| \\ \nonumber
 &\lesssim& (1 + e_s)|(\Gamma^{k_2}u)_{\xi}|(2+\xi)^{-1}[|(\Gamma\Gamma^{k_3}u)_{\eta}||\nabla\Gamma^{k_4}u|
 +(2+\xi)^{\frac{1}{2}}(2+\eta)^{\frac{1}{2}}|(\nabla\Gamma^{k_3}u)_{\eta}|
 |\frac{\Gamma\Gamma^{k_4}u}{\sqrt{(2+\xi)(2+\eta)}}|]\\ \nonumber
 &\lesssim& (1 + e_s)|(\Gamma^{k_2}u)_{\xi}|(2+\xi)^{-1}[|(\Gamma\Gamma^{k_3}u)_{\eta}||\nabla\Gamma^{k_4}u|
 +(2+\xi)^{\frac{1}{2}}(2+\eta)^{-\frac{1}{2}}|(2\nabla\Gamma^{k_3}u_{\eta}+ \Gamma_4\nabla\Gamma^{k_3}u - x_2\nabla\Gamma^{k_3}u_{x_2})||\Gamma\Gamma^{k_4}u_{x_2}|]\\ \nonumber
 &\lesssim& (1 + e_s)|(\Gamma^{k_2}u)_{\xi}|(2+\xi)^{-1}[|(\Gamma\Gamma^{k_3}u)_{\eta}||\nabla\Gamma^{k_4}u|
 +(2+\xi)|( \Gamma\nabla\Gamma^{k_3}u)||\Gamma\Gamma^{k_4}u_{x_2}|]\\ \label{3.26}
 &\lesssim& (2+\xi)^{-\frac{1}{2}}(2+\eta)^{-\frac{1}{2}}(1 + e_s)e_s|(\Gamma^{k_2}u)_{\xi}| \lesssim (2+\xi)^{-\frac{1}{2}}(2+\eta)^{-\frac{1}{2}}(1 + e_s)e_s|u_{k\xi}|.
\end{eqnarray}
Meanwhile,
 \begin{eqnarray}\nonumber
 J_{k12} &=& C(1 + e_s)|(\Gamma^{k_2}u)_{\eta}||Q_{0\xi}(\Gamma^{k_3}u, \Gamma^{k_4}u)| \\ \nonumber
 &\lesssim& (1 + e_s)|(\Gamma^{k_2}u)_{\eta}|(2+\xi)^{-1}[|(\Gamma\Gamma^{k_3}u)_{\xi}||\nabla\Gamma^{k_4}u|
 +(2+\xi)^{\frac{1}{2}}(2+\eta)^{\frac{1}{2}}|(\nabla\Gamma^{k_3}u)_{\xi}|
 |\frac{\Gamma\Gamma^{k_4}u}{\sqrt{(2+\xi)(2+\eta)}}|]\\ \nonumber
 &\lesssim&(1 + e_s)|(\Gamma^{k_2}u)_{\eta}|(2+\xi)^{-1}[(2+\xi)^{-\frac{3}{4}}(2+\eta)^{\frac{1}{4}}e^{\frac{1}{2}}_s(2+\xi)^{-\frac{1}{4}}(2+\eta)^{-\frac{1}{4}}e^{\frac{1}{2}}_s
 +(2+\xi)^{\frac{1}{4}}(2+\eta)^{\frac{1}{4}}e^{\frac{1}{2}}_s|\Gamma\Gamma^{k_4}u_{x_2}|]\\ \nonumber
 &\lesssim& (1 + e_s)|(\Gamma^{k_2}u)_{\xi}|(2+\xi)^{-1}[(2+\xi)^{-1}e_s
 +e_s]\\ \label{3.27}
 &\lesssim& (2+\xi)^{-1}(1 + e_s)e_s|(\Gamma^{k_2}u)_{\eta}| \lesssim (2+\xi)^{-1}(1 + e_s)e_s|u_{k\eta}|
\end{eqnarray}
and
 \begin{eqnarray}\nonumber
 J_{k13} &=& C(1 + e_s)|(\Gamma^{k_2}u)_{x_2}||Q_{0x_2}(\Gamma^{k_3}u, \Gamma^{k_4}u)| \\ \nonumber
 &\lesssim& (1 + e_s)|(\Gamma^{k_2}u)_{x_2}|(2+\xi)^{-1}[|(\Gamma\Gamma^{k_3}u)_{x_2}||\nabla\Gamma^{k_4}u|
 +|(\nabla\Gamma^{k_3}u)_{x_2}|(2+\xi)^{\frac{1}{2}}(2+\eta)^{\frac{1}{2}}|\frac{\Gamma\Gamma^{k_4}u}{\sqrt{(2+\xi)(2+\eta)}}|]\\ \nonumber
 &\lesssim& (1 + e_s)|(\Gamma^{k_2}u)_{x_2}|(2+\xi)^{-1}[(2+\xi)^{-\frac{1}{2}}(2+\eta)^{-\frac{1}{2}}e_s
 +(2+\xi)^{\frac{1}{4}}(2+\eta)^{\frac{1}{4}}e^{\frac{1}{2}}_s|\Gamma\Gamma^{k_4}u_{x_2}|]\\ \nonumber
 &\lesssim& (1 + e_s)|(\Gamma^{k_2}u)_{x_2}|(2+\xi)^{-1}[(2+\xi)^{-\frac{1}{2}}(2+\eta)^{-\frac{1}{2}}e_s
 +e_s]\\ \label{3.28}
 &\lesssim& (2+\xi)^{-1}(1 + e_s)e_s|(\Gamma^{k_2}u)_{x_2}|\lesssim (2+\xi)^{-1}(1 + e_s)e_s |u_{kx_2}|.
\end{eqnarray}

For $J_{k1}$, when $k_1 \geq [\frac{1+s}{2}]$, we have
\begin{eqnarray}\nonumber
 &&|J_{k1}| \lesssim|\sum\limits_{k'_1+k_2+k_3+k_4 = k, k'_1 < k_1, k_3 < k, k_4 < k} \Gamma^{k'_1}(Q_0 + F'u_{\eta}) Q_0(\Gamma^{k_2}u, Q_0(\Gamma^{k_3}u, \Gamma^{k_4}u))|\\ \nonumber
 &&\leq \sum\limits_{k'_1+k_2+k_3+k_4 = k, k'_1 < k_1, k_3 < k, k_4 < k} [|\Gamma^{k'_1}Q_0(u,u)| + |\Gamma^{k'_1}(F'u_{\eta})|]| Q_0(\Gamma^{k_2}u, Q_0(\Gamma^{k_3}u, \Gamma^{k_4}u))|.
\end{eqnarray}
In the following we will estimate the above two parts separately.
\begin{eqnarray}\nonumber
&&\sum\limits_{k'_1+k_2+k_3+k_4 = k, k'_1 < k_1, k_3 < k, k_4 < k} |\Gamma^{k'_1}Q_0(u,u)|
| Q_0(\Gamma^{k_2}u, Q_0(\Gamma^{k_3}u, \Gamma^{k_4}u))|\\ \nonumber
&&= \sum\limits_{k_2+k_3+k_4 +k_5+k_6 \leq k} |(\Gamma^{k_5}u_{\xi}\Gamma^{k_6}u_{\eta} + \Gamma^{k_5}u_{x_2}\Gamma^{k_6}u_{x_2})|
| Q_0(\Gamma^{k_2}u, Q_0(\Gamma^{k_3}u, \Gamma^{k_4}u))|\\ \nonumber
&& \leq \sum\limits_{k_2+k_3+k_4 +k_5+k_6 \leq k} |(\Gamma^{k_5}u_{\xi}\Gamma^{k_6}u_{\eta} + \Gamma^{k_5}u_{x_2}\Gamma^{k_6}u_{x_2})|
(2+\xi)^{-1}| \Gamma\Gamma^{k_2}u|| Q_0(\nabla\Gamma^{k_3}u, \Gamma^{k_4}u)| \\ \nonumber
&&+ \sum\limits_{k_2+k_3+k_4 +k_5+k_6 \leq k} |(\Gamma^{k_5}u_{\xi}\Gamma^{k_6}u_{\eta} + \Gamma^{k_5}u_{x_2}\Gamma^{k_6}u_{x_2})|
(2+\xi)^{-1}| \nabla\Gamma^{k_2}u || Q_0(\Gamma\Gamma^{k_3}u, \Gamma^{k_4}u)|\\ \label{3.29}
 &&\doteq B_1 + B_2
\end{eqnarray}
where
\begin{eqnarray}\nonumber
 B_1 &\leq&  (2+\xi)^{-1}|(\Gamma^{k_5}u_{\xi}\Gamma^{k_6}u_{\eta} + \Gamma^{k_5}u_{x_2}\Gamma^{k_6}u_{x_2})|
|\Gamma\Gamma^{k_2}u|[|(\nabla\Gamma^{k_3}u)_{\xi}(\Gamma^{k_4}u)_{\eta}| + |(\Gamma^{k_3}u)_{\eta}(\nabla\Gamma^{k_4}u)_{\xi}| + |(\nabla\Gamma^{k_3}u)_{x_2}(\Gamma^{k_4}u)_{x_2}|] \\ \label{3.30}
 &\doteq& B_{11} + B_{12}+ B_{13}.
\end{eqnarray}
For the case $|k_5|\geq |k_6|$, by Corollary 1 and Lemma 3, we have
\begin{eqnarray}\nonumber
B_{11} &=&  (2+\xi)^{-1}|(\Gamma^{k_5}u_{\xi}\Gamma^{k_6}u_{\eta} + \Gamma^{k_5}u_{x_2}\Gamma^{k_6}u_{x_2})|
|\Gamma\Gamma^{k_2}u||(\nabla\Gamma^{k_3}u)_{\xi}||(\Gamma^{k_4}u)_{\eta}|\\ \nonumber
&\lesssim& (2+\xi)^{-1}(|\Gamma^{k_5}u_{\xi}||\Gamma^{k_6}u_{\eta}| + |\Gamma^{k_5}u_{x_2}||\Gamma^{k_6}u_{x_2}|)
|\Gamma\Gamma^{k_2}u||(\nabla\Gamma^{k_3}u)_{\xi}||(\Gamma^{k_4}u)_{\eta}|\\ \nonumber
&\lesssim& [|\Gamma^{k_5}u_{\xi}|(2+\eta)^{-\frac{1}{2}} + |\Gamma^{k_5}u_{x_2}|(2+\xi)^{-\frac{1}{4}}(2+\eta)^{-\frac{1}{4}}]
|\frac{\Gamma\Gamma^{k_2}u}{\sqrt{(2+\eta)(2+\xi)}}|(2+\xi)^{-\frac{1}{2}}
e^{\frac{3}{2}}_s\\ \nonumber
&\lesssim& [|\Gamma^{k_5}u_{\xi}|(2+\eta)^{-\frac{1}{2}} + |\Gamma^{k_5}u_{x_2}|(2+\xi)^{-\frac{1}{4}}(2+\eta)^{-\frac{1}{4}}]
|(\Gamma\Gamma^{k_2}u)_{x_2}|(2+\xi)^{-\frac{1}{2}}
e^{\frac{3}{2}}_s\\ \nonumber
&\lesssim& [|\Gamma^{k_5}u_{\xi}|(2+\eta)^{-\frac{3}{4}}(2+\xi)^{-\frac{3}{4}} + |\Gamma^{k_5}u_{x_2}|(2+\xi)^{-1}(2+\eta)^{-\frac{1}{2}}]e^{2}_s\\ \nonumber
&\lesssim& e^{2}_s[(2+\eta)^{-\frac{3}{4}}(2+\xi)^{-\frac{3}{4}}|u_{k\xi}| + (2+\xi)^{-1}(2+\eta)^{-\frac{1}{2}}|u_{kx_2}|].
\end{eqnarray}

For the case $|k_5|\leq |k_6|$, by Proposition 2 and Lemma 3, we have
\begin{eqnarray}\nonumber
B_{11} &=&  (2+\xi)^{-1}|(\Gamma^{k_5}u_{\xi}\Gamma^{k_6}u_{\eta} + \Gamma^{k_5}u_{x_2}\Gamma^{k_6}u_{x_2})|
|\Gamma\Gamma^{k_2}u||(\nabla\Gamma^{k_3}u)_{\xi}||(\Gamma^{k_4}u)_{\eta}|\\ \nonumber
&\lesssim& (2+\xi)^{-1}(|\Gamma^{k_5}u_{\xi}||\Gamma^{k_6}u_{\eta}| + |\Gamma^{k_5}u_{x_2}||\Gamma^{k_6}u_{x_2}|)
|\Gamma\Gamma^{k_2}u||(\nabla\Gamma^{k_3}u)_{\xi}||(\Gamma^{k_4}u)_{\eta}|\\ \nonumber
&\lesssim& (2+\xi)^{-1}[(2+\xi)^{-\frac{3}{4}}(2+\eta)^{-\frac{1}{4}}e_s^{\frac{1}{2}}|\Gamma^{k_6}u_{\eta}| + (2+\xi)^{-\frac{1}{4}}(2+\eta)^{-\frac{1}{4}}e_s^{\frac{1}{2}}|\Gamma^{k_6}u_{x_2}|]
|\frac{\Gamma\Gamma^{k_2}u}{\sqrt{(2+\eta)(2+\xi)}}|
e_s\\ \nonumber
&\lesssim& (2+\xi)^{-1}[(2+\xi)^{-\frac{3}{4}}(2+\eta)^{\frac{1}{4}}e_s^{\frac{1}{2}}|\Gamma^{k_6}u_{\eta}| + (2+\xi)^{-\frac{1}{4}}(2+\eta)^{-\frac{1}{4}}e^{\frac{1}{2}}|\Gamma^{k_6}u_{x_2}|]
|(\Gamma\Gamma^{k_2}u)_{x_2}|e_s\\ \nonumber
&\lesssim& (2+\xi)^{-1}[(2+\xi)^{-\frac{3}{4}}(2+\eta)^{\frac{1}{4}}e_s^{\frac{1}{2}}|\Gamma^{k_6}u_{\eta}| + (2+\xi)^{-\frac{1}{4}}(2+\eta)^{-\frac{1}{4}}e^{\frac{1}{2}}|\Gamma^{k_6}u_{x_2}|]
(2+\xi)^{-\frac{1}{4}}(2+\eta)^{-\frac{1}{4}}e^{\frac{3}{2}}_s\\ \nonumber
&\lesssim& (2+\xi)^{-1}[(2+\xi)^{-1}|\Gamma^{k_6}u_{\eta}| + (2+\xi)^{-\frac{1}{2}}(2+\eta)^{-\frac{1}{2}}|\Gamma^{k_6}u_{x_2}|]
e^{2}_s\\ \nonumber
&\lesssim& e^{2}_s[(2+\xi)^{-2}|u_{k\eta}| + (2+\xi)^{-\frac{3}{2}}(2+\eta)^{-\frac{1}{2}}|u_{kx_2}|].
\end{eqnarray}
The estimation of $B_{12}$ can be get using the similar way to $B_{11}$. Next we will give the estimation of $B_{13}$.
For the case $|k_5|\geq |k_6|$, by Corollary 1 and Lemma 3, we have
\begin{eqnarray}\nonumber
B_{13} &=&  (2+\xi)^{-1}|(\Gamma^{k_5}u_{\xi}\Gamma^{k_6}u_{\eta} + \Gamma^{k_5}u_{x_2}\Gamma^{k_6}u_{x_2})|
|\Gamma\Gamma^{k_2}u||(\nabla\Gamma^{k_3}u)_{x_2}||(\Gamma^{k_4}u)_{x_2}|\\ \nonumber
&\lesssim&(2+\xi)^{-1}(|\Gamma^{k_5}u_{\xi}||\Gamma^{k_6}u_{\eta}|+ |\Gamma^{k_5}u_{x_2}||\Gamma^{k_6}u_{x_2}|)
|\Gamma\Gamma^{k_2}u||(\nabla\Gamma^{k_3}u)_{x_2}||(\Gamma^{k_4}u)_{x_2}|\\ \nonumber
&\lesssim&(2+\xi)^{-1}[|\Gamma^{k_5}u_{\xi}|(2+\eta)^{-\frac{1}{2}} + |\Gamma^{k_5}u_{x_2}|(2+\xi)^{-\frac{1}{4}}(2+\eta)^{-\frac{1}{4}}]
|\frac{\Gamma\Gamma^{k_2}u}{\sqrt{(2+\eta)(2+\xi)}}|
e^{\frac{3}{2}}_s\\ \nonumber
&\lesssim&(2+\xi)^{-1}[|\Gamma^{k_5}u_{\xi}|(2+\eta)^{-\frac{1}{2}} + |\Gamma^{k_5}u_{x_2}|(2+\xi)^{-\frac{1}{4}}(2+\eta)^{-\frac{1}{4}}]
|(\Gamma\Gamma^{k_2}u)_{x_2}|e^{\frac{3}{2}}_s\\ \nonumber
&\lesssim&(2+\xi)^{-1}[(2+\eta)^{-\frac{3}{4}}(2+\xi)^{-\frac{1}{4}}|\Gamma^{k_5}u_{\xi}| + |\Gamma^{k_5}u_{x_2}|(2+\xi)^{-\frac{1}{2}}(2+\eta)^{-\frac{1}{2}}]e^{2}_s\\ \nonumber
&\lesssim& e^{2}_s [(2+\eta)^{-\frac{3}{4}}(2+\xi)^{-\frac{5}{4}}|u_{k\xi}| + (2+\xi)^{-\frac{3}{2}}(2+\eta)^{-\frac{1}{2}}|u_{kx_2}|].
\end{eqnarray}

For the case $|k_5|\leq |k_6|$, by Proposition 2 and Lemma 3, we have
\begin{eqnarray}\nonumber
B_{13} &=&  (2+\xi)^{-1}|(\Gamma^{k_5}u_{\xi}\Gamma^{k_6}u_{\eta} + \Gamma^{k_5}u_{x_2}\Gamma^{k_6}u_{x_2})|
|\Gamma\Gamma^{k_2}u||(\nabla\Gamma^{k_3}u)_{x_2}||(\Gamma^{k_4}u)_{x_2}|\\ \nonumber
&\lesssim& (2+\xi)^{-1}[|\Gamma^{k_5}u_{\xi}||\Gamma^{k_6}u_{\eta}| + |\Gamma^{k_5}u_{x_2}||\Gamma^{k_6}u_{x_2}|]
|\Gamma\Gamma^{k_2}u||(\nabla\Gamma^{k_3}u)_{x_2}||(\Gamma^{k_4}u)_{x_2}|\\ \nonumber
&\lesssim& (2+\xi)^{-1}[(2+\xi)^{\frac{1}{2}}(2+\eta)^{\frac{1}{2}}|\Gamma^{k_5}u_{\xi x_2}||\Gamma^{k_6}u_{\eta}| + |\Gamma^{k_5}u_{x_2}||\Gamma^{k_6}u_{x_2}|]
|\Gamma\Gamma^{k_2}u||(\nabla\Gamma^{k_3}u)_{x_2}||(\Gamma^{k_4}u)_{x_2}|\\ \nonumber
&\lesssim& (2+\xi)^{-1}[(2+\xi)^{\frac{1}{4}}(2+\eta)^{\frac{1}{4}}e_s^{\frac{1}{2}}|\Gamma^{k_6}u_{\eta}|  + (2+\xi)^{-\frac{1}{4}}(2+\eta)^{-\frac{1}{4}}e_s^{\frac{1}{2}}|\Gamma^{k_6}u_{x_2}|]
|\frac{\Gamma\Gamma^{k_2}u}{\sqrt{(2+\eta)(2+\xi)}}|
e_s\\ \nonumber
&\lesssim& (2+\xi)^{-1}[(2+\xi)^{\frac{1}{4}}(2+\eta)^{\frac{1}{4}}
e_s^{\frac{1}{2}}|\Gamma^{k_6}u_{\eta}| + (2+\xi)^{-\frac{1}{4}}(2+\eta)^{-\frac{1}{4}}e_s^{\frac{1}{2}}|\Gamma^{k_6}u_{x_2}|]
|(\Gamma\Gamma^{k_2}u)_{x_2}|
e_s\\ \nonumber
&\lesssim& e^{2}_s[(2+\xi)^{-1}|u_{k\eta}| + (2+\xi)^{-\frac{3}{2}}(2+\eta)^{-\frac{1}{2}}|u_{kx_2}|].
\end{eqnarray}
Using the similar procedures, we can get the estimate of $B_2$. Then, we can get the estimate of $J_{k1}$.

In the following, we will give the estimation of $J_{k2}$. When $k_1 \leq [\frac{k}{2}]$, the other case can be get easily.
\begin{eqnarray*}
 J_{k2} &\leq& \sum\limits_{0\leq k_1 + k_2 = k} \Gamma^{k_1}(1-H)\Gamma^{k_2}[F' Q_0(u, u_{\eta})]\\ \nonumber
&\lesssim& (1+e_s) \sum\limits_{0\leq k_3 + k_4 = k_2} \Gamma^{k_3}F' \Gamma^{k_4}Q_0(u, u_{\eta})\\ \nonumber
&\lesssim& (1+e_s) (1+\xi)^{-2}\sum\limits_{k_5 + k_6 = k_4}Q_0(\Gamma^{k_5}u, \Gamma^{k_6}u_{\eta})\\
&\lesssim&(1+e_s) (1+\xi)^{-2}\sum\limits_{k_5 + k_6 = k_4}[|\Gamma^{k_5}u_{\xi}|| \Gamma^{k_6}u_{\eta\eta}| + |\Gamma^{k_5}u_{\eta}|| \Gamma^{k_6}u_{\eta\xi}| + |\Gamma^{k_5}u_{x_2}|| \Gamma^{k_6}u_{\eta x_2}|).
 \end{eqnarray*}
For the case of $|k_5| \geq |k_6| $, noting Corollary 1, we can get
\begin{eqnarray} \label{3.31}
 J_{k2} \lesssim(1+e_s) (2+\xi)^{-1}(2+\eta)^{-\frac{1}{2}}e_s^{\frac{1}{2}}[|\Gamma^{k_5}u_{\xi}| + |\Gamma^{k_5}u_{\eta}| + |\Gamma^{k_5}u_{x_2}|]
\end{eqnarray}
For the case of $|k_5| \leq |k_6| $,  we can get
\begin{eqnarray}\label{3.32}
 J_{k2} \lesssim(1+e_s) (2+\xi)^{-1}e_s^{\frac{1}{2}}[(2+\xi)^{-\frac{1}{4}}(2+\eta)^{-\frac{1}{4}}|\Gamma^{k_6}u_{\eta \eta}| + (2+\eta)^{-\frac{1}{2}}|\Gamma^{k_6}u_{\xi \eta}| + (2+\xi)^{-\frac{1}{4}}(2+\eta)^{-\frac{1}{4}}|\Gamma^{k_6}u_{\eta x_2}|].
\end{eqnarray}
Furthermore, we will estimate the last term $J_{k3}$. When $k_1 < k_2$, it is easily to get
\begin{eqnarray}\nonumber
J_{k3} &=& \sum\limits_{0\leq k_1 + k_2 = k} \Gamma^{k_1}(1-H)\Gamma^{k_2}[F''u^2_\eta]\\  \label{3.33}
&\lesssim &(1+e_s)e_s^{\frac{1}{2}} (2+\xi)^{-\frac{21}{10}} (2+\eta)^{-\frac{1}{2}}\sum\limits_{0\leq  k_2 \leq k}|\Gamma^{k_2}u_{\eta}|\lesssim (1+e_s)e^{\frac{1}{2}}_s (2+\xi)^{-\frac{21}{10}} (2+\eta)^{-\frac{1}{2}} |u_{k\eta}|.
\end{eqnarray}
When $k_1 > k_2$, we have
\begin{eqnarray} \nonumber
J_{k3} &\lesssim& \sum\limits_{0\leq k_1 + k_2 = k} |\Gamma^{k_1}(Q_0 + 4 F'u_\eta)||\Gamma^{k_2}[F''u^2_\eta]|\\ \nonumber
&\lesssim & (2+\xi)^{-2} (2+\eta)^{-1}e_s\sum\limits_{0\leq  k_1 \leq k}[|\Gamma^{k_1}(Q_0(u,u)| + | \Gamma^{k_1} u_\eta|]\\ \nonumber
&\lesssim& (2+\xi)^{-2} (2+\eta)^{-1}e_s[((2+\xi)^{\frac{1}{2}}(2+\eta)^{\frac{1}{2}}|\frac{u_{\xi}}{\sqrt{(2+\xi)(2+\eta)}}|+1)|u_{k\eta}| + | u_\eta||u_{k\xi}| + |u_{x_2}||u_{kx_2}|]\\ \nonumber
&\lesssim& (2+\xi)^{-2} (2+\eta)^{-1}e_s[((2+\xi)^{\frac{1}{2}}(2+\eta)^{\frac{1}{2}}|u_{\xi x_2}|+1)|u_{k\eta}| + | u_\eta||u_{k\xi}| + |u_{x_2}||u_{kx_2}|]\\ \nonumber
&\lesssim& (2+\xi)^{-2} (2+\eta)^{-1}e_s[((2+\xi)^{\frac{1}{4}}(2+\eta)^{\frac{1}{4}}e^{\frac{1}{2}}_s+1)|u_{k\eta}| + | u_\eta||u_{k\xi}| + |u_{x_2}||u_{kx_2}|]\\ \label{3.34}
&\lesssim& (2+\xi)^{-\frac{7}{4}} (2+\eta)^{-\frac{3}{4}}e_s(e^{\frac{1}{2}}_s+1)|u_{k\eta}| + (2+\xi)^{-2} (2+\eta)^{-\frac{3}{2}}e^{\frac{3}{2}}|u_{k\xi}| + (2+\xi)^{-\frac{9}{4}} (2+\eta)^{-\frac{5}{4}}e^{\frac{3}{2}}|u_{kx_2}|.
\end{eqnarray}
Then, the estimation of $A_4$ can be got
\begin{eqnarray}\nonumber
A_4 &=& \iiint (2+\xi)^{-\frac{1}{10}}(2+\eta)^{-\frac{1}{10}} u_{k\eta} (1-H)J_k d\xi d\eta dx_2 \\ \label{3.35}
&\lesssim& e_s^{\frac{1}{2}} (1+ e_s^{\frac{1}{2}} + e_s + e_s^{\frac{3}{2}}+ e_s^{2} + e_s^{\frac{5}{2}})E_s.
\end{eqnarray}
There is only last term $A_5$ to be estimated.
\begin{eqnarray}\nonumber
A_5 &=&  - 4\sum\limits_{k_1+k_2=k}\iiint (2+\xi)^{-\frac{1}{10}}(2+\eta)^{-\frac{1}{10}} u_{k\eta}\Gamma^{k_1}(F'^2u_{\eta\eta})(\Gamma^{k_2}H)d\xi d\eta dx_2\\ \nonumber
&\lesssim& \iiint (2+\xi)^{-\frac{21}{10}}(2+\eta)^{-\frac{1}{10}}[u_{k\eta\eta} H u_{k\eta} + u_{\eta\eta}\Gamma^k(Q_0 + 4F''u_\eta^2)u_{k\eta}]d\xi d\eta d x_2 \\ \nonumber
&\lesssim& \iiint (2+\xi)^{-\frac{21}{10}}(2+\eta)^{-\frac{1}{10}}[H \frac{d}{d\eta}u^2_{k\eta} + |u_{\eta\eta}|[(|Q_0(u,u_k)| + |u_\eta||u_{k\eta}|]|u_{k\eta}|]d\xi d\eta d x_2 \\ \nonumber
&\lesssim&  \iiint (2+\xi)^{-\frac{21}{10}}(2+\eta)^{-\frac{1}{10}}|H_{\eta}| u^2_{k\eta}  + |u_{\eta\eta}|[(|Q_0(u,u_k)| + |u_\eta||u_{k\eta}|]|u_{k\eta}|d\xi d\eta d x_2 \\ \nonumber
&+& \iiint (2+\xi)^{-\frac{21}{10}}(2+\eta)^{-\frac{11}{10}}|H| u^2_{k\eta}d\xi d\eta d x_2 \\ \nonumber
&\lesssim&  \iiint (2+\xi)^{-\frac{21}{10}}(2+\eta)^{-\frac{1}{10}}|Q_0(u,u_{\eta}) + F'u_{\eta\eta}| u^2_{k\eta}  + |u_{\eta\eta}||u_{\eta}u_{k\xi} + u_{\xi}u_{k\eta} +u_{x_2}u_{kx_2}||u_{k\eta}|d\xi d\eta d x_2 + e_sE_s + e_s^{\frac{1}{2}}E_s \\ \nonumber
&\lesssim&  \iiint (2+\xi)^{-\frac{21}{10}}(2+\eta)^{-\frac{1}{10}}|u_{\eta}u_{\eta\xi} + u_{\xi}u_{\eta\eta} +u_{x_2}u_{\eta x_2}| u^2_{k\eta} d\xi d\eta d x_2 +  \tilde{e}_se_s^{\frac{1}{2}}E_s + e_sE_s + e_s^{\frac{1}{2}}E_s \\ \nonumber
&\lesssim&  \tilde{e}_se_s^{\frac{1}{2}}E_s + e_sE_s + e_s^{\frac{1}{2}}E_s.
\end{eqnarray}
Then, we get the all estimates of $(\ref{3.9})$.

Multiplying $(2+\xi)^{-\frac{1}{10}}(2+\eta)^{-\frac{1}{10}} u_{k\xi}e^{B(\xi)}$ to the system (\ref{3.4}) and integrating it about $\xi$, $x_2$ and $\eta$, we can get the left hand side parts of system
\begin{eqnarray} \nonumber
&& 4\iiint (2+\xi)^{-\frac{1}{10}}(2+\eta)^{-\frac{1}{10}}\partial_{\xi\eta}u_k u_{k\xi}d\xi d\eta dx_2  - \iiint (2+\xi)^{-\frac{1}{10}}(2+\eta)^{-\frac{1}{10}}\partial_{x_2x_2}u_k u_{k\xi}d\eta dx_2\\ \nonumber
&& + 4\iiint (2+\xi)^{-\frac{1}{10}}(2+\eta)^{-\frac{1}{10}}\Gamma^k{(F'^2 u_{\eta\eta})} u_{k\xi}d\xi d\eta dx_2\\ \nonumber
&& = 2 \iiint (2+\xi)^{-\frac{1}{10}}(2+\eta)^{-\frac{1}{10}} \frac{d}{d\eta}u^2_{k\xi} d \xi d\eta dx_2 + \iiint (2+\xi)^{-\frac{1}{10}}(2+\eta)^{-\frac{1}{10}}[\frac{d}{dx_2}(u_{k\xi}u_{kx_2})- \frac{1}{2}\frac{d}{d\xi}u^2_{kx_2}]d\xi d\eta dx_2 \\ \nonumber
&& = -\frac{1}{2} \iiint \frac{d}{d\xi} [(2+\xi)^{-\frac{1}{10}}(2+\eta)^{-\frac{1}{10}}u^2_{kx_2}] d\xi d\eta dx_2  + \frac{1}{20} \iiint  [(2+\xi)^{-\frac{11}{10}}(2+\eta)^{-\frac{1}{10}}u^2_{kx_2}] d\xi d\eta dx_2 \\ \nonumber
&& +  \frac{1}{5} \iiint (2+\xi)^{-\frac{1}{10}}(2+\eta)^{-\frac{11}{10}}u^2_{k\xi}d\xi d\eta dx_2  + \iiint 2  \frac{d}{d\eta }[(2+\xi)^{-\frac{1}{10}}(2+\eta)^{-\frac{1}{10}} u^2_{k\xi}]d\xi d\eta dx_2\\ \label{3.36}
&& + \iiint \frac{d}{dx_2}[(2+\xi)^{-\frac{1}{10}}(2+\eta)^{-\frac{1}{10}}u_{k\xi}u_{kx_2}] d\xi d\eta dx_2.
\end{eqnarray}
The right hand side parts can be obtained as follows
\begin{eqnarray*}
&&\iiint (2+\xi)^{-\frac{1}{10}}(2+\eta)^{-\frac{1}{10}}u_{k\xi} (1-H)[Q_0(u, 2Q_0(u, u_k))+ 4F' Q_0(u, u_{k\eta}) + J_k + \Gamma^k(4F'^2 u_{\eta\eta} H)] d\xi d\eta dx_2\\
&&\doteq II_1 + II_2 + II_3 + II_4.
\end{eqnarray*}
Then, we can get
\begin{eqnarray*}
\uppercase\expandafter{\romannumeral2}_1 &=& \iiint (2+\xi)^{-\frac{1}{10}}(2+\eta)^{-\frac{1}{10}}u_{k\xi}(1-H)[4u_{\xi}Q_{0\eta}(u, u_k) + 4u_{\eta}Q_{0\xi}(u, u_k) - 2u_{x_2}Q_{0x_2}(u, u_k) d\eta dx_2\\
&=&\iiint 4\{\frac{d}{d\eta}[(2+\xi)^{-\frac{1}{10}}(2+\eta)^{-\frac{1}{10}}(1-H)u_{\xi}u_{k\xi} Q_0 ] - (2+\xi)^{-\frac{1}{10}}(2+\eta)^{-\frac{1}{10}}(1-H)u_{\xi\eta}u_{k\xi} Q_0\\ &-& [(2+\xi)^{-\frac{1}{10}}(2+\eta)^{-\frac{1}{10}}(1-H)u_{k\xi}]_{\eta} u_{\xi} Q_0\}
+ 4\{\frac{d}{d\xi}[(2+\xi)^{-\frac{1}{10}}(2+\eta)^{-\frac{1}{10}}(1-H)u_{\eta}u_{k\xi} Q_0]  \\
&-& (2+\xi)^{-\frac{1}{10}}(2+\eta)^{-\frac{1}{10}}(1-H)u_{\xi\eta}u_{k\xi} Q_0 - [(2+\xi)^{-\frac{1}{10}}(2+\eta)^{-\frac{1}{10}}(1-H)u_{k\xi}]_{\xi}u_{\eta} Q_0\} \\
&-& 2 \{\frac{d}{d x_2}[(2+\xi)^{-\frac{1}{10}}(2+\eta)^{-\frac{1}{10}}(1-H)u_{x_2}u_{k\xi} Q_0]  - (2+\xi)^{-\frac{1}{10}}(2+\eta)^{-\frac{1}{10}}(1-H)u_{x_2x_2}u_{k\xi} Q_0 \\
&-& [(2+\xi)^{-\frac{1}{10}}(2+\eta)^{-\frac{1}{10}}(1-H)u_{k\xi}]_{x_2}u_{x_2} Q_0\} d\xi d\eta dx_2\\
&=&  \iiint 4 \frac{d}{d\eta}[(2+\xi)^{-\frac{1}{10}}(2+\eta)^{-\frac{1}{10}}(1-H)u_{\xi}u_{k\xi} Q_0 ] + 4 \frac{d}{d\xi}[(2+\xi)^{-\frac{1}{10}}(2+\eta)^{-\frac{1}{10}}(1-H)u_{\eta}u_{k\xi} Q_0] \\
&-& 2 \frac{d}{d x_2}[(2+\xi)^{-\frac{1}{10}}(2+\eta)^{-\frac{1}{10}}(1-H)u_{x_2}u_{k\xi} Q_0]d\xi d\eta dx_2\\
&-& 2 \iiint Q_0((2+\xi)^{-\frac{1}{10}}(2+\eta)^{-\frac{1}{10}}(1-H)u_{k\xi}, u)Q_0(u_k,u)d\xi d\eta dx_2 \\
&-&  2\iiint (2+\xi)^{-\frac{1}{10}}(2+\eta)^{-\frac{1}{10}}(1-H)u_{k\xi}(\Box u + 4F'^2u_{\eta\eta}) Q_0(u_k,u)d\xi d\eta dx_2
\end{eqnarray*}
and
\begin{eqnarray*}
\uppercase\expandafter{\romannumeral2}_2 &=& 4\iiint  (2+\xi)^{-\frac{1}{10}}(2+\eta)^{-\frac{1}{10}}(1-H)F' u_{k\xi}[2u_{\xi}u_{k\eta\eta} + 2u_{\eta}u_{k\eta\xi} - u_{x_2}u_{k\eta x_2}] d\xi d\eta dx_2\\ \nonumber
&=& 8\iiint (2+\xi)^{-\frac{1}{10}}(2+\eta)^{-\frac{1}{10}}(1-H)F' u_{\xi} [\frac{d}{d \eta} (u_{k\xi}u_{k\eta})
- \frac{1}{2}\frac{d}{d \xi}u^2_{k\eta}] d\xi d\eta dx_2 \\
&+& 4 \iiint (2+\xi)^{-\frac{1}{10}}(2+\eta)^{-\frac{1}{10}}(1-H)F'u_{\eta} \frac{d}{d \eta} u^2_{k\xi}d\xi d\eta dx_2 \\ \nonumber
&-& 4\iiint \frac{d}{d\eta} [(2+\xi)^{-\frac{1}{10}}(2+\eta)^{-\frac{1}{10}} (1-H) F' u_{x_2}u_{k\xi}u_{kx_2} ] d\xi d\eta dx_2\\
&+& 4\iiint [(2+\xi)^{-\frac{1}{10}}(2+\eta)^{-\frac{1}{10}} (1-H) F' u_{x_2}u_{k\xi}]_{\eta} u_{kx_2} d\xi d\eta dx_2\\
&=&\iiint 8\frac{d}{d \eta} [(2+\xi)^{-\frac{1}{10}}(2+\eta)^{-\frac{1}{10}}(1-H)F' u_{\xi} u_{k\xi}u_{k\eta}]
- 4\frac{d}{d \xi}[(2+\xi)^{-\frac{1}{10}}(2+\eta)^{-\frac{1}{10}}(1-H)F' u_{\xi} u^2_{k\eta}] d\xi d\eta dx_2 \\
&-&8\iiint [(2+\xi)^{-\frac{1}{10}}(2+\eta)^{-\frac{1}{10}}(1-H)F' u_{\xi}]_{\eta} u_{k\xi}u_{k\eta}d\xi d\eta dx_2 +  \iiint 4[(2+\xi)^{-\frac{1}{10}}(2+\eta)^{-\frac{1}{10}}(1-H)F' u_{\xi}]_{\xi} u^2_{k\eta} d\xi d\eta dx_2 \\ \nonumber
&+& 4 \iiint \frac{d}{d \eta}[(2+\xi)^{-\frac{1}{10}}(2+\eta)^{-\frac{1}{10}}(1-H)F' u_{\eta} u^2_{k\xi}] d\xi d\eta dx_2 - 4 \iiint [(2+\xi)^{-\frac{1}{10}}(2+\eta)^{-\frac{1}{10}}(1-H)u_{\eta}]_{\eta} u^2_{k\xi}d\xi d\eta dx_2  \\ \nonumber
&-& 4\iiint \frac{d}{d\eta} [(2+\xi)^{-\frac{1}{10}}(2+\eta)^{-\frac{1}{10}} (1-H) F' u_{x_2}u_{k\xi}u_{kx_2} ] d\xi d\eta dx_2\\
& +& 4\iiint  [(2+\xi)^{-\frac{1}{10}}(2+\eta)^{-\frac{1}{10}} (1-H) F' u_{x_2}]_{\eta} u_{k\xi} u_{kx_2} d\xi d\eta dx_2 \\
&+& 4\iiint [(2+\xi)^{-\frac{1}{10}}(2+\eta)^{-\frac{1}{10}} (1-H) F' u_{x_2}] u_{k\xi\eta} u_{kx_2} d\xi d\eta dx_2.
\end{eqnarray*}

Noting (\ref{2.21}),
$$4u_{k\xi\eta} = \Box u_k + u_{kx_2 x_2} + 4\Gamma^k(F'^2u_{\eta\eta}).$$
Then, the last term of the above equation can be rewritten as
\begin{eqnarray*}
 &&4\iiint [(2+\xi)^{-\frac{1}{10}}(2+\eta)^{-\frac{1}{10}} (1-H) F' u_{x_2}] u_{k\xi\eta} u_{kx_2} d\xi d\eta dx_2 \\
 && =  \iiint [(2+\xi)^{-\frac{1}{10}}(2+\eta)^{-\frac{1}{10}} (1-H) F' u_{x_2}] \Box u_{k} u_{kx_2} d\xi d\eta dx_2 + \iiint [(2+\xi)^{-\frac{1}{10}}(2+\eta)^{-\frac{1}{10}} (1-H) F' u_{x_2}] u_{kx_2x_2} u_{kx_2}d\xi d\eta\\
 &&+ 4\iiint [(2+\xi)^{-\frac{1}{10}}(2+\eta)^{-\frac{1}{10}} (1-H) F' u_{x_2}] \Gamma^k(F'^2u_{\eta\eta}) u_{kx_2}d\xi d\eta dx_2\\
 && =  \iiint [(2+\xi)^{-\frac{1}{10}}(2+\eta)^{-\frac{1}{10}} (1-H) F' u_{x_2}] [\Box u_{k} + 4\Gamma^k(F'^2u_{\eta\eta})] u_{kx_2}d\xi d\eta dx_2\\
 && + \frac{1}{2}\iiint (2+\xi)^{-\frac{1}{10}}(2+\eta)^{-\frac{1}{10}}\{ \frac{d}{dx_2}[(1-H) F' u_{x_2} u^2_{kx_2}] - [(1-H) F' u_{x_2}]_{x_2} u^2_{kx_2} \}d\xi d\eta dx_2.
\end{eqnarray*}

Therefore,
\begin{eqnarray}\nonumber
&&  \frac{1}{20} \iiint (2+\xi)^{-\frac{11}{10}}(2+\eta)^{-\frac{1}{10}}u^2_{kx_2} d\xi d\eta dx_2 + \frac{1}{5} \iiint (2+\xi)^{-\frac{1}{10}}(2+\eta)^{-\frac{11}{10}}u^2_{k\xi} d\xi d\eta dx_2 \\ \nonumber
&=&  \frac{1}{2} \iiint \frac{d}{d\xi} [(2+\xi)^{-\frac{1}{10}}(2+\eta)^{-\frac{1}{10}}u^2_{kx_2}] d\xi d\eta dx_2 - 2  \frac{d}{d\eta }[(2+\xi)^{-\frac{1}{10}}(2+\eta)^{-\frac{1}{10}} u^2_{k\xi}]d\xi d\eta dx_2\\ \nonumber
&-&  \iiint \frac{d}{dx_2}[(2+\xi)^{-\frac{1}{10}}(2+\eta)^{-\frac{1}{10}}u_{k\xi}u_{kx_2}] d\xi d\eta dx_2\\ \nonumber
&+&\iiint 4 \frac{d}{d\eta}[(2+\xi)^{-\frac{1}{10}}(2+\eta)^{-\frac{1}{10}}(1-H)u_{\xi}u_{k\xi} Q_0 ] + 4 \frac{d}{d\xi}[(2+\xi)^{-\frac{1}{10}}(2+\eta)^{-\frac{1}{10}}(1-H)u_{\eta}u_{k\xi} Q_0] \\ \nonumber
&-& 2 \frac{d}{d x_2}[(2+\xi)^{-\frac{1}{10}}(2+\eta)^{-\frac{1}{10}}(1-H)u_{x_2}u_{k\xi} Q_0]d\xi d\eta dx_2\\ \nonumber
&-& 2 \iiint Q_0((2+\xi)^{-\frac{1}{10}}(2+\eta)^{-\frac{1}{10}}(1-H)u_{k\xi}, u)Q_0(u_k,u)d\xi d\eta dx_2 \\ \nonumber
&-&  2\iiint (2+\xi)^{-\frac{1}{10}}(2+\eta)^{-\frac{1}{10}}(1-H)u_{k\xi}(\Box u + 4F'^2u_{\eta\eta}) Q_0(u_k,u)d\xi d\eta dx_2\\ \nonumber
&+& \iiint 8\frac{d}{d \eta} [(2+\xi)^{-\frac{1}{10}}(2+\eta)^{-\frac{1}{10}}(1-H)F' u_{\xi} u_{k\xi}u_{k\eta}]
- 4\frac{d}{d \xi}[(2+\xi)^{-\frac{1}{10}}(2+\eta)^{-\frac{1}{10}}(1-H)F' u_{\xi} u^2_{k\eta}] d\xi d\eta dx_2 \\ \nonumber
&-&8\iiint [(2+\xi)^{-\frac{1}{10}}(2+\eta)^{-\frac{1}{10}}(1-H)F' u_{\xi}]_{\eta} u_{k\xi}u_{k\eta}d\xi d\eta dx_2 +  \iiint 4[(2+\xi)^{-\frac{1}{10}}(2+\eta)^{-\frac{1}{10}}(1-H)F' u_{\xi}]_{\xi} u^2_{k\eta} d\xi d\eta dx_2 \\ \nonumber
&+& 4 \iiint \frac{d}{d \eta}[(2+\xi)^{-\frac{1}{10}}(2+\eta)^{-\frac{1}{10}}(1-H)F' u_{\eta} u^2_{k\xi}] d\xi d\eta dx_2 - 4 \iiint [(2+\xi)^{-\frac{1}{10}}(2+\eta)^{-\frac{1}{10}}(1-H)F'u_{\eta}]_{\eta} u^2_{k\xi}d\xi d\eta dx_2  \\ \nonumber
&-& 4\iiint \frac{d}{d\eta} [(2+\xi)^{-\frac{1}{10}}(2+\eta)^{-\frac{1}{10}} (1-H) F' u_{x_2}u_{k\xi}u_{kx_2} ] d\xi d\eta dx_2\\ \nonumber
& +& 4\iiint  [(2+\xi)^{-\frac{1}{10}}(2+\eta)^{-\frac{1}{10}} (1-H) F' u_{x_2}]_{\eta} u_{k\xi} u_{kx_2} d\xi d\eta dx_2 \\ \nonumber
&+& \iiint [(2+\xi)^{-\frac{1}{10}}(2+\eta)^{-\frac{1}{10}} (1-H) F' u_{x_2}] [\Box u_{k} + 4\Gamma^k(F'^2u_{\eta\eta})]u_{kx_2}d\xi d\eta dx_2 \\ \nonumber
&+& \frac{1}{2}\iiint (2+\xi)^{-\frac{1}{10}}(2+\eta)^{-\frac{1}{10}}\{ \frac{d}{dx_2}[(1-H) F' u_{x_2} u^2_{kx_2}] - [(1-H) F' u_{x_2}]_{x_2} u^2_{kx_2} \}d\xi d\eta dx_2\\ \nonumber
&+& \iiint (2+\xi)^{-\frac{1}{10}}(2+\eta)^{-\frac{1}{10}}u_{k\xi} (1-H)J_k d\xi d\eta dx_2 \\ \label{3.37}
&+&\iiint (2+\xi)^{-\frac{1}{10}}(2+\eta)^{-\frac{1}{10}}u_{k\xi} (1-H)4\Gamma^k(F'^2u_{\eta\eta}H)d\xi d\eta dx_2
\end{eqnarray}
Noting
\begin{eqnarray*} \nonumber
&&Q_0((2+\xi)^{-\frac{1}{10}}(2+\eta)^{-\frac{1}{10}}(1-H)u_{k\xi}, u) \\
&&= (2+\xi)^{-\frac{1}{10}}(2+\eta)^{-\frac{1}{10}}(1-H)Q_0(u_{k\xi}, u) + u_{k\xi}Q_0((2+\xi)^{-\frac{1}{10}}(2+\eta)^{-\frac{1}{10}}(1-H), u),
\end{eqnarray*}
then, we have
 \begin{eqnarray*}\nonumber
&&2Q_0((2+\xi)^{-\frac{1}{10}}(2+\eta)^{-\frac{1}{10}}(1-H)u_{k\xi}, u)Q_0(u, u_k) \\
&&= (2+\xi)^{-\frac{1}{10}}(2+\eta)^{-\frac{1}{10}}(1-H)\frac{d}{d\xi} Q^2_0(u, u_k) - 2(2+\xi)^{-\frac{1}{10}}(2+\eta)^{-\frac{1}{10}}(1-H)Q_0(u_k, u)Q_0(u_{\xi}, u_k) \\ \nonumber
&&+ 2 u_{k\xi} Q_0((2+\xi)^{-\frac{1}{10}}(2+\eta)^{-\frac{1}{10}}(1-H),u)Q(u_k, u)
 \end{eqnarray*}
  Furthermore, noting
 \begin{eqnarray*}
 Q_0(u, u_{k\eta}) &=& 2u_{\xi}u_{k\eta\eta} + 2 u_{\eta}u_{k\xi\eta} - u_{x_2}u_{k\eta x_2}\\ \nonumber
  &=& 2u_{\xi}u_{k\eta\eta} + \frac{1}{2} u_{\eta} [\Box u_k + u_{kx_2x_2} + 4\Gamma^k(F'^2u_{\eta\eta})] - u_{x_2}u_{k\eta x_2}
 \end{eqnarray*}
then we can rewrite $\Box u_k$ as follows
 \begin{eqnarray} \label{3.38}
 \Box u_k &=& (1-H)[Q_0(u, 2Q_0(u, u_k))+ 4F' Q_0(u, u_{k\eta}) + J_k - 4\Gamma^k(F'^2 u_{\eta\eta})] \\   \nonumber
 &=&(1-H) [Q_0(u,Q_0(u,u_k))+4F'(2u_{\xi}u_{k\eta\eta} + \frac{1}{2}u_{\eta}(\Box u_k + u_{k x_2 x_2} + 4 \Gamma^k(F'^2u_{\eta\eta})) - u_{x_2}u_{k\eta x_2})]\\ \nonumber
 &=& \frac{1-H}{1- 2(1-H)F'u_{\eta}} [Q_0(u, 2Q_0(u, u_k))+ 8F'u_{\xi}u_{k\eta \eta} + 2F'u_{\eta}\Gamma^k(F'^2u_{\eta\eta}) + 2F'u_{\eta}u_{kx_2x_2} - 4F'u_{x_2}u_{k\eta x_2} + J_k - 4\Gamma^k(F'^2 u_{\eta\eta})]
 \end{eqnarray}
 Substituting the above equations into (\ref{3.37}), we can get
\begin{eqnarray*}
&& \frac{1}{20} \iiint (2+\xi)^{-\frac{11}{10}}(2+\eta)^{-\frac{1}{10}}u^2_{kx_2} d\xi d\eta dx_2 + \frac{1}{5} \iiint (2+\xi)^{-\frac{1}{10}}(2+\eta)^{-\frac{11}{10}}u^2_{k\xi} d\xi d\eta dx_2 \\ \nonumber
&=&   \frac{1}{2} \iiint \frac{d}{d\xi} [(2+\xi)^{-\frac{1}{10}}(2+\eta)^{-\frac{1}{10}}u^2_{kx_2}] d\xi d\eta dx_2 - 2  \frac{d}{d\eta }[(2+\xi)^{-\frac{1}{10}}(2+\eta)^{-\frac{1}{10}} u^2_{k\xi}]d\xi d\eta dx_2\\
&-&  \iiint \frac{d}{dx_2}[(2+\xi)^{-\frac{1}{10}}(2+\eta)^{-\frac{1}{10}}u_{k\xi}u_{kx_2}] d\xi d\eta dx_2\\
&+&\iiint 4 \frac{d}{d\eta}[(2+\xi)^{-\frac{1}{10}}(2+\eta)^{-\frac{1}{10}}(1-H)u_{\xi}u_{k\xi} Q_0 ] + 4 \frac{d}{d\xi}[(2+\xi)^{-\frac{1}{10}}(2+\eta)^{-\frac{1}{10}}(1-H)u_{\eta}u_{k\xi} Q_0] \\
&-& 2 \frac{d}{d x_2}[(2+\xi)^{-\frac{1}{10}}(2+\eta)^{-\frac{1}{10}}(1-H)u_{x_2}u_{k\xi} Q_0]d\xi d\eta dx_2\\
&-& 2 \iiint  (2+\xi)^{-\frac{1}{10}}(2+\eta)^{-\frac{1}{10}}(1-H)\frac{d}{d\xi} Q^2_0(u, u_k) - 2(2+\xi)^{-\frac{1}{10}}(2+\eta)^{-\frac{1}{10}}(1-H)Q_0(u_k, u)Q_0(u_{\xi}, u_k) \\
&&+ 2 u_{k\xi} Q_0((2+\xi)^{-\frac{1}{10}}(2+\eta)^{-\frac{1}{10}}(1-H),u)Q(u_k, u)d\xi d\eta dx_2 \\
&-& 2 \iiint (2+\xi)^{-\frac{1}{10}}(2+\eta)^{-\frac{1}{10}}(1-H)u_{k\xi}(\Box u + 4F'^2u_{\eta\eta}) Q_0(u_k,u)d\xi d\eta dx_2\\
&+& \iiint 8\frac{d}{d \eta} [(2+\xi)^{-\frac{1}{10}}(2+\eta)^{-\frac{1}{10}}(1-H)F' u_{\xi} u_{k\xi}u_{k\eta}]
- 4\frac{d}{d \xi}[(2+\xi)^{-\frac{1}{10}}(2+\eta)^{-\frac{1}{10}}(1-H)F' u_{\xi} u^2_{k\eta}] d\xi d\eta dx_2 \\
&-&8\iiint [(2+\xi)^{-\frac{1}{10}}(2+\eta)^{-\frac{1}{10}}(1-H)F' u_{\xi}]_{\eta} u_{k\xi}u_{k\eta}d\xi d\eta dx_2 +  \iiint 4[(2+\xi)^{-\frac{1}{10}}(2+\eta)^{-\frac{1}{10}}(1-H)F' u_{\xi}]_{\xi} u^2_{k\eta} d\xi d\eta dx_2 \\ \nonumber
&+& 4 \iiint \frac{d}{d \eta}[(2+\xi)^{-\frac{1}{10}}(2+\eta)^{-\frac{1}{10}}(1-H)F' u_{\eta} u^2_{k\xi}] d\xi d\eta dx_2 - 4 \iiint [(2+\xi)^{-\frac{1}{10}}(2+\eta)^{-\frac{1}{10}}(1-H)F'u_{\eta}]_{\eta} u^2_{k\xi}d\xi d\eta dx_2  
\end{eqnarray*}
\begin{eqnarray*}
&-& 4\iiint \frac{d}{d\eta} [(2+\xi)^{-\frac{1}{10}}(2+\eta)^{-\frac{1}{10}} (1-H) F' u_{x_2}u_{k\xi}u_{kx_2} ] d\xi d\eta dx_2\\
& +& 4\iiint  [(2+\xi)^{-\frac{1}{10}}(2+\eta)^{-\frac{1}{10}} (1-H) F' u_{x_2}]_{\eta} u_{k\xi} u_{kx_2} d\xi d\eta dx_2 \\
&+& \iiint  P [Q_0(u, 2Q_0(u, u_k))+ 8F'u_{\xi}u_{k\eta \eta} + 2F'u_{\eta}u_{kx_2x_2} - 4F'u_{x_2}u_{k\eta x_2} + J_k + 8F'u_{\eta}\Gamma^k(F'^2u_{\eta\eta}) - 4\Gamma^k(F'^2u_{\eta\eta})] u_{kx_2}d\xi d\eta dx_2 \\
&+& \frac{1}{2}\iiint (2+\xi)^{-\frac{1}{10}}(2+\eta)^{-\frac{1}{10}}\{ \frac{d}{dx_2}[(1-H) F' u_{x_2} u^2_{kx_2}] - [(1-H) F' u_{x_2}]_{x_2} u^2_{kx_2} \}d\xi d\eta dx_2\\
&+& \iiint (2+\xi)^{-\frac{1}{10}}(2+\eta)^{-\frac{1}{10}}u_{k\xi} (1-H)J_k d\xi d\eta dx_2\\
&+&\iiint (2+\xi)^{-\frac{1}{10}}(2+\eta)^{-\frac{1}{10}}u_{k\xi} (1-H)4\Gamma^k(F'^2u_{\eta\eta}H)d\xi d\eta dx_2
\end{eqnarray*}
where
\begin{eqnarray} \label{3.39}
P(\xi, \eta, x_2) = (2+\xi)^{-\frac{1}{10}}(2+\eta)^{-\frac{1}{10}} \frac{(1-H)^2 F' u_{x_2}}{1- 2(1-H)F'u_{\eta}}.
\end{eqnarray}
Noting
 \begin{eqnarray}\nonumber
&&\iiint (2+\xi)^{-\frac{1}{10}}(2+\eta)^{-\frac{1}{10}}(1-H)\frac{d}{d\xi} Q^2_0(u, u_k)d\xi d\eta dx_2\\  \nonumber
 && = \iiint \frac{d}{d\xi} [(2+\xi)^{-\frac{1}{10}}(2+\eta)^{-\frac{1}{10}}(1-H) Q^2_0(u, u_k)]d\xi d\eta dx_2 \\ \label{3.40}
 &&- \iiint [(2+\xi)^{-\frac{1}{10}}(2+\eta)^{-\frac{1}{10}}(1-H)]_{\xi} Q^2_0(u, u_k)d\xi d\eta dx_2
\end{eqnarray}
Then
\begin{eqnarray*}
&& \frac{1}{20} \iiint (2+\xi)^{-\frac{11}{10}}(2+\eta)^{-\frac{1}{10}}u^2_{kx_2} d\xi d\eta dx_2 + \frac{1}{5} \iiint (2+\xi)^{-\frac{1}{10}}(2+\eta)^{-\frac{11}{10}}u^2_{k\xi} d\xi d\eta dx_2 \\ \nonumber
&=&    \frac{1}{2} \iiint \frac{d}{d\xi} [(2+\xi)^{-\frac{1}{10}}(2+\eta)^{-\frac{1}{10}}u^2_{kx_2}] d\xi d\eta dx_2 - 2  \frac{d}{d\eta }[(2+\xi)^{-\frac{1}{10}}(2+\eta)^{-\frac{1}{10}} u^2_{k\xi}]d\xi d\eta dx_2\\
&-&  \iiint \frac{d}{dx_2}[(2+\xi)^{-\frac{1}{10}}(2+\eta)^{-\frac{1}{10}}u_{k\xi}u_{kx_2}] d\xi d\eta dx_2\\
&+&\iiint 4 \frac{d}{d\eta}[(2+\xi)^{-\frac{1}{10}}(2+\eta)^{-\frac{1}{10}}(1-H)u_{\xi}u_{k\xi} Q_0 ] + 4 \frac{d}{d\xi}[(2+\xi)^{-\frac{1}{10}}(2+\eta)^{-\frac{1}{10}}(1-H)u_{\eta}u_{k\xi} Q_0] \\
&-& 2 \frac{d}{d x_2}[(2+\xi)^{-\frac{1}{10}}(2+\eta)^{-\frac{1}{10}}(1-H)u_{x_2}u_{k\xi} Q_0]d\xi d\eta dx_2\\
&-& 2 \iiint \frac{d}{d\xi} [(2+\xi)^{-\frac{1}{10}}(2+\eta)^{-\frac{1}{10}}(1-H) Q^2_0(u, u_k)]d\xi d\eta dx_2 \\
&+& 2\iiint [(2+\xi)^{-\frac{1}{10}}(2+\eta)^{-\frac{1}{10}}(1-H)]_{\xi} Q^2_0(u, u_k)d\xi d\eta dx_2 \\
&+& \iiint 4(2+\xi)^{-\frac{1}{10}}(2+\eta)^{-\frac{1}{10}}(1-H)Q_0(u_k, u)Q_0(u_{\xi}, u_k) - 4 u_{k\xi} Q_0((2+\xi)^{-\frac{1}{10}}(2+\eta)^{-\frac{1}{10}}(1-H),u)Q(u_k, u)d\xi d\eta dx_2 \\
&-&  2\iiint (2+\xi)^{-\frac{1}{10}}(2+\eta)^{-\frac{1}{10}}(1-H)u_{k\xi}(\Box u + 4F'^2u_{\eta\eta}) Q_0(u_k,u)d\xi d\eta dx_2\\
&+& \iiint 8\frac{d}{d \eta} [(2+\xi)^{-\frac{1}{10}}(2+\eta)^{-\frac{1}{10}}(1-H)F' u_{\xi} u_{k\xi}u_{k\eta}]
- 4\frac{d}{d \xi}[(2+\xi)^{-\frac{1}{10}}(2+\eta)^{-\frac{1}{10}}(1-H)F' u_{\xi} u^2_{k\eta}] d\xi d\eta dx_2 \\
&-&8\iiint [(2+\xi)^{-\frac{1}{10}}(2+\eta)^{-\frac{1}{10}}(1-H)F' u_{\xi}]_{\eta} u_{k\xi}u_{k\eta}d\xi d\eta dx_2 +  \iiint 4[(2+\xi)^{-\frac{1}{10}}(2+\eta)^{-\frac{1}{10}}(1-H)F' u_{\xi}]_{\xi} u^2_{k\eta} d\xi d\eta dx_2 
\end{eqnarray*}
\begin{eqnarray*}
&+& 4 \iiint \frac{d}{d \eta}[(2+\xi)^{-\frac{1}{10}}(2+\eta)^{-\frac{1}{10}}(1-H)F' u_{\eta} u^2_{k\xi}] d\xi d\eta dx_2 - 4 \iiint [(2+\xi)^{-\frac{1}{10}}(2+\eta)^{-\frac{1}{10}}(1-H)F'u_{\eta}]_{\eta} u^2_{k\xi}d\xi d\eta dx_2  \\ \nonumber
&-& 4\iiint \frac{d}{d\eta} [(2+\xi)^{-\frac{1}{10}}(2+\eta)^{-\frac{1}{10}} (1-H) F' u_{x_2}u_{k\xi}u_{kx_2} ] d\xi d\eta dx_2\\
& +& 4\iiint  [(2+\xi)^{-\frac{1}{10}}(2+\eta)^{-\frac{1}{10}} (1-H) F' u_{x_2}]_{\eta} u_{k\xi} u_{kx_2} d\xi d\eta dx_2 \\
&+& \iiint  P [Q_0(u, 2Q_0(u, u_k))+ 8F'u_{\xi}u_{k\eta \eta} + 2F'u_{\eta}u_{kx_2x_2} - 4F'u_{x_2}u_{k\eta x_2} + J_k + 8F'u_{\eta}\Gamma^k(F'^2u_{\eta\eta}) - 4\Gamma^k(F'^2u_{\eta\eta})] u_{kx_2}d\xi d\eta dx_2 \\
&+& \frac{1}{2}\iiint (2+\xi)^{-\frac{1}{10}}(2+\eta)^{-\frac{1}{10}}\{ \frac{d}{dx_2}[(1-H) F' u_{x_2} u^2_{kx_2}] - [(1-H) F' u_{x_2}]_{x_2} u^2_{kx_2} \}d\xi d\eta dx_2\\
&+& \iiint (2+\xi)^{-\frac{1}{10}}(2+\eta)^{-\frac{1}{10}}u_{k\xi} (1-H)J_k d\xi d\eta dx_2\\
&+&\iiint (2+\xi)^{-\frac{1}{10}}(2+\eta)^{-\frac{1}{10}}u_{k\xi} (1-H)4\Gamma^k(F'^2u_{\eta\eta}H)d\xi d\eta dx_2
\end{eqnarray*}
In the following we also deal with the term
\begin{eqnarray}\nonumber
&&\iiint P[Q_0(u, 2Q_0(u, u_k))+ 8F'u_{\xi}u_{k\eta \eta} + 2F'u_{\eta}u_{kx_2x_2} - 4F'u_{x_2}u_{k\eta x_2} + J_k + 8F'u_{\eta}\Gamma^k(F'^2u_{\eta\eta}) - 4\Gamma^k(F'^2u_{\eta\eta})] u_{kx_2}d\xi d\eta dx_2 \\ \label{3.41}
&&\doteq III_1 + \cdots + III_7
\end{eqnarray}
\begin{eqnarray*}
III_1 &=& \iiint PQ_0(u, 2Q_0(u, u_k))u_{kx_2}d\xi d\eta dx_2 = \iiint P [4 u_{\xi}Q_{0\eta} + 4 u_{\eta}Q_{0\xi} - 2u_{x_2}Q_{0x_2}]u_{kx_2}d\xi d\eta dx_2 \\
&=& \iiint \frac{d}{d\eta}[4P u_{\xi}Q_0u_{kx_2}]- 4Pu_{\xi}u_{kx_2\eta} Q_0 - 4[Pu_{\xi}]_{\eta} Q_0 u_{kx_2} d\xi d\eta dx_2\\
&+&   \iiint \frac{d}{d\xi}[4P u_{\eta}Q_0u_{kx_2}]- 4Pu_{\eta}u_{kx_2\xi} Q_0 - 4[Pu_{\eta}]_{\xi} Q_0 u_{kx_2} d\xi d\eta dx_2\\
&-&    \iiint \frac{d}{dx_2}[2P u_{x_2}Q_0u_{kx_2}]- 2Pu_{\eta}u_{kx_2\xi} Q_0 - 2[Pu_{x_2}]_{x_2} Q_0 u_{kx_2} d\xi d\eta dx_2\\
&=&  \iiint \frac{d}{d\eta}[4P u_{\xi}Q_0u_{kx_2}] + \frac{d}{d\xi}[4P u_{\eta}Q_0u_{kx_2}] - \frac{d}{dx_2}[2P u_{x_2}Q_0u_{kx_2}] d\xi d\eta dx_2\\
&-&   2\iiint PQ_0(u, u_{kx_2}) Q_0d\xi d\eta dx_2 \\
&-&    \iiint \{4[Pu_{\eta}]_{\xi}  + 4[P(x)u_{\xi}]_{\eta}  - 2[Pu_{x_2}]_{x_2}\} Q_0 u_{kx_2} d\xi d\eta dx_2
\end{eqnarray*}
Using the integration by parts, we have
\begin{eqnarray*}
 &&2\iiint PQ_0(u, u_{kx_2}) Q_0d\xi d\eta dx_2 =  \iiint P \frac{d}{dx_2}Q^2_0(u, u_k)d\xi d\eta dx_2 \\
&&= \iiint \frac{d}{dx_2}[PQ^2_0(u, u_k)] d\xi d\eta dx_2 - \iiint P_{x_2}Q^2_0(u, u_k)d\xi d\eta dx_2
\end{eqnarray*}
Then,
\begin{eqnarray*}
III_1 &=&  \iiint \frac{d}{d\eta}[4P u_{\xi}Q_0u_{kx_2}] + \frac{d}{d\xi}[4P u_{\eta}Q_0u_{kx_2}] - \frac{d}{dx_2}[2P u_{x_2}Q_0u_{kx_2}] d\xi d\eta dx_2\\
&-& \iiint \frac{d}{dx_2}[PQ^2_0(u, u_k)] d\xi d\eta dx_2 + \iiint P_{x_2}Q^2_0(u, u_k)d\xi d\eta dx_2  \\
&-&  \iiint \{4[Pu_{\eta}]_{\xi}  + 4[Pu_{\xi}]_{\eta}  - 2[Pu_{x_2}]_{x_2}\} Q_0 u_{kx_2} d\xi d\eta dx_2,\\
III_2 &=& \iiint P8F'u_{\xi}u_{k\eta \eta} u_{kx_2}d\xi d\eta dx_2\\
&=&  8\iiint\frac{d}{d\eta}[PF'u_{\xi}u_{k\eta} u_{kx_2}] - [PF'u_{\xi}u_{k\eta} u_{kx_2\eta}] - [PF'u_{\xi}]_{\eta}u_{k\eta} u_{kx_2}d\xi d\eta dx_2\\
&=&  \iiint 8\frac{d}{d\eta}[PF'u_{\xi}u_{k\eta} u_{kx_2}] - 4\frac{d}{dx_2}[PF'u_{\xi}u^2_{k\eta}] + 4[PF'u_{\xi}]_{x_2}u^2_{k\eta} - 8[PF'u_{\xi}]_{\eta}u_{k\eta} u_{kx_2}d\xi d\eta dx_2\\
III_3 &=& \iiint P2F'u_{\eta}u_{kx_2x_2}u_{kx_2}d\xi d\eta dx_2=\iiint P F'u_{\eta}\frac{d}{dx_2}u^2_{kx_2}d\xi d\eta dx_2\\
&=& \iiint \frac{d}{dx_2}[P F'u_{\eta}u^2_{kx_2}] -  [P F'u_{\eta}]_{x_2}u^2_{kx_2}d\xi d\eta dx_2\\
III_4 &=& \iiint P 4F'u_{x_2}u_{k\eta x_2}u_{kx_2}d\xi d\eta dx_2 = \iiint 2P F'u_{x_2}\frac{d}{d\eta}u^2_{kx_2}d\xi d\eta dx_2\\
&=& 2\iiint\frac{d}{d\eta}[P F'u_{x_2}u^2_{kx_2}] -  [P F'u_{x_2}]_{\eta}u^2_{kx_2}d\xi d\eta dx_2
\end{eqnarray*}
Then
\begin{eqnarray*}
&&\iiint P[Q_0(u, 2Q_0(u, u_k))+ 8F'u_{\xi}u_{k\eta \eta} + 2F'u_{\eta}u_{kx_2x_2} - 4F'u_{x_2}u_{k\eta x_2} + J_k + 8F'u_{\eta}\Gamma^k(F'^2u_{\eta\eta}) - 4\Gamma^k(F'^2u_{\eta\eta})] u_{kx_2}d\xi d\eta dx_2 \\
&=&  \iiint \frac{d}{d\eta}[4P u_{\xi}Q_0u_{kx_2}] + \frac{d}{d\xi}[4P u_{\eta}Q_0u_{kx_2}] - \frac{d}{dx_2}[2P u_{x_2}Q_0u_{kx_2}] d\xi d\eta dx_2\\
&-& \iiint \frac{d}{dx_2}[PQ^2_0(u, u_k)] d\xi d\eta dx_2 + \iiint P_{x_2}Q^2_0(u, u_k)d\xi d\eta dx_2  \\
&-&  \iiint \{4[Pu_{\eta}]_{\xi}  + 4[P(x)u_{\xi}]_{\eta}  - 2[Pu_{x_2}]_{x_2}\} Q_0 u_{kx_2} d\xi d\eta dx_2\\
&+&  \iiint 8\frac{d}{d\eta}[PF'u_{\xi}u_{k\eta} u_{kx_2}] - 4\frac{d}{dx_2}[PF'u_{\xi}u^2_{k\eta}] + 4[PF'u_{\xi}]_{x_2}u^2_{k\eta} - 8[PF'u_{\xi}]_{\eta}u_{k\eta} u_{kx_2}d\xi d\eta dx_2\\
&+& \iiint \frac{d}{dx_2}[P F'u_{\eta}u^2_{kx_2}] -  [P F'u_{\eta}]_{x_2}u^2_{kx_2}d\xi d\eta dx_2\\
&+& 2\iiint\frac{d}{d\eta}[P F'u_{x_2}u^2_{kx_2}] -  [P F'u_{x_2}]_{\eta}u^2_{kx_2}d\xi d\eta dx_2\\
&-& 4\iiint\frac{d}{d\eta}[P F'^2u_{k\eta}u^2_{kx_2}] - P_{\eta}F'^2u_{k\eta}u_{kx_2} -  \frac{1}{2}\frac{d}{dx_2}[P F'^2u^2_{k\eta}] +  \frac{1}{2}P_{x_2}F'^2u^2_{k\eta} d\xi d\eta dx_2\\
&+& 8\iiint\frac{d}{d\eta}[P F'^3u_{k\eta}u^2_{kx_2}] - P_{\eta}F'^3u_{k\eta}u_{kx_2} -  \frac{1}{2}\frac{d}{dx_2}[P F'^3u^2_{k\eta}] +  \frac{1}{2}P_{x_2}F'^3u^2_{k\eta} d\xi d\eta dx_2\\
&+& \iiint P J_k u_{kx_2}d\xi d\eta dx_2
\end{eqnarray*}
Finally, we can get
\begin{eqnarray*}
&& \frac{1}{20} \iiint (2+\xi)^{-\frac{11}{10}}(2+\eta)^{-\frac{1}{10}}u^2_{kx_2} d\xi d\eta dx_2 + \frac{1}{5} \iiint (2+\xi)^{-\frac{1}{10}}(2+\eta)^{-\frac{11}{10}}u^2_{k\xi} d\xi d\eta dx_2 \\ \nonumber
&=&   \frac{1}{2} \iiint \frac{d}{d\xi} [(2+\xi)^{-\frac{1}{10}}(2+\eta)^{-\frac{1}{10}}u^2_{kx_2}] d\xi d\eta dx_2 - 2  \frac{d}{d\eta }[(2+\xi)^{-\frac{1}{10}}(2+\eta)^{-\frac{1}{10}} u^2_{k\xi}]d\xi d\eta dx_2\\
&-&  \iiint \frac{d}{dx_2}[(2+\xi)^{-\frac{1}{10}}(2+\eta)^{-\frac{1}{10}}u_{k\xi}u_{kx_2}] d\xi d\eta dx_2\\
&+&\iiint 4 \frac{d}{d\eta}[(2+\xi)^{-\frac{1}{10}}(2+\eta)^{-\frac{1}{10}}(1-H)u_{\xi}u_{k\xi} Q_0 ] + 4 \frac{d}{d\xi}[(2+\xi)^{-\frac{1}{10}}(2+\eta)^{-\frac{1}{10}}(1-H)u_{\eta}u_{k\xi} Q_0] \\
&-& 2 \frac{d}{d x_2}[(2+\xi)^{-\frac{1}{10}}(2+\eta)^{-\frac{1}{10}}(1-H)u_{x_2}u_{k\xi} Q_0]d\xi d\eta dx_2\\
&-& 2 \iiint \frac{d}{d\xi} [(2+\xi)^{-\frac{1}{10}}(2+\eta)^{-\frac{1}{10}}(1-H) Q^2_0(u, u_k)]d\xi d\eta dx_2 \\
&+& 2\iiint [(2+\xi)^{-\frac{1}{10}}(2+\eta)^{-\frac{1}{10}}(1-H)]_{\xi} Q^2_0(u, u_k)d\xi d\eta dx_2 \\
&+& \iiint 4(2+\xi)^{-\frac{1}{10}}(2+\eta)^{-\frac{1}{10}}(1-H)Q_0(u_k, u)Q_0(u_{\xi}, u_k) - 4 u_{k\xi} Q_0((2+\xi)^{-\frac{1}{10}}(2+\eta)^{-\frac{1}{10}}(1-H),u)Q(u_k, u)d\xi d\eta dx_2 \\
&-&  2\iiint (2+\xi)^{-\frac{1}{10}}(2+\eta)^{-\frac{1}{10}}(1-H)u_{k\xi}(\Box u + 4F'^2u_{\eta\eta}) Q_0(u_k,u)d\xi d\eta dx_2\\
&+& \iiint 8\frac{d}{d \eta} [(2+\xi)^{-\frac{1}{10}}(2+\eta)^{-\frac{1}{10}}(1-H)F' u_{\xi} u_{k\xi}u_{k\eta}] - 4\frac{d}{d \xi}[(2+\xi)^{-\frac{1}{10}}(2+\eta)^{-\frac{1}{10}}(1-H)F' u_{\xi} u^2_{k\eta}] d\xi d\eta dx_2 \\
&-&8\iiint [(2+\xi)^{-\frac{1}{10}}(2+\eta)^{-\frac{1}{10}}(1-H)F' u_{\xi}]_{\eta} u_{k\xi}u_{k\eta}d\xi d\eta dx_2 +  \iiint 4[(2+\xi)^{-\frac{1}{10}}(2+\eta)^{-\frac{1}{10}}(1-H)F' u_{\xi}]_{\xi} u^2_{k\eta} d\xi d\eta dx_2 \\ \nonumber
&+& 4 \iiint \frac{d}{d \eta}[(2+\xi)^{-\frac{1}{10}}(2+\eta)^{-\frac{1}{10}}(1-H)F' u_{\eta} u^2_{k\xi}] d\xi d\eta dx_2 - 4 \iiint [(2+\xi)^{-\frac{1}{10}}(2+\eta)^{-\frac{1}{10}}(1-H)F'u_{\eta}]_{\eta} u^2_{k\xi}d\xi d\eta dx_2  \\ \nonumber
&-& 4\iiint \frac{d}{d\eta} [(2+\xi)^{-\frac{1}{10}}(2+\eta)^{-\frac{1}{10}} (1-H) F' u_{x_2}u_{k\xi}u_{kx_2} ] d\xi d\eta dx_2\\
& +& 4\iiint  [(2+\xi)^{-\frac{1}{10}}(2+\eta)^{-\frac{1}{10}} (1-H) F' u_{x_2}]_{\eta} u_{k\xi} u_{kx_2} d\xi d\eta dx_2 \\
&+& \frac{1}{2}\iiint (2+\xi)^{-\frac{1}{10}}(2+\eta)^{-\frac{1}{10}}\{ \frac{d}{dx_2}[(1-H) F' u_{x_2} u^2_{kx_2}] - [(1-H) F' u_{x_2}]_{x_2} u^2_{kx_2} \}d\xi d\eta dx_2\\
&+& \iiint (2+\xi)^{-\frac{1}{10}}(2+\eta)^{-\frac{1}{10}}u_{k\xi} (1-H)J_k d\xi d\eta dx_2\\
&+& \iiint \frac{d}{d\eta}[4P u_{\xi}Q_0u_{kx_2}] + \frac{d}{d\xi}[4P u_{\eta}Q_0u_{kx_2}] - \frac{d}{dx_2}[2P u_{x_2}Q_0u_{kx_2}] d\xi d\eta dx_2\\
&-& \iiint \frac{d}{dx_2}[PQ^2_0(u, u_k)] d\xi d\eta dx_2 + \iiint P_{x_2}(x)Q^2_0(u, u_k)d\xi d\eta dx_2  \\
&-&  \iiint \{4[Pu_{\eta}]_{\xi}  + 4[Pu_{\xi}]_{\eta}  - 2[Pu_{x_2}]_{x_2}\} Q_0 u_{kx_2} d\xi d\eta dx_2\\
&+&  \iiint 8\frac{d}{d\eta}[PF'u_{\xi}u_{k\eta} u_{kx_2}] - 4\frac{d}{dx_2}[PF'u_{\xi}u^2_{k\eta}] + 4[PF'u_{\xi}]_{x_2}u^2_{k\eta} - 8[PF'u_{\xi}]_{\eta}u_{k\eta} u_{kx_2}d\xi d\eta dx_2\\
&+& \iiint \frac{d}{dx_2}[P F'u_{\eta}u^2_{kx_2}] -  [PF'u_{\eta}]_{x_2}u^2_{kx_2}d\xi d\eta dx_2\\
&+& 2\iiint\frac{d}{d\eta}[P F'u_{x_2}u^2_{kx_2}] -  [P F'u_{x_2}]_{\eta}u^2_{kx_2}d\xi d\eta dx_2
\end{eqnarray*}
\begin{eqnarray*}
&+& \iiint P J_k u_{kx_2}d\xi d\eta dx_2 + \iiint (2+\xi)^{-\frac{1}{10}}(2+\eta)^{-\frac{1}{10}}u_{k\xi} (1-H)4\Gamma^k(F'^2u_{\eta\eta}H)d\xi d\eta dx_2\\
&+& \iiint P[8 F' u_{\eta}\Gamma^k(F'^2 u_{\eta\eta}) - 4 \Gamma^k(F'^2 u_{\eta\eta})]u_{kx_2} d\xi d\eta dx_2
\end{eqnarray*}
Therefore,
\begin{eqnarray*}
&&  \iiint (2+\xi)^{-\frac{11}{10}}(2+\eta)^{-\frac{1}{10}}u^2_{kx_2} d\xi d\eta dx_2 +  \iiint (2+\xi)^{-\frac{1}{10}}(2+\eta)^{-\frac{11}{10}}u^2_{k\xi} d\xi d\eta dx_2 \\ \nonumber
&\lesssim& \varepsilon + \iiint [(2+\xi)^{-\frac{1}{10}}(2+\eta)^{-\frac{1}{10}}(1-H)]_{\xi} Q^2_0(u, u_k)d\xi d\eta dx_2 \\
&+& \iiint (2+\xi)^{-\frac{1}{10}}(2+\eta)^{-\frac{1}{10}}(1-H)Q_0(u_k, u)Q_0(u_{\xi}, u_k) - u_{k\xi} Q_0((2+\xi)^{-\frac{1}{10}}(2+\eta)^{-\frac{1}{10}}(1-H),u)Q(u_k, u)d\xi d\eta dx_2 \\
&+&  \iiint (2+\xi)^{-\frac{1}{10}}(2+\eta)^{-\frac{1}{10}}(1-H)u_{k\xi}(\Box u + 4F'^2u_{\eta\eta}) Q_0(u_k,u)d\xi d\eta dx_2\\
&+& \iiint [(2+\xi)^{-\frac{1}{10}}(2+\eta)^{-\frac{1}{10}}(1-H)F' u_{\xi}]_{\eta} u_{k\xi}u_{k\eta}d\xi d\eta dx_2 +  \iiint [(2+\xi)^{-\frac{1}{10}}(2+\eta)^{-\frac{1}{10}}(1-H)F' u_{\xi}]_{\xi} u^2_{k\eta} d\xi d\eta dx_2 \\ \nonumber
&+&  \iiint [(2+\xi)^{-\frac{1}{10}}(2+\eta)^{-\frac{1}{10}}(1-H)F'u_{\eta}]_{\eta} u^2_{k\xi}d\xi d\eta dx_2  \\
&+& \iiint  [(2+\xi)^{-\frac{1}{10}}(2+\eta)^{-\frac{1}{10}} (1-H) F' u_{x_2}]_{\eta} u_{k\xi} u_{kx_2} d\xi d\eta dx_2
+\iiint (2+\xi)^{-\frac{1}{10}}(2+\eta)^{-\frac{1}{10}}[(1-H) F' u_{x_2}]_{x_2} u^2_{kx_2} d\xi d\eta dx_2\\
&+& \iiint (2+\xi)^{-\frac{1}{10}}(2+\eta)^{-\frac{1}{10}}u_{k\xi} (1-H)J_k d\xi d\eta dx_2\\
&+&\iiint P_{x_2}(x)Q^2_0(u, u_k)d\xi d\eta dx_2  +  \iiint \{4[Pu_{\eta}]_{\xi}  + [Pu_{\xi}]_{\eta}  - 2[Pu_{x_2}]_{x_2}\} Q_0 u_{kx_2} d\xi d\eta dx_2\\
&+&  \iiint [PF'u_{\xi}]_{x_2}u^2_{k\eta} - 8[PF'u_{\xi}]_{\eta}u_{k\eta} u_{kx_2}d\xi d\eta dx_2 + \iiint  [PF'u_{\eta}]_{x_2}u^2_{kx_2}d\xi d\eta dx_2
+ \iiint [P F'u_{x_2}]_{\eta}u^2_{kx_2}d\xi d\eta dx_2 \\&+& \iiint P J_k u_{kx_2}d\xi d\eta dx_2+ \iiint (2+\xi)^{-\frac{1}{10}}(2+\eta)^{-\frac{1}{10}}u_{k\xi} (1-H)4\Gamma^k(F'^2u_{\eta\eta}H)d\xi d\eta dx_2\\
&+& \iiint P[8 F' u_{\eta}\Gamma^k(F'^2 u_{\eta\eta}) - 4 \Gamma^k(F'^2 u_{\eta\eta})]u_{kx_2} d\xi d\eta dx_2\\
&\doteq& \bar{A}_1 + \cdots + \bar{A}_{14}.
\end{eqnarray*}
In the following, we will estimate $\bar{A}_i, i = 1, \cdots, 14$ respectively.
Using the similar procedures to (\ref{3.9}) and (\ref{3.15}), (\ref{3.19}), we can get the estimations of $\bar{A}_1$ and $\bar{A}_2$
\begin{eqnarray}\label{3.42}
&&\bar{A}_1 = \iiint [(2+\xi)^{-\frac{1}{10}}(2+\eta)^{-\frac{1}{10}}(1-H)]_{\xi} Q^2_0(u, u_k)d\xi d\eta dx_2 \lesssim \varepsilon + (e_s + e_s^{\frac{3}{2}} + e_s^{\frac{3}{2}}\tilde{e}_s + e^2_s) E_s, \\ \nonumber
&&\bar{A}_2 = \iiint (2+\xi)^{-\frac{1}{10}}(2+\eta)^{-\frac{1}{10}}(1-H)Q_0(u_k, u)Q_0(u_{\xi}, u_k) - u_{k\xi} Q_0((2+\xi)^{-\frac{1}{10}}(2+\eta)^{-\frac{1}{10}}(1-H),u)Q(u_k, u)d\xi d\eta dx_2\\ \label{3.43}
&&\lesssim (\tilde{e}_se_s^{\frac{3}{2}} +  e^{2}_s) E_s.
\end{eqnarray}
Noting the estimate of $A_2$, we can get
\begin{eqnarray} \label{3.44}
\bar{A}_3 = \iiint (2+\xi)^{-\frac{1}{10}}(2+\eta)^{-\frac{1}{10}}(1-H)u_{k\xi}(\Box u + 4F'^2u_{\eta\eta}) Q_0(u_k,u)d\xi d\eta dx_2 \lesssim   e^{\frac{1}{2}}_s E_s + e_s E_s  + \tilde{e}_se_s^{\frac{3}{2}}E_s  + e^{2}_s E_s.
\end{eqnarray}
Noting,
\begin{eqnarray*}
&&\bar{A}_{41} = \iiint [(2+\xi)^{-\frac{1}{10}}(2+\eta)^{-\frac{1}{10}}(1-H)F' u_{\xi}]_{\eta} u_{k\xi}u_{k\eta}d\xi d\eta dx_2
\\
&&\bar{A}_{42} =\iiint [(2+\xi)^{-\frac{1}{10}}(2+\eta)^{-\frac{1}{10}}(1-H)F' u_{\xi}]_{\xi} u^2_{k\eta} d\xi d\eta dx_2
\end{eqnarray*}
we have
\begin{eqnarray}\nonumber
\bar{A}_{41} &\lesssim& (1+e_s)[\iiint [(2+\xi)^{-\frac{31}{10}}(2+\eta)^{-\frac{11}{10}}|u_{\xi}]u_{k\xi}u_{k\eta}|d\xi d\eta dx_2
+ \iiint [(2+\xi)^{-\frac{11}{10}}(2+\eta)^{-\frac{1}{10}} u_{\xi\eta}] u_{k\xi}u_{k\eta}d\xi d\eta dx_2 \\ \nonumber
&+& \iiint [(2+\xi)^{-\frac{31}{10}}(2+\eta)^{-\frac{1}{10}}|H_{\eta}F' u_{\xi}] u_{k\xi}u_{k\eta}|d\xi d\eta dx_2]\\ \nonumber
&\lesssim & (1+e_s)e^{\frac{1}{2}}_s[E_s + \iiint [(2+\xi)^{-\frac{21}{10}}(2+\eta)^{-\frac{1}{10}}|(Q_{0\eta}+4F'(\xi)u_{\eta\eta}) u_{\xi}] u_{k\xi}u_{k\eta}|d\xi d\eta dx_2]]\\ \nonumber
&\lesssim & (1+e_s)e^{\frac{1}{2}}_s[E_s + \iiint (2+\xi)^{-\frac{3}{2}}(2+\eta)^{-\frac{17}{20}} (e^{\frac{1}{2}}_s + e_s) (2+\xi)^{\delta}\tilde{e}_s|u_{k\xi}u_{k\eta}|d\xi d\eta dx_2\\ \nonumber
&\lesssim & (1+e_s)e^{\frac{1}{2}}_s[E_s +  \tilde{e}_s(e^{\frac{1}{2}}_s + e_s)E_s].
\end{eqnarray}
Furthermore, noting Proposition 3 and Corollary 1,
\begin{eqnarray}\nonumber
\bar{A}_{42} &=&\iiint [(2+\xi)^{-\frac{1}{10}}(2+\eta)^{-\frac{1}{10}}(1-H)F' u_{\xi}]_{\xi} u^2_{k\eta} d\xi d\eta dx_2\\ \nonumber
&\lesssim &\iiint [(2+\xi)^{-\frac{1}{10}}(2+\eta)^{-\frac{1}{10}}(1-H)|F'' u_{\xi}|] u^2_{k\eta} d\xi d\eta dx_2 + \iiint [(2+\xi)^{-\frac{1}{10}}(2+\eta)^{-\frac{1}{10}}(1-H)|F' u_{\xi \xi}|] u^2_{k\eta} d\xi d\eta dx_2\\ \nonumber
& + &  \iiint [(2+\xi)^{-\frac{1}{10}}(2+\eta)^{-\frac{1}{10}}|H_{\xi}F' u_{\xi}|] u^2_{k\eta} d\xi d\eta dx_2
\\ \nonumber
&\lesssim &\iiint [(2+\xi)^{-\frac{21}{10}}(2+\eta)^{-\frac{1}{10}}(1-H) |u_{\xi}|] u^2_{k\eta} d\xi d\eta dx_2 + \iiint [(2+\xi)^{-\frac{11}{10}}(2+\eta)^{-\frac{1}{10}}(1-H) |u_{\xi \xi}|] u^2_{k\eta} d\xi d\eta dx_2\\ \nonumber
& + &  \iiint [(2+\xi)^{-\frac{11}{10}}(2+\eta)^{-\frac{1}{10}}|(Q_{0\xi}+4F'(\xi)u_{\eta\xi}) u_{\xi}|] u^2_{k\eta} d\xi d\eta dx_2
\\ \nonumber
&\lesssim & (1+e_s)\iiint (2+\xi)^{-\frac{21}{10}}(2+\eta)^{-\frac{1}{10}}(2+\xi )^{\delta}\tilde{e}_s u^2_{k\eta} d\xi d\eta dx_2 \\ \nonumber
& + &  \iiint [(2+\xi)^{-\frac{11}{10}}(2+\eta)^{-\frac{1}{10}}(Q_{0\xi} + F'(\xi)u_{\eta\xi})][(2+\xi )^{\frac{1}{4}}(2+\eta )^{-\frac{1}{4}}e_s + (2+\xi)^{\frac{1}{2}}e_s + e_s] u^2_{k\eta} d\xi d\eta dx_2\\ \nonumber
&\lesssim & (1+e_s)\tilde{e}_sE_s
\\ \nonumber
& + &  \iiint (2+\xi)^{-\frac{11}{10}}(2+\eta)^{-\frac{1}{10}}[ (1+(2+\xi)^{\frac{1}{2}})e^{\frac{3}{2}}_s + (2+\xi)^{-\frac{1}{2}}(2+\eta)^{-\frac{1}{2}}e_s + (2+\eta)^{-\frac{1}{2}}e^{\frac{1}{2}}_s)][1+(2+\xi)^{\frac{1}{2}}]e_s u^2_{k\eta} d\xi d\eta dx_2\\ \label{3.45}
&\lesssim & [(1+e_s)\tilde{e}_s  + e_s + e^{\frac{3}{2}}_s + e_s^2 + e^{\frac{5}{2}}_s]E_s.
\end{eqnarray}

Denote
$$\bar{A}_5 =  \iiint [(2+\xi)^{-\frac{1}{10}}(2+\eta)^{-\frac{1}{10}}(1-H)F'u_{\eta}]_{\eta} u^2_{k\xi}d\xi d\eta dx_2.$$
Noting Corollary 1, we can get
\begin{eqnarray}\nonumber
\bar{A}_5 &= & \iiint [(2+\xi)^{-\frac{1}{10}}(2+\eta)^{-\frac{1}{10}}(1-H)F'u_{\eta \eta}] u^2_{k\xi}d\xi d\eta dx_2 + \iiint [(2+\xi)^{-\frac{1}{10}}(2+\eta)^{-\frac{1}{10}}H_{\eta}F'u_{\eta} ] u^2_{k\xi}d\xi d\eta dx_2 \\ \nonumber
&\lesssim & (1 + e_s )\iiint [(2+\xi)^{-\frac{11}{10}}(2+\eta)^{-\frac{1}{10}}[(2+\eta)^{-1}|\Gamma u_{\eta}+u_{\eta\eta}| + (2+\xi)^{\frac{1}{2}}(2+\eta)^{-\frac{1}{2}}|u_{x_2\eta}|] u^2_{k\xi}d\xi d\eta dx_2 \\ \nonumber
 &+& \iiint [(2+\xi)^{-\frac{1}{10}}(2+\eta)^{-\frac{1}{10}}H_{\eta}F'u_{\eta} ] u^2_{k\xi}d\xi d\eta dx_2 \\ \nonumber
&\lesssim & (1 + e_s )\iiint [(2+\xi)^{-\frac{11}{10}}(2+\eta)^{-\frac{1}{10}}[(2+\eta)^{-\frac{3}{2}} + (2+\xi)^{\frac{1}{2}}(2+\eta)^{-1}]e_s^{\frac{1}{2}} u^2_{k\xi}d\xi d\eta dx_2 \\ \nonumber
&+& \iiint (2+\xi)^{-\frac{11}{10}}(2+\eta)^{-\frac{1}{10}}|[Q_{0\eta}+4F'(\xi)u_{\eta\eta}]u_{\eta}|  u^2_{k\xi}d\xi d\eta dx_2 
\end{eqnarray}
\begin{eqnarray} \nonumber
&\lesssim & (1 + e_s )\iiint [(2+\xi)^{-\frac{3}{5}}(2+\eta)^{-1}e^{\frac{1}{2}}_s u^2_{k\xi}d\xi d\eta dx_2 \\ \nonumber
&+& \iiint (2+\xi)^{-\frac{11}{10}}(2+\eta)^{-\frac{1}{10}}[|u_{\xi \eta}||u_{\eta}|+|u_{\xi}||u_{\eta\eta}|+|u_{x_2 \eta}||u_{x_2}| + (2+\eta)^{-\frac{1}{2}}e^{\frac{1}{2}}_s](2+\eta)^{-\frac{1}{2}}e^{\frac{1}{2}}_s u^2_{k\xi}d\xi d\eta dx_2 \\ \nonumber
&\lesssim & (1 + e_s )e^{\frac{1}{2}}_s E_s
+ \iiint (2+\xi)^{-\frac{11}{10}}(2+\eta)^{-\frac{1}{10}}[(2+\xi)^{-\frac{1}{2}}(2+\eta)^{-\frac{1}{2}}e_s + (2+\eta)^{-\frac{1}{2}}e^{\frac{1}{2}}_s](2+\eta)^{-\frac{1}{2}}e^{\frac{1}{2}}_s u^2_{k\xi}d\xi d\eta dx_2 \\ \label{3.46}
&\lesssim & (1 + e_s )e^{\frac{1}{2}}_s E_s  + (e_s + e^{\frac{3}{2}}_s) E_s.
 \end{eqnarray}

Denote
\begin{eqnarray}\nonumber
\bar{A}_6 &=& \iiint  [(2+\xi)^{-\frac{1}{10}}(2+\eta)^{-\frac{1}{10}} (1-H) F' u_{x_2}]_{\eta} u_{k\xi} u_{kx_2} d\xi d\eta dx_2 \\ \label{3.47}
&+& \iiint (2+\xi)^{-\frac{1}{10}}(2+\eta)^{-\frac{1}{10}}[(1-H) F' u_{x_2}]_{x_2} u^2_{kx_2} d\xi d\eta dx_2 \doteq \bar{A}_{61} + \bar{A}_{62}.
\end{eqnarray}
Then, noting the estimate of $\bar{A}_5$,
\begin{eqnarray} \nonumber
\bar{A}_{61} &\lesssim& \iiint  (2+\xi)^{-\frac{11}{10}}(2+\eta)^{-\frac{11}{10}} (1-H) |u_{x_2}| |u_{k\xi} u_{kx_2}| d\xi d\eta dx_2 \\ \nonumber
&+ &\iiint  [(2+\xi)^{-\frac{11}{10}}(2+\eta)^{-\frac{1}{10}} (1-H) |u_{x_2\eta}|] |u_{k\xi} u_{kx_2} |d\xi d\eta dx_2 + \iiint  [(2+\xi)^{-\frac{11}{10}}(2+\eta)^{-\frac{1}{10}} |H_{\eta} u_{x_2}| u_{k\xi} u_{kx_2}| d\xi d\eta dx_2\\ \nonumber
&\lesssim & (1+e_s) \iiint  [(2+\xi)^{-\frac{11}{10}}(2+\eta)^{-\frac{3}{5}}  e^{\frac{1}{2}}_s |u_{k\xi} u_{kx_2}| d\xi d\eta dx_2 + \iiint  [(2+\xi)^{-\frac{11}{10}}(2+\eta)^{-\frac{7}{20}} |H_{\eta}|  e^{\frac{1}{2}}_s| u_{k\xi} u_{kx_2} |d\xi d\eta dx_2\\ \nonumber
&\lesssim & (1+e_s)e^{\frac{1}{2}}_s E_s + \iiint  [(2+\xi)^{-\frac{11}{10}}(2+\eta)^{-\frac{7}{20}} |(Q_{0\eta}+4F'(\xi)u_{\eta\eta})|  e^{\frac{1}{2}}_s |u_{k\xi} u_{kx_2}| d\xi d\eta dx_2
\\ \nonumber
&\lesssim & (1+e_s)e^{\frac{1}{2}}_s E_s + \iiint  [(2+\xi)^{-\frac{11}{10}}(2+\eta)^{-\frac{7}{20}} [(2+\eta)^{-1}e_s + (2+\eta)^{-\frac{1}{2}}e^{\frac{1}{2}}_s] e^{\frac{1}{2}}_s |u_{k\xi} u_{kx_2}| d\xi d\eta dx_2
\\ \nonumber
&\lesssim & (1+e_s)e^{\frac{1}{2}}_s E_s + (e_s + e^{\frac{3}{2}}_s) \iiint  [(2+\xi)^{-\frac{11}{10}}(2+\eta)^{-\frac{17}{20}} |u_{k\xi} u_{kx_2}| d\xi d\eta dx_2\\ \label{3.48}
&\lesssim &(1+e_s)e^{\frac{1}{2}}_s E_s +  (e_s + e^{\frac{3}{2}}_s) E_s.
\end{eqnarray}
and
\begin{eqnarray}\nonumber
\bar{A}_{62} 
&\lesssim &\iiint  [(2+\xi)^{-\frac{11}{10}}(2+\eta)^{-\frac{1}{10}} (1-H) |u_{x_2 x_2}|] u^2_{kx_2} d\xi d\eta dx_2 + \iiint  [(2+\xi)^{-\frac{11}{10}}(2+\eta)^{-\frac{1}{10}} |H_{x_2} u_{x_2}| u^2_{kx_2} d\xi d\eta dx_2\\ \nonumber
&\lesssim & (1+e_s) [\iiint  (2+\xi)^{-\frac{27}{20}}(2+\eta)^{-\frac{7}{20}}  e^{\frac{1}{2}}_s  u^2_{kx_2} d\xi d\eta dx_2 + \iiint  (2+\xi)^{-\frac{27}{20}}(2+\eta)^{-\frac{7}{20}} H_{x_2}  e^{\frac{1}{2}}_s  u^2_{kx_2} d\xi d\eta dx_2]\\ \nonumber
&\lesssim & (1+e_s)e^{\frac{1}{2}}_s E_s + (1+e_s)\iiint  [(2+\xi)^{-\frac{11}{10}}(2+\eta)^{-\frac{7}{20}} |Q_{0 x_2}+4F'(\xi)u_{\eta x_2}|  e^{\frac{1}{2}}_s u^2_{kx_2} d\xi d\eta dx_2\\ \nonumber
&\lesssim & (1+e_s)e^{\frac{1}{2}}_s E_s + (1+e_s)(e_s + e^{\frac{1}{2}}_s) e^{\frac{1}{2}}_s\iiint  [(2+\xi)^{-\frac{27}{20}}(2+\eta)^{-\frac{7}{20}} u^2_{kx_2} d\xi d\eta dx_2\\ \label{3.49}
&\lesssim & (1+e_s)e^{\frac{1}{2}}_s E_s + (1+e_s)(e_s + e^{\frac{3}{2}}_s) E_s.
\end{eqnarray}

Denote
\begin{eqnarray*}
 \bar{A}_7 = \iiint (2+\xi)^{-\frac{1}{10}}(2+\eta)^{-\frac{1}{10}}u_{k\xi} (1-H)J_k d\xi d\eta dx_2.
 \end{eqnarray*}
Similar to the estimate of the term $A_4$, we will first estimate $J_k$. Let
\begin{eqnarray*}
 &&J_{k1} = \sum\limits_{k_1+k_2+k_3+k_4 = k, k_3 < k, k_4 < k} \Gamma^{k_1}(1-H) Q_0(\Gamma^{k_2}u, Q_0(\Gamma^{k_3}u, \Gamma^{k_4}u))\\
 && J_{k2} = \sum\limits_{0\leq k_1\leq k} \Gamma^{k_1}(1-H)\Gamma^{k-k_1}[F' Q_0(u, u_{\eta})]\\
  && J_{k3} = \sum\limits_{0\leq k_1\leq k} \Gamma^{k_1}(1-H)\Gamma^{k-k_1}[F''u^2_\eta].
\end{eqnarray*}
For $J_{k1}$, when $k_1 \leq [\frac{1+s}{2}]$, we have
\begin{eqnarray}\nonumber
 &&|J_{k1}| = |\sum\limits_{k_1+k_2+k_3+k_4 = k, k_3 < k, k_4 < k} \Gamma^{k_1}(1-H) Q_0(\Gamma^{k_2}u, Q_0(\Gamma^{k_3}u, \Gamma^{k_4}u))|\\ \label{3.50}
 && \leq (1 + e_s)[|(\Gamma^{k_2}u)_{\xi}Q_{0\eta}(\Gamma^{k_3}u, \Gamma^{k_4}u)| + |(\Gamma^{k_2}u)_{\eta}Q_{0\xi}(\Gamma^{k_3}u, \Gamma^{k_4}u)| + |(\Gamma^{k_2}u)_{x_2}Q_{0x_2}(\Gamma^{k_3}u, \Gamma^{k_4}u)|.
\end{eqnarray}
Without loss of generality, we assume $|k_3|,|k_4| \leq [\frac{s+1}{2}]$ and noting (\ref{2.21}), then
\begin{eqnarray}\nonumber
 &&J_{k1} \lesssim(1 + e_s)|(\Gamma^{k_2}u)_{\xi}||Q_{0\eta}(\Gamma^{k_3}u, \Gamma^{k_4}u)| + |(\Gamma^{k_2}u)_{\eta}||Q_{0\xi}(\Gamma^{k_3}u, \Gamma^{k_4}u)| + |(\Gamma^{k_2}u)_{x_2}||Q_{0x_2}(\Gamma^{k_3}u, \Gamma^{k_4}u)|\\ \label{3.51}
 && \doteq J_{k11} + J_{k12} + J_{k13}.
 \end{eqnarray}
 Noting Lemma 3.2, Lemma 3.3 and Proposition 1,
 \begin{eqnarray*}\nonumber
 J_{k11} &\lesssim & (1 + e_s)|(\Gamma^{k_2}u)_{\xi}||Q_{0\eta}(\Gamma^{k_3}u, \Gamma^{k_4}u)| \\ \nonumber
 &\lesssim& (1 + e_s)|(\Gamma^{k_2}u)_{\xi}|(2+\eta)^{-1}(\partial_{\eta} + \Gamma_4 - x_2 \partial_{x_2})Q_0(\Gamma^{k_3}u, \Gamma^{k_4}u) \\
  &\lesssim & (1 + e_s)(2+\eta)^{-1}|(\Gamma^{k_2}u)_{\xi}|(\partial_{\eta} + \Gamma_4 - x_2 \partial_{x_2})[(\Gamma^{k_3}u)_{\eta}(\Gamma^{k_4}u)_{\xi}
 + (\Gamma^{k_3}u)_{x_2}(\Gamma^{k_4}u)_{x_2}] \\ \nonumber
 &\lesssim& (1 + e_s)(2+\eta)^{-1}|(\Gamma^{k_2}u)_{\xi}|[e_s + x_2 \partial_{x_2}[(\Gamma^{k_3}u)_{\eta}(\Gamma^{k_4}u)_{\xi}
 + (\Gamma^{k_3}u)_{x_2}(\Gamma^{k_4}u)_{x_2}] ]   \\ \nonumber
 &\lesssim &(1 + e_s)(2+\eta)^{-1}|(\Gamma^{k_2}u)_{\xi}|[e_s + x_2 [(\Gamma^{k_3}u)_{\eta x_2}(\Gamma^{k_4}u)_{\xi} + (\Gamma^{k_3}u)_{\eta}(\Gamma^{k_4}u)_{\xi x_2}
 + (\Gamma^{k_3}u)_{x_2 x_2}(\Gamma^{k_4}u)_{x_2}] ]   \\ \nonumber
&\lesssim & (1 + e_s)(2+\eta)^{-1}|(\Gamma^{k_2}u)_{\xi}|[e_s + x_2(\Gamma^{k_3}u)_{ x_2\eta}(\Gamma^{k_4}u)_{\xi} + (2+\eta)^{\frac{1}{2}}(2+\xi)^{\frac{1}{2}} [(2+\eta)^{-\frac{1}{2}}(2+\xi)^{-\frac{1}{2}} e_s ]]\\ \nonumber
&\lesssim & (1 + e_s)(2+\eta)^{-1}|(\Gamma^{k_2}u)_{\xi}|[e_s + x_2(\Gamma^{k_3}u)_{ x_2\eta}(\Gamma^{k_4}u)_{\xi}].
\end{eqnarray*}
By the definition of $\Gamma$ operator, we have
\begin{eqnarray*}
2x_2 \partial_{\eta} (\Gamma^{k_3}u)_{x_2}(\Gamma^{k_4}u)_{\xi} &=& (\Gamma_6 - \xi \partial_{x_2})(\Gamma^{k_3}u)_{x_2}(\Gamma^{k_4}u)_{\xi}\\  \nonumber
&=& (\Gamma^{k_4}u)_{\xi}\Gamma_6(\Gamma^{k_3}u)_{x_2} - \xi (\Gamma^{k_4}u)_{\xi}(\Gamma^{k_3}u)_{x_2 x_2} \\ \nonumber
&=&(\Gamma^{k_4}u)_{\xi}\Gamma_6(\Gamma^{k_3}u)_{x_2} - \frac{1}{2}[\Gamma_5 - x_2\partial_{x_2}](\Gamma^{k_4}u)_{\xi}(\Gamma^{k_3}u)_{x_2 x_2} \\ \nonumber
&=&(\Gamma^{k_4}u)_{\xi}\Gamma_6(\Gamma^{k_3}u)_{x_2} - \frac{1}{2}\Gamma_5(\Gamma^{k_4}u)_{\xi}(\Gamma^{k_3}u)_{x_2 x_2} - \frac{1}{2}x_2\partial_{x_2}(\Gamma^{k_4}u)_{\xi}(\Gamma^{k_3}u)_{x_2 x_2}.
\end{eqnarray*}
Then, noting Lemma 3, Proposition 1 and Proposition 2,
\begin{eqnarray*}
|x_2 \partial_{\eta} (\Gamma^{k_3}u)_{x_2}(\Gamma^{k_4}u)_{\xi} |
&\lesssim& |(\Gamma^{k_4}u)_{\xi}\Gamma_6(\Gamma^{k_3}u)_{x_2}| +|\Gamma_5(\Gamma^{k_4}u)_{\xi}(\Gamma^{k_3}u)_{x_2 x_2} | +|x_2\partial_{x_2}(\Gamma^{k_4}u)_{\xi}(\Gamma^{k_3}u)_{x_2 x_2}| \\ \nonumber
&\lesssim& e_s  + (2+\eta)^{\frac{1}{2}}(2+\xi)^{\frac{1}{2}}|\partial_{x_2}(\Gamma^{k_4}u)_{\xi}(\Gamma^{k_3}u)_{x_2 x_2}| \\ \nonumber
&\lesssim& e_s.
\end{eqnarray*}
Therefore, we can obtain
 \begin{eqnarray}\label{3.52}
 J_{k11} &\lesssim &  (1 + e_s)e_s(2+\eta)^{-1}|(\Gamma^{k_2}u)_{\xi}|.
 \end{eqnarray}
Meanwhile, noting Proposition 2, we can get
 \begin{eqnarray}\nonumber
 J_{k12} &=& C(1 + e_s)|(\Gamma^{k_2}u)_{\eta}||Q_{0\xi}(\Gamma^{k_3}u, \Gamma^{k_4}u)| \\ \nonumber
 &\lesssim& (1 + e_s)|(\Gamma^{k_2}u)_{\eta}|[|(\Gamma^{k_3}u)_{\xi\xi}(\Gamma^{k_4}u)_{\eta}| + |(\Gamma^{k_3}u)_{\xi}(\Gamma^{k_4}u)_{\xi\eta} | + |(\Gamma^{k_3}u)_{x_2\xi}(\Gamma^{k_4}u)_{x_2}| ]
 \\ \nonumber
 &\lesssim& (1 + e_s)|(\Gamma^{k_2}u)_{\eta}|[(2+\xi)^{-\frac{3}{4}}(2+\eta)^{\frac{1}{4}} e^{\frac{1}{2}}_s(2+\xi)^{\frac{1}{4}}(2+\eta)^{-\frac{3}{4}}e^{\frac{1}{2}}_s
 +(2+\xi)^{-\frac{1}{2}}(2+\eta)^{-\frac{1}{2}}e_s]\\ \label{3.53}
 &\lesssim& (1 + e_s)e_s(2+\xi)^{-\frac{1}{2}}(2+\eta)^{-\frac{1}{2}}|(\Gamma^{k_2}u)_{\eta}|.
\end{eqnarray}
and noting the estimate of $k_{k11}$
 \begin{eqnarray}\nonumber
 J_{k13} &\lesssim & (1 + e_s)|(\Gamma^{k_2}u)_{x_2}||Q_{0}(\Gamma^{k_3}u_{x_2}, \Gamma^{k_4}u)| \\ \nonumber
 &\lesssim& (1 + e_s)(2+\eta)^{-1}|(\Gamma^{k_2}u)_{x_2}||\Gamma\Gamma^{k_3}u_{x_2}||\Gamma\Gamma^{k_4}u| \\ \nonumber
 &\lesssim& (1 + e_s)|(\Gamma^{k_2}u)_{x_2}|(2+\xi)^{-\frac{1}{4}}(2+\eta)^{-\frac{1}{4}}e^{\frac{1}{2}}_s(2+\xi)^{\frac{1}{4}}(2+\eta)^{\frac{1}{4}}e^{\frac{1}{2}}_s
\\ \label{3.54}
 &\lesssim& (1 + e_s)e_s|(2+\eta)^{-1}(\Gamma^{k_2}u)_{x_2}|.
\end{eqnarray}
When $k_1 \geq [\frac{1+s}{2}]$, we have
\begin{eqnarray}\nonumber
 &&|J_{k1}| \lesssim|\sum\limits_{k'_1+k_2+k_3+k_4 = k, k'_1 < k_1, k_3 < k, k_4 < k} \Gamma^{k'_1}(Q_0 + F'u_{\eta}) Q_0(\Gamma^{k_2}u, Q_0(\Gamma^{k_3}u, \Gamma^{k_4}u))|\\ \nonumber
 &&\leq \sum\limits_{k'_1+k_2+k_3+k_4 = k, k'_1 < k_1, k_3 < k, k_4 < k} [|\Gamma^{k'_1}Q_0(u,u)| + |\Gamma^{k'_1}(F'u_{\eta})|]| Q_0(\Gamma^{k_2}u, Q_0(\Gamma^{k_3}u, \Gamma^{k_4}u))|.
\end{eqnarray}
In the following we will estimate the above two parts separately.
\begin{eqnarray}\nonumber
&&\sum\limits_{k'_1+k_2+k_3+k_4 = k, k'_1 < k_1, k_3 < k, k_4 < k} |\Gamma^{k'_1}Q_0(u,u)|
| Q_0(\Gamma^{k_2}u, Q_0(\Gamma^{k_3}u, \Gamma^{k_4}u))|\\ \nonumber
&&= \sum\limits_{k_2+k_3+k_4 +k_5+k_6 \leq k} |(\Gamma^{k_5}u_{\xi}\Gamma^{k_6}u_{\eta} + \Gamma^{k_5}u_{x_2}\Gamma^{k_6}u_{x_2})|
| Q_0(\Gamma^{k_2}u, Q_0(\Gamma^{k_3}u, \Gamma^{k_4}u))|.
\end{eqnarray}
When $k_5 \leq k_6$, noting Lemma 2, we have
\begin{eqnarray}\nonumber
&&|(\Gamma^{k_5}u_{\xi}\Gamma^{k_6}u_{\eta} + \Gamma^{k_5}u_{x_2}\Gamma^{k_6}u_{x_2})|
| Q_0(\Gamma^{k_2}u, Q_0(\Gamma^{k_3}u, \Gamma^{k_4}u))|\\ \nonumber
&\lesssim& |\Gamma^{k_6}u_{\eta}||(\Gamma^{k_5}u_{\xi}| (2+\eta)^{-1}|\Gamma\Gamma^{k_2}u||Q_0(\Gamma \Gamma^{k_3}u, \Gamma^{k_4} u)|\\ \nonumber
&\lesssim& |\Gamma^{k_6}u_{\eta}||(\Gamma^{k_5}u_{\xi}| (2+\eta)^{-1}|\Gamma\Gamma^{k_2}u|[|\Gamma \Gamma^{k_3}u_{\xi} \Gamma^{k_4} u_{\eta}|+|\Gamma \Gamma^{k_3}u_{\eta} \Gamma^{k_4} u_{\xi}|+|\Gamma \Gamma^{k_3}u_{x_2} \Gamma^{k_4} u_{x_2}|]|\\ \nonumber
&\lesssim& |\Gamma^{k_6}u_{\eta}|(2+\xi)^{-\frac{3}{4}}(2+\eta)^{-\frac{3}{4}}[(2+\xi)^{\delta}e_s^{\frac{3}{2}}\tilde{e}_s + (2+\xi)^{\frac{1}{4}}(2+\eta)^{\frac{1}{4}}(2+\xi)^{-\frac{1}{2}}(2+\eta)^{-\frac{1}{2}}e_s^{2}] \\  \label{3.55}
&\lesssim&  |\Gamma^{k_6}u_{\eta}|(2+\xi)^{-\frac{1}{2}}(2+\eta)^{-\frac{3}{4}}[e_s^{\frac{3}{2}}\tilde{e}_s + e_s^{2}].
\end{eqnarray}
For the case $|k_5|\geq |k_6|$, by Corollary 1 and Lemma 3, we have
\begin{eqnarray}\nonumber
&&|(\Gamma^{k_5}u_{\xi}\Gamma^{k_6}u_{\eta} + \Gamma^{k_5}u_{x_2}\Gamma^{k_6}u_{x_2})|
| Q_0(\Gamma^{k_2}u, Q_0(\Gamma^{k_3}u, \Gamma^{k_4}u))|\\ \nonumber
&\lesssim& |(\Gamma^{k_5}u_{\xi}| (2+\eta)^{-\frac{1}{2}}e_s^{\frac{1}{2}}(2+\eta)^{-1}|\Gamma\Gamma^{k_2}u||Q_0(\Gamma \Gamma^{k_3}u, \Gamma^{k_4} u)|\\ \nonumber
&\lesssim& |(\Gamma^{k_5}u_{\xi}| (2+\eta)^{-\frac{3}{2}}e_s^{\frac{1}{2}}|\Gamma\Gamma^{k_2}u|[|\Gamma \Gamma^{k_3}u_{\xi} \Gamma^{k_4} u_{\eta}|+|\Gamma \Gamma^{k_3}u_{\eta} \Gamma^{k_4} u_{\xi}|+|\Gamma \Gamma^{k_3}u_{x_2} \Gamma^{k_4} u_{x_2}|]|\\ \nonumber
&\lesssim& |\Gamma^{k_5}u_{\xi}|(2+\eta)^{-\frac{3}{2}}e_s^{\frac{1}{2}}[(2+\xi)^{-\frac{3}{4}}(2+\eta)^{\frac{1}{4}}(2+\eta)^{-\frac{1}{2}}(2+\xi)^{\frac{1}{4}}(2+\eta)^{\frac{1}{4}} + (2+\xi)^{-\frac{1}{4}}(2+\eta)^{-\frac{1}{4}}]e_s^{\frac{3}{2}}\\ \label{3.56}
&\lesssim& |\Gamma^{k_5}u_{\xi}|(2+\eta)^{-2}e_s^2.
\end{eqnarray}
It is easy to get the estimate of second part $ |\Gamma^{k'_1}(F'u_{\eta})|| Q_0(\Gamma^{k_2}u, Q_0(\Gamma^{k_3}u, \Gamma^{k_4}u))|$. Then, we can get the estimate of $J_{k1}$.

In the following, we will give the estimation of $J_{k2}$. When $k_1 \leq [\frac{k}{2}]$, the other case can be get easily.
\begin{eqnarray*}
 J_{k2} &\leq& \sum\limits_{0\leq k_1 + k_2 = k} |\Gamma^{k_1}(1-H)||\Gamma^{k_2}[F' Q_0(u, u_{\eta})]|\\ \nonumber
 &\lesssim& (1+e_s) \sum\limits_{0\leq k_3 + k_4 = k_2} |\Gamma^{k_3}F'||\Gamma^{k_4}Q_0(u, u_{\eta})|\\ \nonumber
 &\lesssim&  (1+e_s) (2+\xi)^{-1}\sum\limits_{k_5 + k_6 = k_4}|Q_0(\Gamma^{k_5}u, \Gamma^{k_6}u_{\eta})|\\
 &\lesssim& (1+e_s) (2+\xi)^{-1}\sum\limits_{k_5 + k_6 = k_4}[|\Gamma^{k_5}u_{\xi}||\Gamma^{k_6}u_{\eta\eta}| + |\Gamma^{k_5}u_{\eta}|| \Gamma^{k_6}u_{\eta\xi}| + |\Gamma^{k_5}u_{x_2}|| \Gamma^{k_6}u_{\eta x_2}|).
 \end{eqnarray*}
For the case of $|k_5| \geq |k_6| $, noting Corollary 1,
\begin{eqnarray}\nonumber
| \Gamma^{k_6}u_{\eta\eta}| &\lesssim& (2+\eta)^{-1}|(\partial_{\eta} + \Gamma_4 - x_2\partial_{x_2})(\Gamma^{k_6}u)_{\eta}|\\ \nonumber
&\lesssim& (2+\eta)^{-1}[|(\Gamma^{k_6}u)_{\eta\eta}| + |\Gamma_4(\Gamma^{k_6}u)_{\eta}|] + (2+\eta)^{-\frac{1}{2}}(2+\xi)^{\frac{1}{2}}(\Gamma^{k_6}u)_{x_2\eta}\\ \nonumber
&\lesssim& (2+\eta)^{-\frac{3}{2}}e_s^{\frac{1}{2}} + (2+\eta)^{-1}(2+\xi)^{\frac{1}{2}}e_s^{\frac{1}{2}}.
\end{eqnarray}
Then, we can get
\begin{eqnarray}\label{3.57}
 J_{k2} \lesssim(1+e_s) (2+\xi)^{-\frac{1}{2}}(2+\eta)^{-\frac{1}{2}}e_s^{\frac{1}{2}}[|\Gamma^{k_5}u_{\xi}| + |\Gamma^{k_5}u_{\eta}| + |\Gamma^{k_5}u_{x_2}|].
\end{eqnarray}
For the case of $|k_5| \leq |k_6| $, noting Lemma 2.2 and Lemma 2.3, we can have
\begin{eqnarray}\nonumber
 J_{k2}  &\lesssim & (1+e_s) (1+\xi)^{-1}\sum\limits_{k_5 + k_6 = k_4}|Q_0(\Gamma^{k_5}u,\Gamma^{k_6}u_{\eta}|
\\  \nonumber
 &\lesssim& (1+e_s) (2+\xi)^{-1}(2+\eta)^{-1} [|\Gamma\Gamma^{k_5}u_{\xi}||\Gamma\Gamma^{k_6}u_{\eta}|+|\Gamma\Gamma^{k_5}u||\Gamma\Gamma^{k_6}u_{\xi\eta}|] \\ \nonumber
 &\lesssim& (2+\xi)^{-\frac{1}{2}}(2+\eta)^{-\frac{1}{2}}(1+e_s)e_s^{\frac{1}{2}} [|\Gamma\Gamma^{k_6}u_{\eta}| + |\Gamma\Gamma^{k_6}u_{\xi\eta}|].
\end{eqnarray}
Moreover, it is easily to get the estimate
\begin{eqnarray}\label{3.58}
J_{k3} &=& \sum\limits_{0\leq k_1 + k_2 = k} \Gamma^{k_1}(1-H)\Gamma^{k_2}[F''u^2_\eta]\lesssim (1+e_s)(2+\xi)^{-2} (2+\eta)^{-\frac{1}{2}}\sum\limits_{0\leq  k_2 \leq k}|\Gamma^{k_2}u_{\eta}|
\end{eqnarray}
Then, the estimation of $A_4$ can be get
\begin{eqnarray}\label{3.59}
\bar{A}_7 \lesssim (e_s^{\frac{1}{2}} + e_s^{2})E_s.
\end{eqnarray}

 Denote
 $$\bar{A}_8 = \iiint P_{x_2}(x)Q^2_0(u, u_k)d\xi d\eta dx_2  +  \iiint \{4[Pu_{\eta}]_{\xi}  + [Pu_{\xi}]_{\eta}  - 2[Pu_{x_2}]_{x_2}\} Q_0 u_{kx_2} d\xi d\eta dx_2 \doteq \bar{A}_{81} + \bar{A}_{82}$$
 where,
 \begin{eqnarray}\nonumber
 \bar{A}_{81}& =& \iiint P_{x_2}(x)Q^2_0(u, u_k)d\xi d\eta dx_2\\ \nonumber
 &\lesssim & \iiint (2+\xi)^{-\frac{1}{10}}(2+\eta)^{-\frac{1}{10}}|[\frac{(1-H)^2 F' u_{x_2}}{1- 2(1-H)F'u_{\eta}}]_{x_2}|[u^2_{\xi}u^2_{k\eta}+ u^2_{\eta}u^2_{k\xi}+u^2_{x_2}u^2_{kx_2}]d\xi d\eta dx_2\\ \nonumber
 &\lesssim & (1+e_s) \iiint (2+\xi)^{-\frac{11}{10}}(2+\eta)^{-\frac{1}{10}}|H_{x_2} u_{x_2}|[u^2_{\xi}u^2_{k\eta}+ u^2_{\eta}u^2_{k\xi}+u^2_{x_2}u^2_{kx_2}]d\xi d\eta dx_2\\ \nonumber
 &+ & (1+e_s)^2 \iiint (2+\xi)^{-\frac{11}{10}}(2+\eta)^{-\frac{1}{10}}|H_{x_2}u_{x_2}u_{\eta} |[u^2_{\xi}u^2_{k\eta}+ u^2_{\eta}u^2_{k\xi}+u^2_{x_2}u^2_{kx_2}]d\xi d\eta dx_2\\ \nonumber
 &+ & (1+e_s)^3 \iiint (2+\xi)^{-\frac{11}{10}}(2+\eta)^{-\frac{1}{10}}|u_{x_2}u_{x_2\eta}|[u^2_{\xi}u^2_{k\eta}+ u^2_{\eta}u^2_{k\xi}+u^2_{x_2}u^2_{kx_2}]d\xi d\eta dx_2\\
 &\lesssim & (1+e_s) \iiint (2+\xi)^{-\frac{11}{10}}(2+\eta)^{-\frac{1}{10}}|(Q_{0x_2}+4F'(\xi)u_{\eta x_2}) u_{x_2}|[u^2_{\xi}u^2_{k\eta}+ u^2_{\eta}u^2_{k\xi}+u^2_{x_2}u^2_{kx_2}]d\xi d\eta dx_2\\ \nonumber
 &+ & (1+e_s)^2 \iiint (2+\xi)^{-\frac{11}{10}}(2+\eta)^{-\frac{1}{10}}|(Q_{0x_2}+4F'(\xi)u_{\eta x_2})u_{x_2}u_{\eta} |[u^2_{\xi}u^2_{k\eta}+ u^2_{\eta}u^2_{k\xi}+u^2_{x_2}u^2_{kx_2}]d\xi d\eta dx_2\\ \nonumber
 &+ & (1+e_s)^3 \iiint (2+\xi)^{-\frac{11}{10}}(2+\eta)^{-\frac{1}{10}}|u_{x_2}u_{x_2\eta}|[u^2_{\xi}u^2_{k\eta}+ u^2_{\eta}u^2_{k\xi}+u^2_{x_2}u^2_{kx_2}]d\xi d\eta dx_2
 \end{eqnarray}
 \begin{eqnarray} 
\nonumber
 &\lesssim & (1+e_s) \iiint (2+\xi)^{-\frac{11}{10}}(2+\eta)^{-\frac{1}{10}}|Q_{0x_2} u_{x_2}|[u^2_{\xi}u^2_{k\eta}+ u^2_{\eta}u^2_{k\xi}+u^2_{x_2}u^2_{kx_2}]d\xi d\eta dx_2 + (1+e_s)e^2_s E_s \\ \nonumber
 &+ & (1+e_s)^2 \iiint (2+\xi)^{-\frac{11}{10}}(2+\eta)^{-\frac{1}{10}}|Q_{0x_2}u_{x_2}u_{\eta} |[u^2_{\xi}u^2_{k\eta}+ u^2_{\eta}u^2_{k\xi}+u^2_{x_2}u^2_{kx_2}]d\xi d\eta dx_2 +   (1+e_s)^2e^{\frac{5}{2}}_s E_s \\ \nonumber
  &+& (1+e_s)^3e^2_s E_s\\ \nonumber
&\lesssim & (1+e_s) \iiint (2+\xi)^{-\frac{11}{10}}(2+\eta)^{-\frac{1}{10}}(|u_{\xi x_2}u_{\eta}|+|u_{\eta x_2}u_{\xi}| +|u_{x_2 x_2}u_{x_2}|)| u_{x_2}|[u^2_{\xi}u^2_{k\eta}+ u^2_{\eta}u^2_{k\xi}+u^2_{x_2}u^2_{kx_2}]d\xi d\eta dx_2  \\ \nonumber
 &+ & (1+e_s)^2 \iiint (2+\xi)^{-\frac{11}{10}}(2+\eta)^{-\frac{1}{10}}(|u_{\xi x_2}u_{\eta}|+|u_{\eta x_2}u_{\xi}| +|u_{x_2 x_2}u_{x_2}|)|u_{x_2}u_{\eta} |[u^2_{\xi}u^2_{k\eta}+ u^2_{\eta}u^2_{k\xi}+u^2_{x_2}u^2_{kx_2}]d\xi d\eta dx_2 \\ \nonumber
  &+& (1+e_s)^3e^2_s E_s + (1+e_s)^2e^{\frac{5}{2}}_s E_s\\ \nonumber
&\lesssim & (1+e_s) \iiint (2+\xi)^{-\frac{11}{10}}(2+\eta)^{-\frac{1}{10}}(|u_{\xi x_2}u_{\eta}|+|u_{\eta x_2}u_{\xi}| +|u_{x_2 x_2}u_{x_2}|)| u_{x_2}|[u^2_{\xi}u^2_{k\eta}+ u^2_{\eta}u^2_{k\xi}+u^2_{x_2}u^2_{kx_2}]d\xi d\eta dx_2  \\ \nonumber
 &+ & (1+e_s)^2 \iiint (2+\xi)^{-\frac{11}{10}}(2+\eta)^{-\frac{1}{10}}(|u_{\xi x_2}u_{\eta}|+|u_{\eta x_2}u_{\xi}| +|u_{x_2 x_2}u_{x_2}|)|u_{x_2}u_{\eta} |[u^2_{\xi}u^2_{k\eta}+ u^2_{\eta}u^2_{k\xi}+u^2_{x_2}u^2_{kx_2}]d\xi d\eta dx_2 \\ \nonumber
  &+& (1+e_s)^3e^2_s E_s + (1+e_s)^2e^{\frac{5}{2}}_s E_s \\ \nonumber
&\lesssim & (1+e_s)^2 \iiint (2+\xi)^{-\frac{11}{10}}(2+\eta)^{-\frac{1}{10}}[|u_{\eta x_2}|| u_{x_2}||u_{\xi}|^3  + |u_{\eta x_2}||u_{x_2}u_{\eta}||u_{\xi}|^3]u^2_{k\eta}d\xi d\eta dx_2 \\ \nonumber
&+& (1+e_s)^3e^2_s E_s + (1+e_s)^2e^{\frac{5}{2}}_s E_s.
 \end{eqnarray}
 By Proposition 2 , Proposition 3 and Corollary 1, we have
\begin{eqnarray}\nonumber
&&\iiint (2+\xi)^{-\frac{11}{10}}(2+\eta)^{-\frac{1}{10}}[|u_{\eta x_2}|| u_{x_2}| + |u_{\eta x_2}||u_{x_2}u_{\eta}|]|u_{\xi}|^3 u^2_{k\eta}d\xi d\eta dx_2 \\ \nonumber
&\lesssim&\iiint (2+\xi)^{-\frac{27}{20}}(2+\eta)^{-\frac{1}{10}} e^{\frac{5}{2}}_s u^2_{k\eta}d\xi d\eta dx_2 \lesssim  e^{\frac{5}{2}}_s E_s.
 \end{eqnarray}
Then,
\begin{equation}\label{3.60}
\bar{A}_{81}\lesssim (1+e_s)^2 e^{\frac{5}{2}}_s E_s + (1+e_s)^3e^2_s E_s.
\end{equation}

\begin{eqnarray*}
\bar{A}_{82} &=&  \iiint \{4[Pu_{\eta}]_{\xi}  + [Pu_{\xi}]_{\eta}  - 2[Pu_{x_2}]_{x_2}\} Q_0(u_k, u) u_{kx_2} d\xi d\eta dx_2 \\
&\lesssim&  \iiint |[P_{\xi} u_{\eta} + P_{\eta}u_{\xi} + P_{x_2}u_{x_2}]|| Q_0(u_k, u) u_{kx_2} |d\xi d\eta dx_2
+  \iiint |P[8u_{\eta \xi}  - 2u_{x_2x_2}] Q_0(u_k, u) u_{kx_2}| d\xi d\eta dx_2 \\
&\doteq& \bar{A}_{821} + \bar{A}_{822}.
\end{eqnarray*}
Noting Proposition 2,
\begin{eqnarray}\nonumber
\bar{A}_{822} &\lesssim& (1+e_s)^2\iiint (2+\xi)^{-\frac{1}{10}}(2+\eta)^{-\frac{1}{10}} |F'||u_{x_2}|(|u_{\eta\xi}| +|u_{x_2x_2}|)|u_{\eta}||u_{k\xi} u_{kx_2}| d\xi d\eta dx_2 \\ \nonumber
&+& (1+e_s)^2\iiint  (2+\xi)^{-\frac{1}{10}}(2+\eta)^{-\frac{1}{10}}|F'||u_{x_2}|(|u_{\eta \xi}| +|u_{x_2 x_2}|)|u_{\xi}||u_{k\eta}u_{kx_2}| d\xi d\eta dx_2 \\ \nonumber
&+& (1+e_s)^2\iiint (2+\xi)^{-\frac{1}{10}}(2+\eta)^{-\frac{1}{10}} |F'||u_{x_2}|(|u_{\eta \xi}| +|u_{x_2x_2}|)||u_{x_2}|u^2_{kx_2}| d\xi d\eta dx_2\\ \nonumber
&\lesssim& (1+e_s)^2 \iiint (2+\xi)^{-\frac{11}{10}}(2+\eta)^{-\frac{1}{10}} (2+\xi)^{-\frac{3}{4}}(2+\eta)^{-\frac{3}{4}}e^{\frac{3}{2}}_s (|u_{k\xi} u_{kx_2}| + u^2_{kx_2}) d\xi d\eta dx_2 \\ \nonumber
&+& \iiint  (1+e_s)^2(2+\xi)^{-\frac{11}{10}}(2+\eta)^{-\frac{1}{10}} (2+\xi)^{-\frac{1}{2}}(2+\eta)^{-\frac{1}{2}}e_s |u_{\xi}||u_{k\eta}u_{kx_2}| d\xi d\eta dx_2\} \\ \label{3.61}
&\lesssim& (1+e_s)^2e^{\frac{3}{2}}_s E_s + (1+e_s)^2e_s \iiint  (2+\xi)^{-\frac{11}{10}}(2+\eta)^{-\frac{1}{10}} |u_{\xi x_2}||u_{k\eta}u_{kx_2}| d\xi d\eta dx_2\}
\lesssim (1+e_s)^2e^{\frac{3}{2}}_s E_s.
\end{eqnarray}
In the following, we will estimate $\bar{A}_{821}$.
\begin{eqnarray*}
\bar{A}_{821} &\lesssim& \iiint |[(2+\xi)^{-\frac{1}{10}}(2+\eta)^{-\frac{1}{10}}\frac{(1-H)^2 F' u_{x_2}}{1- 2(1-H)F'u_{\eta}}]_{\xi} u_{\eta} Q_0(u_k, u) u_{kx_2}| d\xi d\eta dx_2 \\ \nonumber
&+& \iiint  |[(2+\xi)^{-\frac{1}{10}}(2+\eta)^{-\frac{1}{10}}\frac{(1-H)^2 F' u_{x_2}}{1- 2(1-H)F'u_{\eta}}]_{\eta}u_{\xi} Q_0(u_k, u) u_{kx_2} |d\xi d\eta dx_2 \\ \nonumber
&+& \iiint  (2+\xi)^{-\frac{1}{10}}(2+\eta)^{-\frac{1}{10}}|[\frac{(1-H)^2 F' u_{x_2}}{1- 2(1-H)F'u_{\eta}}]_{x_2}u_{x_2} Q_0(u_k, u) u_{kx_2} |d\xi d\eta dx_2 \\ \nonumber
&\lesssim& (1+e_s)^2\iiint (2+\xi)^{-\frac{11}{10}}(2+\eta)^{-\frac{1}{10}}|u_{x_2}||u_{\eta}| |Q_0(u_k, u)|| u_{kx_2}| d\xi d\eta dx_2 \\ \nonumber
&+& (1+e_s)^3\iiint (2+\xi)^{-\frac{11}{10}}(2+\eta)^{-\frac{1}{10}}[(|H_{\xi}u_{x_2}| + |u_{\xi x_2}|)(1 + |u_{\eta}|)+ |u_{x_2}|(|H_{\xi}u_{\eta}| + |u_{\xi\eta}|)] |u_{\eta}| |Q_0(u_k, u) ||u_{kx_2}| d\xi d\eta dx_2 \\ \nonumber
&+& (1+e_s)^2\iiint  [(2+\xi)^{-\frac{11}{10}}(2+\eta)^{-\frac{11}{10}}|u_{x_2}||u_{\xi}| |Q_0(u_k, u)|| u_{kx_2}| d\xi d\eta dx_2 \\ \nonumber
&+& (1+e_s)^3\iiint  (2+\xi)^{-\frac{11}{10}}(2+\eta)^{-\frac{1}{10}}[(|H_{\eta}u_{x_2}| + |u_{x_2\eta}|)(1+|u_{\eta}|)+ |u_{x_2}||(H_{\eta}u_{\eta}+u_{\eta\eta})|]|u_{\xi}|| Q_0(u_k, u) ||u_{kx_2}| d\xi d\eta dx_2 \\ \nonumber
&+& (1+e_s)^3\iiint  (2+\xi)^{-\frac{11}{10}}(2+\eta)^{-\frac{1}{10}}[(|H_{x_2}u_{x_2}| + |u_{x_2x_2}|)(1+ |u_{\eta}|) + (|H_{x_2}u_{\eta}| + |u_{x_2\eta})u_{x_2}|]|u_{x_2}|| Q_0(u_k, u)|| u_{kx_2}| d\xi d\eta dx_2.
\end{eqnarray*}
Noting Proposition 2, we can estimate the above inequality terms by terms.
\begin{eqnarray}\nonumber
\bar{A}_{821} &\lesssim& (1+e_s)^2e^{\frac{3}{2}}_s E_s + (1+e_s)^2\iiint  [(2+\xi)^{-\frac{21}{10}}(2+\eta)^{-\frac{11}{10}}|u_{x_2}||u_{\xi}||u_{\xi}||u_{k\eta} u_{kx_2}| d\xi d\eta dx_2\\ \nonumber
&+& (1+e_s)^3\iiint (2+\xi)^{-\frac{21}{10}}(2+\eta)^{-\frac{1}{10}}[(|H_{\xi}u_{x_2}| + |u_{\xi x_2}|) |u_{\xi}|| u_{k\eta} u_{kx_2}| d\xi d\eta dx_2 \\ \nonumber
&+& (1+e_s)^3(e^{\frac{3}{2}}_s + e^2_s)E_s + (1+e_s)^3(e_s + e^{\frac{3}{2}}_s + e^2_s)E_s \\ \nonumber
&\lesssim& (1+e_s)^3(e_s + e^{\frac{3}{2}}_s + e^2_s)E_s + (1+e_s)^2\iiint  [(2+\xi)^{-\frac{21}{10}}(2+\eta)^{-\frac{11}{10}} (2+\xi)^{\frac{1}{4}}(2+\eta)^{\frac{1}{4}}e^{\frac{3}{2}}_s |u_{k\eta} u_{kx_2}| d\xi d\eta dx_2\\ \nonumber
&+& (1+e_s)^3\iiint (2+\xi)^{-\frac{11}{10}}(2+\eta)^{-\frac{1}{10}}(|(Q_{0}+4F'(\xi)u_{\eta})_{\xi}u_{x_2}||u_{\xi}| + e_s) |u_{k\eta} u_{kx_2}| d\xi d\eta dx_2 \\ \nonumber
&\lesssim& (1+e_s)^3(e_s + e^{\frac{3}{2}}_s + e^2_s)E_s + (1+e_s)^3\iiint (2+\xi)^{-\frac{11}{10}}(2+\eta)^{-\frac{1}{10}}(e^2_s + e^{\frac{3}{2}}_s) |u_{k\eta} u_{kx_2}| d\xi d\eta dx_2 \\  \label{3.62}
&\lesssim& (1+e_s)^3(e_s + e^{\frac{3}{2}}_s + e^2_s)E_s.
\end{eqnarray}
Therefore, we can get
\begin{eqnarray}\label{3.63}
\bar{A}_8 \lesssim (1+e_s)^3(e_s + e^{\frac{3}{2}}_s + e^2_s)E_s.
\end{eqnarray}
Denote
$$\bar{A}_9 \doteq \iiint [PF'u_{\xi}]_{x_2}u^2_{k\eta} - 8[PF'u_{\xi}]_{\eta}u_{k\eta} u_{kx_2}d\xi d\eta dx_2 = \bar{A}_{91} + \bar{A}_{92}.$$
By Proposition 2, it is easily to get
\begin{eqnarray}\nonumber
\bar{A}_{91} &=& \iiint [PF'u_{\xi}]_{x_2}u^2_{k\eta} d\xi d\eta dx_2 \\ \nonumber
&\lesssim &\iiint (2+\xi)^{-\frac{21}{10}}(2+\eta)^{-\frac{1}{10}} |[\frac{(1-H)^2u_{x_2}u_{\xi}}{1- 2(1-H)F'u_{\eta}}]_{x_2}|u^2_{k\eta}d\xi d\eta dx_2 \\ \nonumber
&\lesssim & (1+e_s)\iiint (2+\xi)^{-\frac{21}{10}}(2+\eta)^{-\frac{1}{10}} |[(H_{x_2}u_{x_2}u_{\xi} + u_{x_2x_2}u_{\xi} + u_{x_2}u_{\xi x_2})(1+u_{\eta})+ u_{x_2}u_{\xi}u_{x_2\eta}]|u^2_{k\eta}d\xi d\eta dx_2 \\ \label{3.64}
&\lesssim & (1+e_s)(e_s + e^{\frac{3}{2}}_s + e^2_s + e^{\frac{5}{2}}_s)E_s.
\end{eqnarray}
and
\begin{eqnarray}\nonumber
\bar{A}_{92} &=& \iiint -8[PF'u_{\xi}]_{\eta}u_{k\eta} u_{kx_2}d\xi d\eta dx_2\\ \nonumber
&\lesssim &\iiint (2+\xi)^{-\frac{21}{10}}(2+\eta)^{-\frac{11}{10}} |\frac{(1-H)^2u_{x_2}u_{\xi}}{1- 2(1-H)F'u_{\eta}}| |u_{kx_2}u_{k\eta}| d\xi d\eta dx_2 \\ \nonumber
&+ &\iiint (2+\xi)^{-\frac{21}{10}}(2+\eta)^{-\frac{1}{10}} |[\frac{(1-H)^2u_{x_2}u_{\xi}}{1- 2(1-H)F'u_{\eta}}]_{\eta}||u_{kx_2}u_{k\eta}| d\xi d\eta dx_2 \\ \nonumber
&\lesssim & (1+e_s)^2\iiint (2+\xi)^{-\frac{21}{10}}(2+\eta)^{-\frac{11}{10}} |u_{x_2}u_{\xi}| |u_{kx_2}u_{k\eta}| d\xi d\eta dx_2 \\ \nonumber
&+ & (1+e_s)^3\iiint (2+\xi)^{-\frac{21}{10}}(2+\eta)^{-\frac{1}{10}} [(|H_{\eta}u_{x_2}u_{\xi}| + |u_{x_2\eta}u_{\xi}| + |u_{x_2}u_{\xi\eta}|)(1+|u_{\eta}|)+ |u_{x_2}u_{\xi}u_{\eta\eta}|]|u_{kx_2}u_{k\eta}|d\xi d\eta dx_2 \\ \label{3.65}
&\lesssim & (1+e_s)^2e_s E_s + (1+e_s)^3(e_s + e^{\frac{3}{2}}_s + e^2_s + e^{\frac{5}{2}}_s)E_s.
\end{eqnarray}
Therefore, we can obtain the estimate
\begin{eqnarray}\label{3.66}
\bar{A}_9 \lesssim  (1+e_s)^2e_s E_s + (1+e_s)^3(e_s + e^{\frac{3}{2}}_s + e^2_s + e^{\frac{5}{2}}_s)E_s.
\end{eqnarray}
Using the similar method to estimate $\bar{A}_9$, we can get
\begin{eqnarray}\nonumber
\bar{A}_{10} &\doteq& \iiint  [PF'u_{\eta}]_{x_2}u^2_{kx_2}d\xi d\eta dx_2 + \iiint [P F'u_{x_2}]_{\eta}u^2_{kx_2}d\xi d\eta dx_2 \\ \label{3.67}
&\lesssim &(1+e_s)^2e_s E_s + (1+e_s)^3(e_s + e^{\frac{3}{2}}_s + e^2_s + e^{\frac{5}{2}}_s)E_s.
\end{eqnarray}
We denote
\begin{eqnarray} \label{3.68}
 \bar{A}_{11} = \iiint (2+\xi)^{-\frac{1}{10}}(2+\eta)^{-\frac{1}{10}} \frac{(1-H)^2u_{x_2}u_{\xi}}{1- 2(1-H)F'u_{\eta}} J_k u_{kx_2}d\xi d\eta dx_2.
\end{eqnarray}
Using the similar method to estimate $\bar{A}_4$, we can get the estimate of $\bar{A}_{11}$.
Similarly, we can get the estimate
\begin{eqnarray}\nonumber
\bar{A}_{12} &=& \iiint (2+\xi)^{-\frac{1}{10}}(2+\eta)^{-\frac{1}{10}}u_{k\xi} (1-H)4\Gamma^k(F'^2u_{\eta\eta}H)d\xi d\eta dx_2 \\ \nonumber
&\lesssim& \iiint (2+\xi)^{-\frac{21}{10}}(2+\eta)^{-\frac{1}{10}}u_{k\xi}u_{k\eta\eta}H d\xi d\eta dx_2 + \iiint (2+\xi)^{-\frac{21}{10}}(2+\eta)^{-\frac{1}{10}}u_{k\xi}u_{k\eta}(\Gamma H) d\xi d\eta dx_2\\ \nonumber
 &+& \iiint (2+\xi)^{-\frac{21}{10}}(2+\eta)^{-\frac{1}{10}}u_{k\xi}u_{\eta\eta}(Q_0(u, u_k) + F'u_{k\eta}) d\xi d\eta dx_2\\ \nonumber
&\lesssim & \varepsilon + (e_s + \tilde{e}_se^{\frac{1}{2}}_s + e^2_s)E_s
\end{eqnarray}
It is easily to get the estimate
\begin{eqnarray}
\bar{A}_{13} &=& - \iiint P[4 \Gamma^k(F'^2 u_{\eta\eta})]u_{kx_2} d\xi d\eta dx_2 \\ \nonumber
&\lesssim & \varepsilon + (e_s^{\frac{1}{2}} + e_s + e_s^{\frac{3}{2}}) E_s
\end{eqnarray}
and
\begin{eqnarray}
\bar{A}_{14} &=&\iiint P[8 F' u_{\eta}\Gamma^k(F'^2 u_{\eta\eta})] u_{kx_2} d\xi d\eta dx_2 \\ \nonumber
&\lesssim & \varepsilon + (\tilde{e}_se_s^{\frac{1}{2}} + e_s + e_s^{\frac{3}{2}})E_s
\end{eqnarray}

\subsection{Low order energy estimates}
In this subsection, we will give the low order energy estimates. Firstly, we can get the following important lower order $L^\infty$ estimate.
\begin{proposition} If we suppose that $e_s \lesssim \varepsilon$, we can get
\begin{eqnarray}\label{3.69}
\tilde{e}_s\lesssim \varepsilon.
\end{eqnarray}
\begin{proof}
Using the foundational solution of system (\ref{3.3}), we have
\begin{eqnarray*}
|\Gamma^k u| &=& |\Gamma^k u_0 + \int_0^t\iint_{R^2} \frac{\Gamma^k [\frac{Q_0(u+F, Q_0(u, u) + 2F'(u_t - u_{x_1}))}{1 - Q_0(u, u) -2 F'(u_t - u_{x_1})}]}{\sqrt{|t-t'|^2 - |x-x'|^2}}d x' dt'|\\
&=& \int_0^t\iint_{R^2} \frac{\Gamma^k [\frac{Q_0(u, Q_0(u, u) + 2F'(u_t - u_{x_1}))}{1 - Q_0(u, u) -2 F'(u_t - u_{x_1})}]}{\sqrt{|t-t'|^2 - |x-x'|^2}}d x' dt' + \int_0^t\iint_{R^2} \frac{\Gamma^k [\frac{Q_0(F, Q_0(u, u))}{1 - Q_0(u, u) -2 F'(u_t - u_{x_1})}]}{\sqrt{|t-t'|^2 - |x-x'|^2}}d x' dt' \\
&+& \int_0^t\iint_{R^2} \frac{|\Gamma^k [\frac{Q_0(F, F'(u_t - u_{x_1}))}{1 - Q_0(u, u) -2 F'(u_t - u_{x_1})}]|}{\sqrt{|t-t'|^2 - |x-x'|^2}}dx' dt'
\end{eqnarray*}
\begin{eqnarray*}
&\lesssim& |\Gamma^k u_0 + \int_0^t\iint_{R^2} \frac{\Gamma^k [\frac{Q_0(u, Q_0(u, u) + 2F'(u_t - u_{x_1}))}{1 - Q_0(u, u) -2 F'(u_t - u_{x_1})}]}{\sqrt{|t-t'| + |x-x'|}}|x-x'|d \sqrt{|t-t'| - |x-x'|} d|t-t'||\\
&+&|\int_0^t\iint_{R^2} \frac{\Gamma^k [\frac{Q_0(F, Q_0(u, u))}{1 - Q_0(u, u) -2 F'(u_t - u_{x_1})}]}{\sqrt{|t-t'| + |x-x'|}}|x-x'|d \sqrt{|t-t'| - |x-x'|} d|t-t'||\\
&+& \int_0^t\iint_{R^2} \frac{|\Gamma^k [\frac{Q_0(F, F'(u_t - u_{x_1}))}{1 - Q_0(u, u) -2 F'(u_t - u_{x_1})}]|}{\sqrt{|t-t'|^2 - |x-x'|^2}}dx' dt'\\
&\lesssim& \varepsilon + \int_0^t\iint_{R^2} |\Gamma^k [\frac{Q_0(u, Q_0(u, u) + 2F'(u_t - u_{x_1}))}{1 - Q_0(u, u) -2 F'(u_t - u_{x_1})}]|d \sqrt{|t-t'| - |x-x'|} d|t-t'|\\
&+&|\int_0^t\iint_{R^2} \frac{\Gamma^k [\frac{Q_0(F, Q_0(u, u))}{1 - Q_0(u, u) -2 F'(u_t - u_{x_1})}]}{\sqrt{|t-t'| + |x-x'|}}|x-x'|d \sqrt{|t-t'| - |x-x'|} d|t-t'||\\
&+& \int_0^t\iint_{R^2} \frac{|\Gamma^k [\frac{Q_0(F, F'(u_t - u_{x_1}))}{1 - Q_0(u, u) -2 F'(u_t - u_{x_1})}]|}{\sqrt{|t-t'|^2 - |x-x'|^2}}dx' dt'.
\end{eqnarray*}
By integrating in part, we have
\begin{eqnarray*}
|\Gamma^k u|&\lesssim& \varepsilon + \int_0^t\iint_{R^2} \partial |\Gamma^k [\frac{Q_0(u, Q_0(u, u) + 2F'(u_t - u_{x_1}))}{1 - Q_0(u, u) -2 F'(u_t - u_{x_1})}]| dx dt \\
&+&\int_0^t\iint_{R^2} \partial|\Gamma^k [\frac{Q_0(F, Q_0(u, u))}{1 - Q_0(u, u) -2 F'(u_t - u_{x_1})}] | dx dt\\
&+& \int_0^t\iint_{R^2} \frac{|\Gamma^k [\frac{Q_0(F, F'(u_t - u_{x_1}))}{1 - Q_0(u, u) -2 F'(u_t - u_{x_1})}]|}{\sqrt{|t-t'|^2 - |x-x'|^2}}dx dt.
\end{eqnarray*}
Then
\begin{eqnarray}\nonumber
|\Gamma^k u| &\lesssim& \varepsilon + |\iiint  \Gamma |\Gamma^k [(1-H)(Q_0(u, Q_0(u, u)) + 6F'Q_0(u,u_{\eta}) + 8F''u^2_{\eta}) + 4F'^2u_{\eta\eta} H| d\xi d\eta dx_2|\\ \nonumber
&+& \iiint   |\frac{\Gamma^k (F'^2u_{\eta\eta})}{\sqrt{(\xi - \xi')(\eta - \eta')-(x_2-x'_2)^2}} | d\xi d\eta dx_2\\ \nonumber
&\lesssim& \varepsilon + \iiint  |\Gamma^{k+1} [(Q_0(u, Q_0(u, u)) + F'Q_0(u,u_{\eta}) + F''u^2_{\eta})+ F'^2u_{\eta\eta} H| d\xi d\eta dx_2\\ \nonumber
&+& \iiint   |\frac{(2+\xi)^{-\frac{7}{4}}(2+\eta)^{-\frac{7}{4}}e_k^\frac{1}{2}+(2+\xi)^{-\frac{5}{4}}(2+\eta)^{-\frac{5}{4}}e_k^\frac{1}{2}}{\sqrt{(\xi - \xi')(\eta - \eta')-(x_2-x'_2)^2}} | d\xi d\eta dx_2\\ \nonumber
&\lesssim& \varepsilon + \iiint  |\Gamma^{k+1}Q_0(u, Q_0(u, u))| + (2+\xi)^{-2}|\Gamma^{k+1}Q_0(u, u_{\eta})| + (2+\xi)^{-3}|\Gamma^{k+1}u^2_{\eta}| d\xi d\eta dx_2\\ \nonumber
&+& \iiint   |\frac{(2+\xi)^{-\frac{5}{4}}(2+\eta)^{-\frac{5}{4}}e_k^\frac{1}{2}}{\sqrt{(\xi - \xi')(\eta - \eta')-(x_2-x'_2)^2}} | d\xi d\eta dx_2\\ \nonumber
&\lesssim& \varepsilon + \iiint  (2+\xi)^{-1}(2+\xi +\eta)^{-1}[|\nabla\Gamma^{k_1}u||\nabla\Gamma^{k_2}u||\Gamma^{k_3}u| + |\Gamma^{k'_1}u||\Gamma^{k'_2}u||\nabla\nabla\Gamma^{k_3}u|] d\xi d\eta dx_2  \\ \nonumber &+&  \iiint (2+\xi)^{-1}(2+\eta)^{-1}(2+\xi)^{\frac{1}{2}}(2+\eta)^{\frac{1}{2}}|\Gamma^{k_1}u_{\xi x_2}||\Gamma^{k_2}u_{\eta}|d\xi d\eta dx_2 \\ \nonumber
&+& \iiint  (2+\xi)^{-2}|\Gamma^{k_1}u_{\eta}||\Gamma^{k_2}u_{\eta}| d\xi d\eta dx_2 \\ \nonumber
&\lesssim& \varepsilon + \iiint  (2+\xi)^{-1}(2+\xi + \eta)^{-1}[|\nabla\Gamma^{k_1}u||\nabla\Gamma^{k_2}u||\Gamma^{k_3}u| + |\Gamma^{k'_1}u||\Gamma^{k'_2}u||\nabla\nabla\Gamma^{k_3}u|] d\xi d\eta dx_2  \\ \nonumber &+&  \iiint (2+\xi)^{-1}(2+\eta)^{-1}(2+\xi)^{\frac{1}{2}}(2+\eta)^{\frac{1}{2}}|\Gamma^{k_1}u_{\xi x_2}||\Gamma^{k_2}u_{\eta}|d\xi d\eta dx_2 +  e_k^\frac{1}{2}\\ \nonumber
&+& \iiint  (2+\xi)^{-2}|\Gamma^{k_1}u_{\eta}||\Gamma^{k_2}u_{\eta}| d\xi d\eta dx_2 \\ \nonumber
&\lesssim& \varepsilon + \int  (2+\xi)^{-1}(2+ \xi +\eta)^{-1}(2+\xi)^{\frac{1}{2}}(2+\eta)^{\frac{1}{2}}(2+\xi)^{\delta}\tilde{e}_k e_k d\xi   +  e_k  + e_k^\frac{1}{2}\\ \nonumber
&\lesssim& \varepsilon + \int (2+\xi)^{\delta - 1}d\xi \tilde{e}_s e_k   +  e_k +e_k^\frac{1}{2}\\ \label{3.70}
&\lesssim&  \varepsilon  + (2+\xi)^{\delta}\tilde{e}_s e_k + e_k + e_k^\frac{1}{2}.
\end{eqnarray}
where we use the bootstrap step in the last two step. Then, we can get the conclusion.
\end{proof}
\end{proposition}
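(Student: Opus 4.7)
The plan is to represent $\Gamma^k u$ via the fundamental solution of the $2{+}1$ dimensional wave operator applied to the equation $(\ref{3.3})$ (after commuting $\Gamma^k$ through), and then estimate the resulting space–time integral using the weighted Sobolev inequalities in Propositions \ref{1}, \ref{2} and Corollary 1, together with the null-form gain from Lemma 3.2. Since the Duhamel kernel $((t-t')^2-|x-x'|^2)^{-1/2}$ is only locally integrable in two spatial dimensions, the first task will be to convert the singular kernel into something integrable. I intend to do this by integrating by parts in the $\sqrt{(t-t')^2-|x-x'|^2}$ variable as in the computation sketched in the excerpt, which produces an extra derivative on the nonlinear source but removes the singularity, reducing the bound for $|\Gamma^k u(\xi,\eta,x_2)|$ to a volume integral of $|\Gamma^{k+1}[\text{nonlinearity}]|$ over $\{-1\le \xi'\le \xi,\ -1\le \eta'\le \eta,\ x_2'\in\mathbb R\}$, plus the initial data contribution $O(\varepsilon)$.

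Next I will expand $\Gamma^{k+1}$ across the right-hand side of $(\ref{3.3})$, using Lemma 2.2 to push $\Gamma^{k+1}$ inside the null form $Q_0$ and inside the coefficient $1-H$. For $k \leq [\frac{s-7}{2}]+1$ the resulting factors have order at most $[\frac{s-7}{2}]+2 \leq s-7$, so every factor is controlled by $e_s^{1/2}$ via Propositions \ref{1}--\ref{2} and Corollary 1. I will group terms according to the three structural pieces: (i) the cubic null term $Q_0(u,Q_0(u,u))$, (ii) the $F'$-coupled term $F'Q_0(u,u_\eta)$ together with $F''u_\eta^2$, and (iii) the linearized traveling-wave piece $F'^2 u_{\eta\eta}H$. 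For (i) and (ii), the null-form estimates $(\ref{2.5})$--$(\ref{2.6})$ give an extra $(2+\xi)^{-1}$ (or $(2+\eta)^{-1}$) factor, which, combined with the decay of $|u|,|u_\eta|,|u_{x_2}|$ at rate $(2+\xi)^{-1/4}(2+\eta)^{-1/4}$ from Proposition \ref{1} and the controlled growth $(2+\xi)^\delta\tilde e_s$ of $u_\xi$-type factors, makes the integrand integrable in $\xi,\eta,x_2$; this is the mechanism by which the excerpt produces the penultimate bound $\int(2+\xi)^{\delta-1}d\xi\,\tilde e_se_k + e_k + e_k^{1/2}$.

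The last step is the bootstrap. Assuming as a continuity hypothesis that $\tilde e_s \leq 2C\varepsilon$, the estimate above becomes $|\Gamma^k u|\lesssim \varepsilon + (2+\xi)^\delta\tilde e_se_s + e_s + e_s^{1/2}$, and since $e_s\lesssim\varepsilon$ by assumption, dividing by $(2+\xi)^\delta$ and taking the supremum over $k$ with $|k|\leq [\frac{s-7}{2}]+1$ yields $\tilde e_s\lesssim \varepsilon + \tilde e_s\varepsilon + \varepsilon^{1/2}$; for $\varepsilon$ small we improve the continuity hypothesis to $\tilde e_s\leq C\varepsilon$, closing the argument.

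The main obstacle I anticipate is the term (iii) involving $F'^2u_{\eta\eta}$: this is linear in $u$ and the $F'^2$ weight only gives $(2+\xi)^{-2}$ decay, so after commuting $\Gamma^{k+1}$ with $F'^2$ one must carefully use hypothesis $H_1)$ together with Corollary 1's $(2+\eta)^{-1/2}$ gain on $u_{\eta\eta}$ to produce an integrable $(2+\xi)^{-5/4}(2+\eta)^{-5/4}$ factor before applying the $L^2$ bound $e_k^{1/2}$. A secondary difficulty is that the integration by parts in the Duhamel formula must respect the finite light-cone support $(\ref{1.19})$ so no boundary contribution at $|x_2| = \sqrt{(2+\xi)(2+\eta)}$ is lost; this should follow directly from $(\ref{1.14})$ and the compact support of $f,g$.
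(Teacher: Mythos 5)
Your proposal follows essentially the same route as the paper's own proof: the Duhamel representation with the $2{+}1$ dimensional kernel $((t-t')^2-|x-x'|^2)^{-1/2}$, integration by parts in the $\sqrt{|t-t'|-|x-x'|}$ variable to trade the singular kernel for one extra $\Gamma$ derivative on the source, the same grouping of the nonlinearity into the cubic null term, the $F'$-coupled terms and the $F'^2u_{\eta\eta}H$ piece (the last handled via $H_1)$, Corollary 1 and the $(2+\xi)^{-5/4}(2+\eta)^{-5/4}e_k^{1/2}$ decay), and the same closing bootstrap in $\tilde e_s$. The only caveat is that the final inequality $\tilde e_s\lesssim\varepsilon+\varepsilon\,\tilde e_s+e_s^{1/2}$, as you state it, literally yields $\tilde e_s\lesssim\varepsilon^{1/2}$ rather than $\varepsilon$; but this looseness is present verbatim in the paper's own estimate (3.70), so it is inherited rather than introduced by your argument.
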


Taking the operator $\Gamma^l$ to (\ref{3.3}), we have
  \begin{eqnarray*}
\Box \Gamma^l u  + 4\Gamma^lF'^2u_{\eta\eta})=\tilde{\Gamma}^l\{\frac{1}{2}(1-H)[Q_0(u, Q_0(u, u))+ 6F' Q_0(u, u_{\eta}) + 8 F'' u_{\eta}^2]- 4 F'^2u_{\eta\eta}H\},
\end{eqnarray*}
where $\tilde{\Gamma}^l = \Gamma^l + \sum\limits_{l'<l}A_{l,l'}\Gamma^{l'}$.
Multiplying $u_{l\eta}e^{-B(\xi)}$ into the above equation and integrating it for the variables $x_2$ and $\eta$, we have
\begin{eqnarray*}
2\frac{d}{d\xi} \iint u^2_{l\eta}e^{-B(\xi)} dx_2d\eta =\iint e^{-B(\xi)} u_{l\eta} \tilde{\Gamma}^l\{\frac{1}{2}(1-H)[Q_0(u, Q_0(u, u))+ 6F' Q_0(u, u_{\eta}) + 8 F'' u_{\eta}^2]- 4 F'^2u_{\eta\eta}H\}dx_2d\eta.
\end{eqnarray*}
Without loss of generality, we assume $|l|\geq 7$. Noting the bound of $B(\xi)$, we can get
\begin{eqnarray*}\nonumber
&&\frac{d}{d\xi} \iint u^2_{l\eta} dx_2d\eta \\ \nonumber
 &\lesssim&  \sum\limits_{|k|\leq s}\iint |u_{k\eta}||\tilde{\Gamma}^{l-7}\{(1-H)[Q_0(u, Q_0(u, u))+ 6F' Q_0(u, u_{\eta}) + 4 F'' u_{\eta}^2]- 4 F'^2u_{\eta\eta}H\}|dx_2d\eta\\
&\lesssim& \sum\limits_{|k|\leq s}\sum\limits_{|l_1|+|l_2|+|l_3|+|l_4|\leq s-14}\iint |u_{k\eta}||([Q_0(\Gamma^{l_2}u, Q_0(\Gamma^{l_3}u, \Gamma^{l_4}u)) + \Gamma^{l_2}F' Q_0(\Gamma^{l_3}u, \Gamma^{l_4}u_{\eta}) ])|dx_2d\eta \\ \nonumber
&+& \sum\limits_{|k|\leq s}\sum\limits_{|l_1|+|l_2|+|l_3|+|l_4|\leq s-14}\iint |u_{k\eta}|(|\Gamma^{l_2}F'' \Gamma^{l_3}u_{\eta}\Gamma^{l_4}u_{\eta}| +| F'^2\Gamma^{l_3}u_{\eta\eta}\Gamma^{l_4}H|\})dx_2d\eta.
\end{eqnarray*}
Then, integrating $\xi$ from the Goursat boundary to $\xi$  and noting Lemma 2.2, we obtain
\begin{eqnarray} \nonumber
 &&\iint u^2_{l\eta} dx_2d\eta \\ \nonumber
 &\lesssim& \varepsilon +  (1+e_s)E^{\frac{1}{2}}_s (\sum\limits_{|l_2|+|l_3|+|l_4|\leq s-14}\iiint(2+\xi)^{\frac{11}{10}}(2+\eta)^{\frac{1}{10}}[Q_0(\Gamma^{l_2}u, Q_0(\Gamma^{l_3}u, \Gamma^{l_4}u))]^2dx_2d\eta d\xi )^{\frac{1}{2}}\\ \nonumber
&+& (1+e_s)E^{\frac{1}{2}}_s (\sum\limits_{|l_1|+|l_2|+|l_3|+|l_4|\leq s-14}\iiint(2+\xi)^{-\frac{9}{10}}(2+\eta)^{\frac{1}{10}}[ Q_0(\Gamma^{l_3}u, \Gamma^{l_4}u_{\eta}) ]^2dx_2d\eta d\xi )^{\frac{1}{2}}\\ \nonumber
&+& (1+e_s)E^{\frac{1}{2}}_s (\sum\limits_{|l_1|+|l_2|+|l_3|+|l_4|\leq s-14}\iiint(2+\xi)^{-\frac{19}{10}}(2+\eta)^{\frac{1}{10}}  (\Gamma^{l_3}u_{\eta}\Gamma^{l_4}u_{\eta})^2dx_2d\eta d\xi )^{\frac{1}{2}}\\ \nonumber
&+& E^{\frac{1}{2}}_s (\sum\limits_{|l_1|+|l_2|+|l_3|+|l_4|\leq s-14}\iiint(2+\xi)^{-\frac{19}{10}}(2+\eta)^{\frac{1}{10}}  (\Gamma^{l_3}u_{\eta\eta}\Gamma^{l_4}H)^2dx_2d\eta d\xi )^{\frac{1}{2}}\\ \nonumber
&\lesssim& \varepsilon +  (1+e_s)E^{\frac{1}{2}}_s (\sum\limits_{|l_2|+|l_3|+|l_4|\leq s-14}\iiint(2+\xi)^{-\frac{9}{10}}(2+\eta)^{\frac{1}{10}}[|\nabla\Gamma^{l_2}u|^2|\Gamma Q_0(\Gamma^{l_3}u, \Gamma^{l_4}u)|^2+ |\Gamma\Gamma^{l_2}u|^2| \nabla Q_0(\Gamma^{l_3}u, \Gamma^{l_4}u)|^2]dx_2d\eta d\xi )^{\frac{1}{2}}\\ \nonumber
&+& (1+e_s)e^{\frac{1}{2}}_sE^{\frac{1}{2}}_s (\sum\limits_{|l_3|+|l_4|\leq s-14}\iiint(2+\xi)^{-\frac{9}{10}}(2+\eta)^{\frac{1}{10}}\{ (2+\eta)^{-2}(\Gamma^{l_3}u)^2_{\xi} + (2+\xi)^{-\frac{1}{2}}(2+\eta)^{-\frac{1}{2}} [(\Gamma^{l_3}u)^2_{\eta} + (\Gamma^{l_3}u)^2_{x_2}]\}dx_2d\eta d\xi )^{\frac{1}{2}}\\ \nonumber
&+& (1+e_s)e^{\frac{1}{2}}_sE^{\frac{1}{2}}_s (\sum\limits_{|l_3|+|l_4|\leq s-14}\iiint(2+\xi)^{-\frac{19}{10}}(2+\eta)^{-\frac{9}{10}}  (\Gamma^{l_4}u_{\eta})^2dx_2d\eta d\xi )^{\frac{1}{2}}\\\label{3.71}
&+& E^{\frac{1}{2}}_s (\sum\limits_{|l_1|+|l_2|+|l_3|+|l_4|\leq s-14}\iiint(2+\xi)^{-\frac{19}{10}}(2+\eta)^{\frac{1}{10}}  (\Gamma^{l_3}u_{\eta\eta}\Gamma^{l_4}(Q_0(u,u) + F''u^2_{\eta}))^2dx_2d\eta d\xi )^{\frac{1}{2}}\\ \nonumber
&\lesssim& \varepsilon +  (1+e_s)E^{\frac{1}{2}}_s (\sum\limits_{|l_2|+|l'_3|+|l'_4|\leq s-13}\iiint(2+\xi)^{-\frac{9}{10}}(2+\eta)^{-\frac{19}{10}}|\nabla\Gamma^{l_2}u|^2|\Gamma^{l'_3}u|^2|\nabla\Gamma^{l'_4}u|^2 dx_2d\eta d\xi)^{\frac{1}{2}}\\ \nonumber
&+& (1+e_s)E^{\frac{1}{2}}_s (\sum\limits_{|l_2|+|l'_3|+|l'_4|\leq s-13}\iiint(2+\xi)^{-\frac{9}{10}}(2+\eta)^{-\frac{19}{10}}|\Gamma^{\tilde{l}_2}u|^2 |\Gamma^{\tilde{l}_3}u|^2|\nabla\Gamma^{\tilde{l}_4}u|^2dx_2d\eta d\xi)^{\frac{1}{2}} + (1+e_s)e^{\frac{1}{2}}_sE_s\\ \nonumber
&+& E^{\frac{1}{2}}_s (\sum\limits_{|l_1|+|l_2|+|l_3|+|l_4|\leq s-14}\iiint(2+\xi)^{-\frac{19}{10}}(2+\eta)^{\frac{1}{10}}  (\Gamma^{l_3}u_{\eta\eta}\Gamma^{l_4}(Q_0(u,u) + F''u^2_{\eta}))^2dx_2d\eta d\xi )^{\frac{1}{2}}\\ \nonumber
\end{eqnarray}
Using the similar procedure to the proof of Proposition 3, we have
\begin{eqnarray} \nonumber
 &&\iint u^2_{l\eta} dx_2d\eta \\ \nonumber
&\lesssim& \varepsilon +  (1+e_s)E^{\frac{1}{2}}_s (\sum\limits_{|l_2|+|l'_3|+|l'_4|\leq s-13}\iiint(2+\xi)^{-\frac{9}{10}}(2+\eta)^{-\frac{19}{10}}|\nabla\Gamma^{l_2}u|^2|\Gamma^{l'_3}u|^2|\nabla\Gamma^{l'_4}u|^2 dx_2d\eta d\xi)^{\frac{1}{2}}\\ \nonumber
&+& (1+e_s)E^{\frac{1}{2}}_s (\sum\limits_{|l_2|+|l'_3|+|l'_4|\leq s-13}\iiint(2+\xi)^{-\frac{9}{10}}(2+\eta)^{-\frac{19}{10}}|\Gamma^{\tilde{l}_2}u|^2 |\Gamma^{\tilde{l}_3}u|^2|\nabla\Gamma^{\tilde{l}_4}u|^2dx_2d\eta d\xi)^{\frac{1}{2}} \\ \nonumber
&+& (1+e_s)e^{\frac{1}{2}}_sE_s +  e^{\frac{1}{2}}_sE_s\\ \nonumber
&\lesssim& \varepsilon +  (1+e_s)E^{\frac{1}{2}}_s e_s(\iiint(2+\xi)^{-\frac{2}{5}}(2+\eta)^{-\frac{7}{5}}|\Gamma^{l'_3}u_{x_2}|^2 dx_2d\eta d\xi)^{\frac{1}{2}}\\ \nonumber
&+& (1+e_s)E^{\frac{1}{2}}_s (\iiint(2+\xi)^{-\frac{9}{10}}(2+\eta)^{-\frac{19}{10}}(2+\xi)^{2\delta}\tilde{e}_s^2 (2+\xi)(2+\eta) |\Gamma^{\tilde{l'}_3}u_{x_2}|^2(2+\xi)^{-\frac{1}{2}}(2+\eta)^{-\frac{1}{2}}e_k^2dx_2d\eta d\xi)^{\frac{1}{2}} \\ \nonumber
&+& (1+e_s)e^{\frac{1}{2}}_sE_s\\ \nonumber
&\lesssim& \varepsilon + (1+e_s)e_sE_s + (1+e_s)e_s \tilde{e}_s E^{\frac{1}{2}}_s (\iiint(2+\xi)^{2\delta-\frac{2}{5}}(2+\eta)^{-\frac{7}{5}} |\Gamma^{\tilde{l'}_3}u_{x_2}|^2dx_2d\eta d\xi)^{\frac{1}{2}}\\ \label{3.72}
&\lesssim & \varepsilon + (1+e_s)e_sE_s + (1+e_s)e_s \tilde{e}_s E_s
\end{eqnarray}
where $\tilde{l'}_3$ is equal to $\tilde{l}_2$ or $\tilde{l}_3$ and we take the positive constant $\delta < \frac{3}{20}$.

Similarly, multiplying $u_{l\xi}e^{-B(\xi)}$ into the above equation and integrating it for the variables $x_2$ and $\eta$, by the bound of $B(\xi)$ we have
\begin{eqnarray}\nonumber
&&\iint u^2_{lx_2}e^{-B(\xi)} dx_2d\eta \\ \nonumber
&\lesssim& \varepsilon +  \iiint u_{l\xi} \tilde{\Gamma}^l\{(1-H)[Q_0(u, Q_0(u, u))+ 4F' Q_0(u, u_{\eta}) + 4 F'' u_{\eta}^2]- 4 F'^2u_{\eta\eta}H\}dx_2d\eta \\ \nonumber
&\lesssim& \varepsilon +  (1+e_s)E^{\frac{1}{2}}_s (\sum\limits_{|l_2|+|l_3|+|l_4|\leq s-14}\iiint(2+\xi)^{\frac{1}{10}}(2+\eta)^{\frac{11}{10}}[Q_0(\Gamma^{l_2}u, Q_0(\Gamma^{l_3}u, \Gamma^{l_4}u))]^2dx_2d\eta d\xi )^{\frac{1}{2}}\\ \nonumber
&+& (1+e_s)E^{\frac{1}{2}}_s (\sum\limits_{|l_1|+|l_2|+|l_3|+|l_4|\leq s-14}\iiint(2+\xi)^{-\frac{19}{10}}(2+\eta)^{\frac{11}{10}}[ Q_0(\Gamma^{l_3}u, \Gamma^{l_4}u_{\eta}) ]^2dx_2d\eta d\xi )^{\frac{1}{2}}\\ \nonumber
&+& (1+e_s)E^{\frac{1}{2}}_s (\sum\limits_{|l_1|+|l_2|+|l_3|+|l_4|\leq s-14}\iiint(2+\xi)^{-\frac{29}{10}}(2+\eta)^{\frac{11}{10}}  (\Gamma^{l_3}u_{\eta}\Gamma^{l_4}u_{\eta})^2dx_2d\eta d\xi )^{\frac{1}{2}}\\ \nonumber
&+& e_sE^{\frac{1}{2}}_s (\sum\limits_{|l_1|+|l_2|+|l_3|+|l_4|\leq s-14}\iiint(2+\xi)^{-\frac{29}{10}}(2+\eta)^{\frac{11}{10}}(\Gamma^{l_3}u_{\eta\eta}\Gamma^{l_4}H)^2  dx_2d\eta d\xi )^{\frac{1}{2}}\\ \nonumber
&\lesssim& \varepsilon +  (1+e_s)E^{\frac{1}{2}}_s (\sum\limits_{|l_2|+|l_3|+|l_4|\leq s-14}\iiint(2+\xi)^{\frac{1}{10}}(2+\eta)^{\frac{11}{10}}[|\nabla\Gamma^{l_2}u|^2|\Gamma Q_0(\Gamma^{l_3}u, \Gamma^{l_4}u)|^2+ |\Gamma\Gamma^{l_2}u|^2| \nabla Q_0(\Gamma^{l_3}u, \Gamma^{l_4}u)|^2]dx_2d\eta d\xi )^{\frac{1}{2}}\\ \nonumber
&+& (1+e_s)e^{\frac{1}{2}}_sE^{\frac{1}{2}}_s (\sum\limits_{|l_3|+|l_4|\leq s-14}\iiint(2+\xi)^{-\frac{9}{10}}(2+\eta)^{\frac{11}{10}}\{ (2+\eta)^{-2}(\Gamma^{l_3}u)^2_{\xi} + (2+\xi)^{-\frac{1}{2}}(2+\eta)^{-\frac{1}{2}} [(\Gamma^{l_3}u)^2_{\eta} + (\Gamma^{l_3}u)^2_{x_2}]\}dx_2d\eta d\xi )^{\frac{1}{2}}\\ \nonumber
&+& (1+e_s)e^{\frac{1}{2}}_sE^{\frac{1}{2}}_s (\sum\limits_{|l_3|+|l_4|\leq s-14}\iiint(2+\xi)^{-\frac{19}{10}}(2+\eta)^{-\frac{9}{10}}  (\Gamma^{l_4}u_{\eta})^2dx_2d\eta d\xi )^{\frac{1}{2}}\\ \label{3.73}
&+& e_sE^{\frac{1}{2}}_s (\sum\limits_{|l_1|+|l_2|+|l_3|+|l_4|\leq s-14}\iiint(2+\xi)^{-\frac{29}{10}}(2+\eta)^{\frac{11}{10}}
(\Gamma^{l_3}u_{\eta\eta}\Gamma^{l_4}(Q_0(u,u) + F''u^2_{\eta}))^2  dx_2d\eta d\xi )^{\frac{1}{2}}.
\end{eqnarray}
Using the similar procedures to $\iint u^2_{l\eta} dx_2d\eta$, we can get
 \begin{eqnarray} \label{3.74}
\iint u^2_{lx_2} dx_2d\eta \lesssim \varepsilon + (1+e_s)e_sE_s + (1+e_s)e_s \tilde{e}_s E_s.
\end{eqnarray}

Therefore, Combining all the above estimates and by the bootstrap method, we can get the energy estimates
\begin{eqnarray} \label{3.75}
E_s \lesssim  \varepsilon, \ \ \ \ \ e_s \lesssim \varepsilon.
\end{eqnarray}
Then, we can get the stability of the traveling wave solution $F(x_1 + t)$ to the time-like extremal hypersurface in Minkowski space $\mathbb{R}^{1+(2+1)}$.

\begin{remark}
Here we will give the difference of the proof of stability result to the general traveling wave solutions $(a + b x_2)F(x_1 + t)$. There is one more term $Q_0(u, F u_{x_2})$ in (\ref{1.12}) than the terms in (\ref{3.2}). In the proof of stability result, the main step is to get the decay of the variables $\xi$ and $\eta$. We can get the following two decay estimates
\begin{eqnarray*}
&&|Q_0(u, F u_{x_2})|\lesssim (2+ \xi)^{-2}[|\Gamma u||\nabla u_{x_2}| + |\nabla u||\Gamma u_{x_2}|]\\
&&|Q_0(u, F u_{x_2})|\lesssim (2+ \xi)^{-1}(2+ \eta)^{-1}[|\Gamma u|(|u_{\xi x_2}|+|u_{x_2 x_2}| + (|u_{x_2}|+ |u_\xi|)|\Gamma u_{x_2}|]
\end{eqnarray*}
There is at least one good derivative in the right hand side. Using the similar procedures, we can get the main stability result for the traveling wave solutions with the general form. We omit the details.
\end{remark}

\section*{Acknowledgement}
This work was done when Jianli liu was visiting Key Laboratory of Mathematics for Nonlinear Sciences (Fudan University), Ministry of Education of China, P.R.China during 2018. He would like to thank the institute for their hospitality.

\section*{References}
\bibliographystyle{elsarticle-num}

\end{document}